\numberwithin{equation}{section}
\newtheorem{prop}{Proposition}
\newtheorem{lemma}[prop]{Lemma}
\newtheorem{thm}[prop]{Theorem}
\newtheorem{cor}[prop]{Corollary}
\numberwithin{prop}{section}
\theoremstyle{definition}
\newtheorem{defn}[prop]{Definition}
\newtheorem{rmk}[prop]{Remark}
\newcommand{\del}{\partial}
\newcommand{\brs}[1]{\left| #1 \right|}
\newcommand{\brk}[1]{\left[ #1 \right]}
\newcommand{\prs}[1]{\left( #1 \right)}
\newcommand{\hsp}{\hspace{0.5cm}}
\newcommand{\N}{\nabla}
\newcommand{\lap}{\Delta}
\renewcommand{\ge}{\epsilon}
\newcommand{\vp}{\varphi}
\newcommand{\AR}{\mathbb{R}}
\newcommand{\Sol}{\operatorname{Sol}}
\DeclareMathOperator{\cB}{B}
\begin{document}

\title{A Monotonicity formula for the extrinsic biharmonic map heat flow}
\author{Elena M\"{a}der-Baumdicker}
\address{Technical University of Darmstadt, Department of Mathematics, Schlossgartenstr.~7, D-64289 Darmstadt, Germany}
\email{maeder-baumdicker@mathematik.tu-darmstadt.de}
\author{Nils Neumann}
\address{Technical University of Darmstadt, Department of Mathematics, Schlossgartenstr.~7, D-64289 Darmstadt, Germany}
\textbf{\email{neumann@mathematik.tu-darmstadt.de}}

\begin{abstract} We explore novel properties of the biharmonic heat kernel on Euclidean space and derive an  entropy type quantity for the extrinsic biharmonic map heat flow which exhibits monotonicity behaviors for $n\leq 4$.
\end{abstract}
\maketitle
\section{Introduction}
Parabolic monotonicity formulas are a fundamental technique for second order geometric evolution equations such as the mean curvature flow \cite{HuiskenMono}, the harmonic map heat flow \cite{StruweMono, ChenStruwe}, the Yang-Mills heat flow \cite{Hamilton, CS1, HongTian2004}, Ricci flow \cite{HamRicci, Perelman} and others. Even for flows with a global constraint such as the volume preserving mean curvature flow, a parabolic monotonicity formula is available \cite{MeinPaper, VPMCF}. In several cases the principle to derive such a monotonicity formula is similar: Consider the solution of a gradient descent flow $u$ for an energy, then weight the energy with the (backwards) heat kernel on the base space (or an appropriate rescaling of the ambient backwards heat kernel in the case of the mean curvature flow) and multiply with the correct power of $(T-t)$ such that the object is invariant under parabolic rescaling. We denote this weighted energy by $\Phi$. Then it is shown that the time derivative of $\Phi$ is non-positive. Another way of expressing this is that the derivative with respect to $R$ is non-negative, where $t= t_0 -R^2$ in the second order case.  For localization purposes a suitable cut-off function $\varphi$ has been introduced in the integrals \cite{ChenStruwe, HongTian2004, CS1} so that a well-controlled error term appears in the monotonicity inequality in the way
\begin{align*}
\Phi(u, R_1, \varphi) \leq C \exp(C(R_2^p- R_1^p))\,\Phi(u, R_2, \varphi) + C(R_2^q-R_1^q)\, E(0),
\end{align*}
for a constant $C>0 $, for some $p,q\in \mathbb N$ and for all $R_1<R_2$. Usually, $E(0)$ is the initial energy. Also the harmonic map heat flow satisfies a similar inequality on general manifolds $M$ \cite{Hamilton}. Such an inequality is also called a \emph{monotonicity formula} although $\Phi$ is not monotone in the classical strong sense.
\\

To the authors' knowledge, no parabolic monotonicity formula has been known so far for fourth-order geometric evolution equations. In this article we present such a monotonicity formula for the scalar biheat equation on $\AR^n$ and for the extrinsic biharmonic map heat flow on $\AR^n$ (both for $n\leq 3$). The latter is a parabolic system of semilinear equations. For the critical dimension $n=4$, we get a slightly weaker inequality. We first recall that there are at least two notions of biharmonic maps. We consider maps  $f:(M^n,g) \to (N,h)\hookrightarrow \AR^k$ between two Riemannian manifolds, where we assume $N$ to be isometrically embedded in $\AR^k$. The \emph{intrinsic biharmonic energy} of such an $f$ is  
\[ E_1(f): = \frac{1}{2} \int_M |\tau(f)|^2 d\mu_g,
\]
where $\tau(f)$ is the \emph{tension field} of $f$, that is, the negative $L^2$-gradient of the intrinsic Dirichlet energy of $f$. The tension field can be computed as $\tau(f) = \Delta f + A(f)(\nabla f,\nabla f)$ when seeing $f$ as a map to $\AR^k$. The leading term $\Delta f$ of $\tau(f)$ is the Laplace-Beltrami operator of $f:M \to N\hookrightarrow\AR^k$ when seeing $f$ as a map into $\AR^k$.
The \emph{extrinsic biharmonic energy} is 
\[
 E_2(f) := \frac{1}{2}  \int_M |\Delta f|^2 d\mu_g,
\]
where $\Delta f $ is the leading term from $\tau(f)$. Both energies come with their own challenges and are of significant interest \cite{Moser2008, Moser2, Moser4}. The associated $L^2$-gradient flows of 
$E_1$  and $E_2$ are known as the \emph{intrinsic and extrinsic biharmonic map heat flow}, respectively. Their behavior remains largely unexplored in general settings. We give an overview of the most importants results that had been obtained so far.
 For the extrinsic case, Lamm \cite{LammExtr} proved short-time existence and, in fact, long-time existence and smooth subconvergence to an extrinsic biharmonic map for the case $n \leq  3$. For the critical dimension 
$n=4$ he also established long-time existence and subconvergence, provided that the initial energy is small \cite{LammExtr}.
  Gastel \cite{Gastelextr} and independently Wang \cite{WangBiheat} showed long-time-existence of weak solutions for the critical dimension $n = 4$ which are smooth except for finitely many isolated times. Huang etal.\ \cite{extrBiharmBrdy} proved a similar result for the boundary case in $n=4$. Liu and Yin presented examples demonstrating the occurrence of finite-time singularities in the critical dimension $n=4$ \cite{LiuYin}. A similar result was obtained by Cooper in \cite{Cooper}. Well-posedness for small times for rough initial data for the extrinsic and intrinsic case was proven by Wang \cite{WangBiheat2}. \\
This leads us to the intrinsic biharmonic map heat flow, the flow corresponding to $E_1$. Short-time existence follows from the work of Lamm  \cite{LammExtr}, and he also proved in \cite{Lamm2005} that if the sectional curvature of the ambient space 
$N$ is non-positive, then the flow does not develop singularities if $n\leq 4$, and it subconverges smoothly to a harmonic map. Moser started the analysis of possible singularities of the intrinsic flow \cite{Moser3}. Aside from these results in specific cases, our understanding of the intrinsic flow remains quite limited. \\

Following the proofs of the monotonicity formula for the harmonic map heat flow \cite{StruweMono, Hamilton} there are two equivalent ways to present the computations. Either one defines the weighted energy over the base manifold $M$ and takes the time-derivative as it was presented in \cite{Hamilton}. Or one works on slices or horizontal layers which was done in \cite{StruweMono}. We decided to follow the latter approach. For that, we consider a solution of the biharmonic map heat flow $u:\AR^n\times[0,T)\to N\hookrightarrow\AR^k$, $\partial_t u= -\Delta^2 u - f(u)$, where $f(u)$ is a nonlinear term we recall in Section~\ref{sect:variations} and let $(x_0,t_0)\in \AR^n\times(0,T)$. Then we define for $0<R<\frac{(t_0)^{\frac{1}{4}}}{2}$
\[
T_R(t_0):= \AR^n \times \{t: t_0 - 16 R^4 < t < t_0-R^4\}.
\]
We weight the energy $E_2$ with the kernel of the (backwards) biheat equation $\partial_t B = \Delta^2 B$ on $\AR^n$ concentrating at $(x_0,t_0)$,  which is \cite{AGG, GG, GMO}
\begin{align*}
 B(x,t):= B_{(x_0,t_0)}(x,t) &= \frac{\alpha_n}{(t_0-t)^{n/4}} f_n\left(\frac{|x-x_0|}{(t_0-t)^{1/4}}\right), \\
 f_n(\eta) &= \eta^{1-n} \int_0^\infty \exp\left(-s^4\right) (\eta s)^{n/2} \operatorname{J}_{\frac{n-2}{2}} (\eta s) \,ds,
\end{align*}
where $ \operatorname{J}_\nu$ denotes the $\nu$-th Besselfunction of first kind and $\alpha_n>0$ is a normalization constant.
A very important property of the kernel will be proven in Section~\ref{sect:propbiheat}, see Lemma~\ref{lem:propbiheat}:
\begin{lemma}
    The backwards biharmonic heat kernel $B$ on $\AR^n$ satisfies 
    \[\nabla\Delta B(x,t) = \frac{x-x_0}{4(t_0-t)}B(x,t) \text{ on } \AR^n\times (-\infty, t_0).
    \]
\end{lemma}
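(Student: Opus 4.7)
The identity says that the symbol of $\nabla\Delta$ acting on $B$ reduces to multiplication by $(x-x_0)/(4\tau)$, where $\tau:=t_0-t$. In Fourier space $\nabla\Delta$ becomes the symbol $-i|\xi|^{2}\xi$, and up to a factor of $-4\tau$ this is precisely the $\xi$-gradient of $e^{-|\xi|^{4}\tau}$. This observation is the entire heart of the argument and suggests that we prove the lemma by writing $B$ as an inverse Fourier transform and performing a single integration by parts.

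The first step is therefore to convert the Bessel-integral definition of $B$ into a Fourier representation. Applying the standard Hankel-transform identity
\[
\int_{\AR^{n}} G(|\xi|)\,e^{i\xi\cdot y}\,d\xi \;=\; (2\pi)^{n/2}|y|^{1-n/2}\int_{0}^{\infty} G(s)\,J_{(n-2)/2}(s|y|)\,s^{n/2}\,ds
\]
with $G(s)=e^{-s^{4}\tau}$ and $y=x-x_{0}$, followed by the rescaling $s\mapsto s/\tau^{1/4}$, matches the given formula for $B$ term by term and yields
\[
B(x,t)\;=\;\widetilde{\alpha}_{n}\int_{\AR^{n}} e^{\,i\xi\cdot(x-x_{0})-|\xi|^{4}\tau}\,d\xi
\]
for a positive constant $\widetilde{\alpha}_{n}$ depending only on $n$ and $\alpha_{n}$. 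This is the main bookkeeping step.

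With this representation in hand, the super-Gaussian decay of $e^{-|\xi|^{4}\tau}$ for $\tau>0$ justifies differentiating under the integral to get
\[
(\nabla\Delta B)_{j}(x,t)\;=\;\widetilde{\alpha}_{n}\int_{\AR^{n}}\bigl(-i|\xi|^{2}\xi_{j}\bigr)\,e^{\,i\xi\cdot(x-x_{0})-|\xi|^{4}\tau}\,d\xi.
\]
The algebraic identity $\partial_{\xi_{j}}\!\bigl(e^{-|\xi|^{4}\tau}\bigr)=-4\tau\,|\xi|^{2}\xi_{j}\,e^{-|\xi|^{4}\tau}$ rewrites the integrand as $\tfrac{i}{4\tau}\,\partial_{\xi_{j}}\!\bigl(e^{-|\xi|^{4}\tau}\bigr)\cdot e^{i\xi\cdot(x-x_{0})}$. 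One integration by parts in $\xi_{j}$ (boundary terms vanish by the same decay) transfers the derivative onto the plane wave, producing a factor $i(x-x_{0})_{j}$, and collecting constants gives
\[
(\nabla\Delta B)_{j} \;=\; \frac{(x-x_{0})_{j}}{4\tau}\,\widetilde{\alpha}_{n}\int_{\AR^{n}} e^{\,i\xi\cdot(x-x_{0})-|\xi|^{4}\tau}\,d\xi \;=\; \frac{(x-x_{0})_{j}}{4\tau}\,B(x,t),
\]
which is the claim.

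The only potentially tricky point is the translation between the Bessel-integral definition of $B$ and its Fourier-integral form; once that is established, the lemma follows from a single integration by parts driven by the symbol identity above, with no further estimates needed since we stay on the open set $t<t_{0}$ where $\tau>0$. An equivalent but more computational route is to substitute $B=\alpha_{n}\tau^{-n/4}f_{n}(r/\tau^{1/4})$ directly into the identity, reducing it to the ODE $f_{n}'''(\eta)+\tfrac{n-1}{\eta}f_{n}''(\eta)-\tfrac{n-1}{\eta^{2}}f_{n}'(\eta)=\tfrac{\eta}{4}f_{n}(\eta)$ for the radial profile, which can be verified from its Bessel integral using the recurrence $\tfrac{d}{dz}\bigl(z^{-\nu}J_{\nu}(z)\bigr)=-z^{-\nu}J_{\nu+1}(z)$ and an integration by parts in $s$.
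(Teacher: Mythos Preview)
Your proof is correct and takes a genuinely different route from the paper. The paper works entirely in physical space: it writes $b(x,t)=\alpha_n t^{-n/4}f_n(\eta)$ with $\eta=|x|/t^{1/4}$, observes that $\Delta_x f_n(\eta)=t^{-1/2}\Delta^\eta f_n(\eta)$ for the radial operator $\Delta^\eta f=f''+\tfrac{n-1}{\eta}f'$, and then quotes the identity $(\Delta^\eta f_n)'=\tfrac{\eta}{4}f_n$ from \cite{AGG} to conclude. This is exactly the ODE you wrote down at the end of your proposal, so your ``alternative computational route'' is essentially the paper's proof, except that the paper does not re-derive the ODE from the Bessel integral but cites it.

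Your main argument via the Fourier representation is more conceptual: once one recognizes $B$ as the inverse Fourier transform of $e^{-|\xi|^4\tau}$, the identity follows from the single observation that $-i|\xi|^2\xi_j$ is proportional to $\partial_{\xi_j}e^{-|\xi|^4\tau}$, and one integration by parts finishes. This makes the structural reason for the identity transparent (it is the frequency-side analogue of $\nabla k=-\tfrac{x}{2t}k$ for the ordinary heat kernel) and is self-contained, requiring no external ODE fact. The paper's route, by contrast, stays closer to the Bessel-integral definition used throughout and feeds directly into the later lemmas on the sign and zeros of $\Delta^\eta f_n$, so each approach has its place.
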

This equation has an interesting consequence, namely that the biheat equation on $\AR^n$ satisfies a differential Matrix-Harnack type equation, see Corollary~\ref{cor:diffHarnack}. This may be of independent interest
\begin{lemma}
Let $b(x,t) =  \frac{\alpha_n}{t^{n/4}} f_n\left(\frac{|x|}{t^{1/4}}\right)$ with $\alpha_n$ and $f_n$ as above be the (forward) biheat equation concentrating at $(x_0,t_0)=(0,0)$, then $b$ satisfies
\[
b\cdot \nabla_i\nabla_j \Delta b - \nabla_i\Delta b\,\nabla_j b - \frac{\delta_{ij} b^2}{4t} = 0 \qquad \text{ on }\AR^n\times (0,\infty).
\]
\end{lemma}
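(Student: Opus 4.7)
The plan is to derive the identity as a direct consequence of the previous Lemma combined with the radial symmetry of $b$. First I would transfer the statement from the backwards kernel $B$ to the forward kernel $b$: with $(x_0,t_0)=(0,0)$ one has $b(x,t)=B(x,-t)$ for $t>0$, so applying the Lemma (and taking care of the sign under time reversal) gives
\[
\nabla_j\Delta b(x,t)\;=\;\frac{x_j}{4t}\,b(x,t)\qquad\text{on }\AR^n\times(0,\infty).
\]

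Next I would differentiate this identity once more in $x_i$, which by the product rule produces
\[
\nabla_i\nabla_j\Delta b \;=\; \frac{\delta_{ij}}{4t}\,b \;+\; \frac{x_j}{4t}\,\nabla_i b.
\]
Multiplying by $b$ and then subtracting $\nabla_i\Delta b\,\nabla_j b=\frac{x_i}{4t}\,b\,\nabla_j b$ as well as the last term $\frac{\delta_{ij}b^2}{4t}$, the two isotropic pieces cancel and the combination of interest collapses to
\[
b\cdot\nabla_i\nabla_j\Delta b-\nabla_i\Delta b\,\nabla_j b-\frac{\delta_{ij}b^2}{4t}\;=\;\frac{b}{4t}\,\bigl(x_j\nabla_i b-x_i\nabla_j b\bigr).
\]

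To close the argument I would invoke radial symmetry: since $b(x,t)=\alpha_n t^{-n/4}f_n(|x|/t^{1/4})$ depends on $x$ only through $|x|$, the spatial gradient is pointwise parallel to the position vector, i.e.\ $\nabla_i b$ is a scalar multiple of $x_i$. Therefore $x_j\nabla_i b-x_i\nabla_j b\equiv 0$ on $\AR^n\times(0,\infty)$, and the identity follows. I do not expect any real obstacle here; the only subtlety is the bookkeeping of signs in the time-reversal step. Conceptually the point is that the scalar identity of the previous Lemma, which involves only the single gradient $\nabla\Delta B$, already encodes enough information to upgrade to the full symmetric Hessian identity, the asymmetric remainder being killed for free by the radial structure of the biharmonic heat kernel.
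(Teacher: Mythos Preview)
Your proof is correct and follows essentially the same approach as the paper: differentiate the identity $\nabla\Delta b=\tfrac{x}{4t}b$ once more and rearrange. The only cosmetic difference is that the paper differentiates $\nabla_i\Delta b=\tfrac{x_i}{4t}b$ in the $j$-direction (so the term $\tfrac{x_i}{4t}b\,\nabla_j b=\nabla_i\Delta b\,\nabla_j b$ appears directly and the identity drops out in one line), whereas you differentiate $\nabla_j\Delta b$ in the $i$-direction and then need the radial-symmetry observation $x_j\nabla_i b=x_i\nabla_j b$ to close; this extra step is of course valid. Note also that Lemma~\ref{lem:propbiheat} is already stated for the forward kernel $b$, so no time-reversal bookkeeping is needed.
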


To address the oscillatory behavior of the biharmonic heat kernel we need to include a cut-off function $\varphi\in C^\infty_c(\AR^n)$ into our computations in analogon to other localized cases \cite{ChenStruwe, CS1}. We define 
\[
\Psi(u,R, \varphi): = \frac{1}{2}\int_{T_R(t_0)} |\lap u|^2 \, B_{(x_0,t_0)} \varphi\, dxdt - \frac{1}{2}\int_{T_R(t_0)}|\nabla u|^2 \,\Delta B_{(x_0,t_0)} \varphi\, dx dt.
\]
Our key computation on $\Psi$ (w.l.o.g.\ $(x_0,t_0)=(0,0)$) shows the following:
\begin{align*}
\frac{\partial}{\partial R} \Psi(u,R, 1) =  \frac{1}{4R} \int_{T_R} \frac{|\nabla u\cdot x + 4 t \partial_t u|^2}{|t|} B \, dxdt + \frac{2}{R} \int_{T_R} [\nabla_i (\Delta u\nabla_i B)]\cdot  (\nabla u\cdot x + 4 t \partial_t u) dxdt.
\end{align*}
This implies particularly that solutions of the biharmonic map heat flow satisfying $\nabla u\cdot x + 4 t \partial_t u=0$ for $t< 0$ are critical points of $\Psi$ with respect to $R$. Solutions (for $t<0$) satisfying this equation are called \emph{shrinking solitons} or \emph{self-shrinker}. Based on these computations and on properties of the biharmonic heat kernel the main results of this article are the following:

\begin{thm}
Let $u:\AR^n\times[0,T)\to N\hookrightarrow \AR^k$ be a smooth solution of the extrinsic biharmonic map heat flow. Then the quantity $\Psi(u,R,\varphi)$ has the following properties:
\begin{enumerate}
    \item With the notation $u_R(x,t): = u(Rx, R^4t)$ and analogously for $\varphi_R$, the quantity $\Psi (u, R, \varphi)$ is invariant under parabolic rescaling; that is,
\begin{align*}
\Psi(u,R,\varphi) = \Psi(u_R, 1, \varphi_R).
\end{align*}
\item Furthermore, shrinking solitons, i.e.\ solutions $u$ of the biharmonic map heat flow satisfying $\frac{d}{dR}u_R\big|_{R=1}=0$, or equivalently $\nabla u\cdot (x-x_0) + 4 (t- t_0)\partial_t u=0$ for $t<t_0$, are critical points of $\Psi$. That is, $\frac{d}{dR}\Psi(\hat u, R,1)=0$ for shrinking solitons $\hat u$.
\item 
For any $0< R_0< \min\{t_0^{1/4}/2,1\}$ there is a smooth cut-off function $\varphi \in C^\infty_c (\AR^n)$ depending on $R_0$ and $f_n$ and a constant $C=C(f_n)$ such that, for $0< R_0\leq R\leq  \min\{t_0^{1/4}/2,1\}$, we have that both terms in the definition of $\Psi$ are non-negative and
      \begin{align*} 
\tfrac{\del}{\del R} \brk{\Psi \prs{u,R,\vp}} 
& \geq \tfrac{1}{2}\tfrac{1}{ R} \int_{T_R(t_0)} \tfrac{ \brs{\nabla u \cdot (x-x_0) + 4 (t-t_0) \partial_t u}^2}{4 \brs{t_0-t}} B \vp \,dx\,dt  - C (R^4 + R^{5-n} ) \sup_{t\in[0, t_0-R_0^4]}\int_{[\varphi>0]}|\N\lap u|^2 \,dx\\
&\hsp - C \sup_{t\in[0, t_0-R_0^4]} \int_{[\varphi>0]}|\N u|^2 (\cdot, t) \,dx  - C ( R^4 + R^{3-n} ) \int_{\AR^n}|\lap u_0|^2 \,dx 
\end{align*}
\end{enumerate}
\end{thm}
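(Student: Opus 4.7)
For part (i), the approach is a direct change of variables $(x,t) = (Ry, R^4 s)$, which takes $T_R(0,0)$ to $T_1(0,0)$. Using that $B$ centered at the origin is homogeneous of degree $-n$ (hence $\Delta B$ is of degree $-n-2$), and that $|\Delta u|^2$, $|\nabla u|^2$ pick up factors $R^{-4}$, $R^{-2}$ under $u\mapsto u_R$, the Jacobian $R^{n+4}$ balances both integrands. Part (ii) is immediate from the formula for $\partial_R\Psi(u,R,1)$ recalled above the theorem: both volume terms contain the factor $\nabla u\cdot(x-x_0) + 4(t-t_0)\partial_t u$, which vanishes identically for shrinking solitons.

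For part (iii) the plan, after translating so that $(x_0,t_0)=(0,0)$, is to repeat the derivation of the $\varphi\equiv 1$ identity while carrying $\varphi$ throughout. Differentiating the rescaling identity $\Psi(u,R,\varphi) = \Psi(u_R,1,\varphi_R)$ in $R$ and using the infinitesimal scaling field $\partial_R u_R\big|_{R=1} = \nabla u\cdot x + 4t\,\partial_t u$ produces the quadratic term $\frac{1}{4R}\int_{T_R}|t|^{-1}|\nabla u\cdot x + 4t\,\partial_t u|^2 B\varphi\,dx\,dt$. The remaining cross term $\frac{2}{R}\int_{T_R}\nabla_i(\Delta u\,\nabla_i B)\cdot(\nabla u\cdot x + 4t\,\partial_t u)\varphi\,dx\,dt$ is handled by integration by parts, combined with the flow equation $\partial_t u = -\Delta^2 u - f(u)$ and the identity $\nabla\Delta B = \tfrac{x}{-4t}B$ from Lemma~\ref{lem:propbiheat}; orthogonality of $f(u)$ to $T_u N$ kills its pairing with the tangential quantity $\nabla u\cdot x$. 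All additional contributions coming from derivatives of $\varphi$ are collected into error terms.

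The cut-off $\varphi$ is chosen to be identically $1$ on the largest ball on which $B(\cdot,t)\geq 0$ and $\Delta B(\cdot,t)\leq 0$ hold uniformly for $t\in[-16,-R_0^4]$ (its radius is controlled by the first zeros of $f_n$ and of $\Delta f_n$), and to vanish smoothly just outside, thereby ensuring non-negativity of both summands of $\Psi$. Error terms are supported on a fixed annulus and estimated via the polynomial decay of $f_n$ and its derivatives: contributions from a volume integral of $B$ on $\operatorname{supp}\nabla\varphi$ yield a factor $R^4$, while contributions involving the $|t|^{-1}$ weight from Lemma~\ref{lem:propbiheat} scale as $R^{5-n}$ or $R^{3-n}$ after integration over $t\in[-16R^4,-R_0^4]$. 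Combined with the energy identity $\int|\Delta u(\cdot,t)|^2\,dx \leq \int|\Delta u_0|^2\,dx$ (which follows from $\tfrac{d}{dt}E_2 = -\int|\partial_t u|^2$ together with the orthogonality) and $L^2$-control of $|\nabla u|^2$, $|\nabla\Delta u|^2$ on time slices inside $\{\varphi>0\}$, this yields the stated inequality. The main obstacle is the bookkeeping in the integration-by-parts step: one must isolate the perfect square $|\nabla u\cdot x + 4t\,\partial_t u|^2$ cleanly, which requires delicate use of $f(u)\perp T_u N$, and the cut-off must simultaneously preserve both signs in $\Psi$ and have its derivatives supported where the oscillatory kernel decays fast enough to absorb the $|t|^{-1}$ singularity uniformly on the slab.
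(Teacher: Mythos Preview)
Your proposal follows the paper's strategy, but the mechanism you describe for the cross term in part (iii) is garbled. You say it is ``handled by integration by parts, combined with the flow equation and the identity $\nabla\Delta B=\tfrac{x}{-4t}B$''; in fact those ingredients are exactly what \emph{produce} the quadratic term $K_1=\tfrac{1}{4R}\int_{T_R}|t|^{-1}|\Sol u|^2\, B\varphi$ in the first place (they merge the $\langle\Delta^2 u,\Sol u\rangle B\varphi$ and $\langle\nabla u\cdot\nabla\Delta B,\Sol u\rangle\varphi$ contributions arising after differentiation and integration by parts, with $f(u)\perp T_uN$ discarding the nonlinearity), and they do nothing further for the leftover cross term. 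That term, which expands as $(\nabla\Delta u\cdot\nabla B+\Delta u\,\Delta B)\cdot\Sol u\,\varphi$, is instead absorbed via Young's inequality: each of the remaining integrals (the paper's (\ref{equ:K2})--(\ref{equ:K9})) is bounded by $\epsilon K_1$ plus a remainder, and summing with $\epsilon=\tfrac{1}{16}$ leaves the coefficient $\tfrac12$ in front of $K_1$ in the statement. The two remainders \emph{not} carrying a derivative of $\varphi$ are the delicate ones; they require the pointwise growth bounds $\tfrac{|\nabla B|^2|t|}{|B|}\,\varphi\leq C(1+R^{1-n})$ and $\tfrac{|\Delta B|^2|t|}{|B|}\,\varphi\leq C(1+R^{-1-n})$ on $T_R$, and it is these---multiplied by the time-length $\sim R^4$ of $T_R$---that produce the powers $R^{5-n}$ and $R^{3-n}$ (not, as you write, integration of a $|t|^{-1}$ weight over the slab).

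Two smaller points. First, the decay of $f_n$ and its derivatives is \emph{exponential}, not polynomial, and this distinction matters: polynomial decay would not control expressions like $\eta^2 R\,|B|$ on $\operatorname{supp}\nabla\varphi$ uniformly as $R\to 0$, since there $\eta\simeq |x|/R\to\infty$ while $|x|$ stays bounded away from zero. Second, the time slab $T_R$ is $\{t_0-16R^4<t<t_0-R^4\}$, not $\{t_0-16R^4<t<t_0-R_0^4\}$.
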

The above results work for all dimensions, but it is favorable to bound the terms that are still depending on $t$ in the above inequality.
Using work of Lamm \cite{LammExtr} we do this for $2\leq n \leq 3$.

\begin{cor}
Let $N$ either be a closed Riemannian manifold or $N=\AR^k$ and
let $u:\AR^n\times[0,T)\to N\hookrightarrow \AR^k$, $2\leq n\leq 3$, be a smooth solution of the biharmonic map heat flow.
Then, for any $(x_0,t_0)\in\AR^n \times (0,T)$ and $0< R_0< \min\{t_0^{1/4}/2,1\}$ there is a smooth cut-off function $\varphi \in C^\infty_c (\AR^n)$ depending on $R_0$ and $f_n$ and a constant $C$ depending on $f_n$ and $N$, such that for $R_0 \leq R_1\leq R_2 \leq  \min\{t_0^{1/4}/2,1\}$, we have
       \begin{align*}
   &\Psi(u, R_1, \varphi) \leq \Psi(u, R_2, \varphi) + C (R_2- R_1)\tilde E_0 + C(R_2-R_1) \int_{\AR^n}|\Delta u_0|^2, 
   \end{align*}
where $\tilde E_0$ depends on $u_0,n$ and $N$.
\end{cor}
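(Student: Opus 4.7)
The plan is to obtain the corollary by integrating the differential inequality of part \textit{(iii)} of the Theorem from $R_1$ to $R_2$, after discarding the nonnegative shrinker term on the right-hand side and bounding the remaining $t$-dependent quantities by constants that depend only on the initial data $u_0$, the dimension $n$, and the target $N$. Dropping the shrinker term leaves
\begin{align*}
\tfrac{\partial}{\partial R}\Psi(u,R,\vp)
&\geq - C(R^4 + R^{5-n})\sup_{t\in[0,t_0-R_0^4]}\int_{[\vp>0]}|\N\lap u|^2 \,dx \\
&\quad - C\sup_{t\in[0,t_0-R_0^4]}\int_{[\vp>0]}|\N u|^2 \,dx - C(R^4+R^{3-n})\int_{\AR^n}|\lap u_0|^2 \,dx.
\end{align*}

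The key analytic input is then a uniform-in-time a priori bound for both $\|\N\lap u(\cdot,t)\|_{L^2}$ and $\|\N u(\cdot,t)\|_{L^2}$ depending only on $u_0$, $n$, and $N$. Such a bound is supplied by Lamm's long-time existence result \cite{LammExtr}: for $2\leq n\leq 3$ the extrinsic biharmonic map heat flow remains smooth for all times, and the higher-order energy estimates propagate to give
\[
\sup_{t\in[0,T)}\bigl(\|\N\lap u(\cdot,t)\|_{L^2(\AR^n)}^2 + \|\N u(\cdot,t)\|_{L^2(\AR^n)}^2\bigr) \leq \til E_0(u_0,n,N).
\]
For a closed target $N$, the pointwise constraint $u(x,t)\in N$ automatically keeps $u$ bounded in $L^\infty$; for $N=\AR^k$ the nonlinearity $f(u)$ vanishes, the flow reduces to the scalar biheat equation, and the same bound follows directly from parabolic smoothing applied to $u_0$. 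Since $\{\vp>0\}\subset\AR^n$, both $\sup_t$-quantities above are controlled by $\til E_0$.

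Because $2\leq n\leq 3$ and $R_0\leq R\leq \min\{t_0^{1/4}/2,1\}\leq 1$, the exponents $5-n,\ 3-n\geq 0$, so $R^4,\ R^{5-n},\ R^{3-n}\leq 1$; hence the differential inequality simplifies to
\[
\tfrac{\partial}{\partial R}\Psi(u,R,\vp) \geq -C\,\til E_0 - C\int_{\AR^n}|\lap u_0|^2\, dx,
\]
with $C=C(f_n,N)$. Integrating in $R$ from $R_1$ to $R_2$ and rearranging yields exactly the stated inequality.

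The main obstacle is not the elementary integration step but verifying the uniform a priori bound used above; this is the reason the corollary is restricted to the subcritical dimensions $n\leq 3$. In the critical dimension $n=4$ finite-time singularities are known to occur \cite{LiuYin,Cooper}, so a global bound on $\|\N\lap u\|_{L^2}$ of the form $\til E_0(u_0,n,N)$ cannot be expected. Making the dependence of $\til E_0$ on $u_0$ and $N$ precise (in particular tracking how the constants of Lamm's higher-order estimates scale) is a bookkeeping task, but no new ideas beyond \cite{LammExtr} are required.
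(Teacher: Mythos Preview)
Your outline is correct: drop the nonnegative shrinker term, bound the two $\sup_t$ quantities by a constant $\tilde E_0$ depending only on $u_0$, $n$, $N$, use $R\leq 1$ and $n\leq 3$ to replace all powers of $R$ by $1$, and integrate from $R_1$ to $R_2$. The difference with the paper lies in how the a priori bound is obtained.

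You claim a \emph{global} bound $\sup_t\|\nabla\Delta u(\cdot,t)\|_{L^2(\AR^n)}^2 \leq \tilde E_0$ as a consequence of Lamm's long-time existence. This is not what \cite{LammExtr} directly provides on a noncompact base: Lamm's estimates are either local (interior $C^k$ bounds on compact subsets, via Schauder theory and iteration) or global under additional integrability hypotheses such as $\int_{\AR^n}|\nabla^l u|^2<\infty$ and $\sup_t\int_{\AR^n}|u|^2<\infty$, none of which are stated in the corollary. For closed $N$ the map $u$ is bounded in $L^\infty$, but $\int_{\AR^n}|u|^2$ and $\int_{\AR^n}|\nabla^l u|^2$ need not be finite.

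The paper instead proves a \emph{local} bound (Proposition~\ref{prop:absch}): since $\operatorname{spt}\varphi$ lies in a fixed ball $B_{\hat\rho}(x_0)$, compactness of $N$ gives $\sup_t\int_{[\varphi>0]}|u|^2\leq c(N,n)$ for free, and then Lamm's local $C^k$ theory \cite[Theorem~3.8]{LammExtr} is iterated over finitely many time subintervals of $[0,T]$ to obtain $\sup_t\int_{[\varphi>0]}|\nabla\Delta u|^2\leq C(u_0,n,N)$, with dependence only on local $C^k$ norms of $u_0$. This is exactly what the differential inequality of Theorem~\ref{mainthm-generalN} asks for, and it avoids any global integrability hypothesis. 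For $N=\AR^k$ the paper treats the linear biheat equation separately via explicit energy identities (Lemma~\ref{lem:flowprop}, Corollary~\ref{cor:mainresult}), under the assumption $\int|\nabla^l u|^2<\infty$.

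So your integration step is fine, but the sentence ``such a bound is supplied by Lamm's long-time existence result'' hides the real work: the bound you need is local, not global, and obtaining it uses the compact support of $\varphi$ together with Lamm's local regularity theory rather than a global energy propagation.
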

In fact, for the case $N=\AR^k$, the system decouples so that $u$ solves the scalar linear biheat equation on $\AR^n\times[0,\infty)$. In that case, for $n\leq 3$, we show (with a slightly different proof compared to the case where $N\not =\AR^k$)
\begin{align*}
   &\Psi(u, R_1, \varphi) \leq \Psi(u, R_2, \varphi) + C (R_2- R_1) \left(\int_{\AR^n}|\nabla u_0|^2 \,dx + \int_{\AR^n}|\Delta u_0|^2 \,dx + \int_{\AR^n}|\nabla\Delta u_0|^2 \,dx\right)
   \end{align*}
   if the initial datum satisfies $ \int_{\AR^n} |\nabla^l u (x, t)|^2 dx<\infty$ for each $t\in [0,\infty)$ and $l=1,...,4$.\\

   For $n=1$, we can use a slightly different version of the statement which follows now.
 \begin{thm}
   Let $u:\AR^n \times [0,T) \to N\hookrightarrow \AR^k$, $T<\infty$, be a smooth solution of the extrinsic biharmonic map heat flow on $\AR^n$ with smooth initial datum $u_0:\AR^n \to N \hookrightarrow \AR^k$, $n\leq 4$, satisfying 
 \begin{align*}
 &\int|\nabla^l u|^2 dx <\infty \quad \text{ for each } t\in [0,T) \text{ and } l\in\{1,2,3,4\} \text{ and } \ \ \sup_{t\in[0,T)} \int|u|^2 \leq c_1^2 <\infty.
 \end{align*}
 If $n=4$ we need the additional assumption that $\int|\nabla u|^4 <\epsilon_0$ for a certain $\epsilon_0>0$.\\
   Then, for any $(x_0,t_0)\in\AR^n \times (0,T)$ and $0< R_0< \min\{t_0^{1/4}/2,1\}$ there is a smooth cut-off function $\varphi \in C^\infty_c (\AR^n)$ depending on $R_0$ and $f_n$ and a constant $C$ depending on $f_n, c_1$ and $N$, such that for every $R$ with $R_0 \leq R < \min\{t_0^{1/4}/2,1\}$, we have
      \begin{align*} 
\tfrac{\del}{\del R} \brk{\Psi \prs{u,R,\vp}} 
& \geq \tfrac{1}{2}\tfrac{1}{ R} \int_{T_R(t_0)} \tfrac{ \brs{\nabla u\cdot (x-x_0)+ 4(t-t_0)\partial_t u}^2}{4 \brs{t-t_0}} B \vp \,dx\,dt  - C (1+T) (1 + R^{1-n} ) \int_{\AR^n}|\lap u_0|^2 \,dx\\
&\hsp - C   \left(\int_{\AR^n}|\lap u_0|^2 \,dx\right)^{\frac{1}{2}} - C ( R^4 + R^{3-n} ) \int_{\AR^n}|\lap u_0|^2 \,dx, 
\end{align*}
which implies that $\Psi$ satisfies the following monotonicity formula for $n=1$:
  \begin{align*}
   &\Psi(u, R_1, \varphi) \leq \Psi(u, R_2, \varphi)  + C(T)(R_2-R_1) \int_{\AR^n}|\Delta u_0|^2 +  C(R_2-R_1) \left(\int_{\AR^n}|\Delta u_0|^2 \right)^{\frac{1}{2}}.
   \end{align*}

 \end{thm}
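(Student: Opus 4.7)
The plan is to specialize the differential inequality in part \textit{(iii)} of the preceding theorem to the present setting, where the global $L^2$-integrability of the derivatives of $u$ replaces the localized quantities on $[\varphi>0]$. That earlier inequality already produces the desired leading positive term $\tfrac{1}{2R}\int_{T_R(t_0)}\tfrac{|\nabla u\cdot(x-x_0)+4(t-t_0)\partial_t u|^2}{4|t_0-t|}B\varphi\,dx\,dt$, accompanied by three error terms involving $\sup_{t}\int_{[\varphi>0]}|\nabla\Delta u|^2$, $\sup_{t}\int_{[\varphi>0]}|\nabla u|^2$, and $\int|\Delta u_0|^2$. Since $[\varphi>0]\subset\AR^n$, I would first dominate both suprema by their global counterparts, and then use the present hypotheses to replace those by expressions depending only on $\int|\Delta u_0|^2$, $c_1$ and $T$.

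The first ingredient is the standard $L^2$ energy identity for the extrinsic biharmonic map heat flow: since $\partial_t u\in T_uN$ while $f(u)$ captures the normal correction, testing $\partial_t u=-\Delta^2 u-f(u)$ with $\Delta^2 u$ yields $\tfrac{d}{dt}\tfrac{1}{2}\int|\Delta u|^2=-\int|\partial_t u|^2\leq 0$, so $\int|\Delta u(t)|^2\leq\int|\Delta u_0|^2$ for every $t\in[0,T)$. The identity $\int|\nabla u|^2=-\int u\cdot\Delta u$ combined with Cauchy--Schwarz and the hypothesis $\int|u|^2\leq c_1^2$ then gives $\int|\nabla u(t)|^2\leq c_1\bigl(\int|\Delta u_0|^2\bigr)^{1/2}$, and absorbing $c_1$ into the final constant reproduces the term $-C\bigl(\int|\Delta u_0|^2\bigr)^{1/2}$ of the claimed inequality.

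The more delicate bound is $\sup_{t\in[0,T)}\int|\nabla\Delta u|^2$. For this I would invoke the higher-order a priori estimates of Lamm \cite{LammExtr}: using the $L^2$ assumptions on $\nabla^l u$ for $l\leq 4$, the bound $\int|u|^2\leq c_1^2$, and (for $n=4$) the $L^4$-smallness of $\nabla u$, a differential inequality for $\tfrac{d}{dt}\tfrac{1}{2}\int|\nabla\Delta u|^2$ is obtained in which the nonlinear contribution of $\Delta f(u)$ is absorbed into the dissipation by Gagliardo--Nirenberg and Sobolev interpolation, leaving a coefficient of $\int|\nabla\Delta u|^2$ that depends only on $c_1$, $N$, and $\epsilon_0$. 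A Gr\"onwall argument produces the time-uniform bound $\sup_{t\in[0,T)}\int|\nabla\Delta u|^2\leq C(N,c_1,\epsilon_0)(1+T)\int|\Delta u_0|^2$, which is precisely the source of the factor $(1+T)$ in the first error term of the statement. Combining this with the elementary inequality $R^4+R^{5-n}\leq 2R^{1-n}\leq 2(1+R^{1-n})$ valid for $R\leq 1$ and $n\geq 1$ turns the old term $C(R^4+R^{5-n})\sup\int|\nabla\Delta u|^2$ into the new $C(1+T)(1+R^{1-n})\int|\Delta u_0|^2$, while the last error term $-C(R^4+R^{3-n})\int|\Delta u_0|^2$ is carried over unchanged.

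Putting these three replacements into part \textit{(iii)} of the main theorem yields the stated differential inequality. For the monotonicity formula we specialize to $n=1$, where every exponent $1-n$, $3-n$, $5-n$ is non-positive on $[R_0,1]$, so the $R$-dependent error factors are all uniformly bounded and the differential inequality reduces to $\tfrac{\del}{\del R}\Psi(u,R,\varphi)\geq -C(T)\int|\Delta u_0|^2-C\bigl(\int|\Delta u_0|^2\bigr)^{1/2}$ after dropping the non-negative soliton term. Integrating in $R$ over $[R_1,R_2]$ and rearranging gives the monotonicity formula. The principal obstacle is the Lamm-type higher-order energy estimate in the critical dimension $n=4$, where the quintic nonlinearity in $\nabla u$ forces a careful absorption argument based on the $\epsilon_0$-smallness of $\int|\nabla u|^4$; in lower dimensions this step is routine.
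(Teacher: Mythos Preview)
Your overall plan—specialize the differential inequality of Theorem~\ref{mainthm-generalN} using the a priori estimates of Lemma~\ref{lem:apriori}—is correct, and your treatment of the $\sup_t\int|\nabla u|^2$ term via Lemma~\ref{lem:apriori}~(\ref{second-zw-2u3}) matches the paper exactly. The gap is in your handling of the third-order term. You start from the \emph{second} inequality of Theorem~\ref{mainthm-generalN}, which carries $\sup_t\int_{[\varphi>0]}|\nabla\Delta u|^2$, and then claim a Gr\"onwall bound $\sup_t\int|\nabla\Delta u|^2\leq C(1+T)\int|\Delta u_0|^2$. This estimate is not established in the paper and your sketch does not produce it: a differential inequality of the form $\partial_t g\leq Cg$ gives exponential, not linear, growth in $T$, while a milder form $\partial_t g\leq C$ would yield $g(t)\leq \int|\nabla\Delta u_0|^2+Ct$, introducing a dependence on $\int|\nabla\Delta u_0|^2$ that the statement does not allow. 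Moreover, by immediately degrading the prefactor $(R^4+R^{5-n})$ to $(1+R^{1-n})$, you discard the only advantage of the second inequality over the first.

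The paper avoids this altogether by starting from the \emph{first} inequality of Theorem~\ref{mainthm-generalN}, which already carries the prefactor $(1+R^{1-n})$ but contains the \emph{space-time} integral $\int_{T_R}|\nabla\Delta u|^2\,dx\,dt$ instead of a supremum. That quantity is exactly what Corollary~\ref{cor:W3est} controls: integrating Lemma~\ref{lem:apriori}~(\ref{third-3}) (respectively~(\ref{fourth-4}) when $n=4$) in time and using Lemma~\ref{lem:apriori}~(\ref{first-1}) to bound $\int_0^t\int|\partial_t u|^2$ gives $\int_0^t\int|\nabla^3 u|^2\leq(1+t)\,C$, hence $\int_{T_R}|\nabla\Delta u|^2\leq(1+T)\,C$. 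No pointwise-in-time control of $\int|\nabla\Delta u|^2$ is needed, and the factor $(1+T)$ appears simply as the length of the time interval, not via Gr\"onwall.
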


   In order to estimate $\sup_t \int|\nabla \Delta u|^2$ also for $n=4$, we can use a result from \cite{LammExtr} to get the following:
   \begin{cor}
 In the case $n=4$, let $u:\AR^4 \times [0,T) \to N\hookrightarrow \AR^k$, be a smooth solution of the extrinsic biharmonic map heat flow on $\AR^4$ with smooth initial datum $u_0:\AR^4 \to N \hookrightarrow \AR^k$. Then there is an $\epsilon_0>0$ and an $r>0$ such that if 
 \[
 \sup_{(x,t)\in \AR^4\times [0,T)} \left[ 2\int_{B_{2r}(x)}|\Delta u|^2 dx +\left(\int_{B_{2r}(x)}|\nabla u|^4 dx\right)^{\frac{1}{2}}\right] <\epsilon_0,
 \]
   then, for any $(x_0,t_0)\in\AR^n \times (0,T)$ and $0< R_0< \min\{t_0^{1/4}/2,1\}$ there is a smooth cut-off function $\varphi \in C^\infty_c (\AR^n)$ depending on $R_0$ and $f_n$ and a constant $C$ depending on $f_n$  such that, for  $R_0 \leq R_1<R_2 < \min\{t_0^{1/4}/2,1\}$, we have
   \begin{align*}
   &\Psi(u, R_1, \varphi) \leq \Psi(u, R_2, \varphi) + C (R_2- R_1)\tilde E_0 + C(1+\tfrac{1}{R_0}) (R_2- R_1)\int|\Delta u_0|^2,
   \end{align*}
   where $\tilde E_0$ depends on $u_0, n$ and $N$.
\end{cor}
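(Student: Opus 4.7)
The plan is to derive this monotonicity formula by integrating the pointwise differential inequality for $\tfrac{\del}{\del R}\Psi(u,R,\vp)$ supplied by the preceding theorem, after controlling its two $t$-dependent error terms by means of the small-energy regularity theorem in $n=4$ due to Lamm \cite{LammExtr}.

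Specializing the exponents in that differential inequality to $n=4$ gives $R^4+R^{5-n}=R^4+R$, which is bounded on $[R_0,1]$, and $R^4+R^{3-n}=R^4+R^{-1}$. Discarding the non-negative first term on the right-hand side leaves
\[
\tfrac{\del}{\del R}\Psi(u,R,\vp)\geq -C\sup_{t}\int_{[\vp>0]}|\N\lap u|^2\,dx-C\sup_{t}\int_{[\vp>0]}|\N u|^2\,dx-C(R^4+R^{-1})\int_{\AR^4}|\lap u_0|^2\,dx
\]
for $R_0\leq R\leq\min\{t_0^{1/4}/2,1\}$, the supremum being over $t\in[0,t_0-R_0^4]$. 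The core task is therefore to bound the two suprema by a quantity $\tilde E_0$ depending only on $u_0$, $n$ and $N$.

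I would invoke the $\varepsilon$-regularity statement of \cite{LammExtr} at this point: the scale-invariant smallness hypothesis postulated in the corollary is tailored to coincide with Lamm's smallness condition in the critical dimension, under which he establishes uniform interior estimates on all derivatives of $u$ on balls of radius $r$. This yields the desired time-uniform bound $\sup_{t}\int_{[\vp>0]}(|\N u|^2+|\N\lap u|^2)\,dx\leq \tilde E_0$ with $\tilde E_0=\tilde E_0(u_0,n,N)$. The $|\N u|^2$ summand can alternatively be handled through the monotonicity $\int|\lap u|^2(t)\leq\int|\lap u_0|^2$ of the extrinsic biharmonic energy along the flow combined with a standard interpolation of $\|\N u\|_{L^2}$ against $\|u\|_{L^2}$ and $\|\lap u\|_{L^2}$.

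With these uniform bounds in place, the remaining integration step is routine. For $R_0\leq R_1\leq R_2\leq\min\{t_0^{1/4}/2,1\}$ one has
\[
\int_{R_1}^{R_2}R^{-1}\,dR\leq\tfrac{R_2-R_1}{R_0},\qquad \int_{R_1}^{R_2}R^4\,dR\leq R_2-R_1,
\]
so integrating the differential inequality from $R_1$ to $R_2$ yields exactly
\[
\Psi(u,R_1,\vp)\leq \Psi(u,R_2,\vp)+C(R_2-R_1)\tilde E_0+C\Bigl(1+\tfrac{1}{R_0}\Bigr)(R_2-R_1)\int|\lap u_0|^2
\]
after renaming constants. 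The main obstacle is the first step: one must verify that the precise form of the scale-invariant smallness assumption stated in this corollary matches Lamm's hypotheses sharply enough to deliver a time-uniform $L^2$-bound on $\N\lap u$ over $[\vp>0]$ whose constant $\tilde E_0$ genuinely depends only on $u_0$, $n$ and $N$, and not on $R_0$; once that is settled, the rest is a direct integration.
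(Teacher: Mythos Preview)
Your proposal is correct and follows essentially the same approach as the paper: both invoke the second differential inequality of the preceding theorem, use Lamm's small-energy regularity in dimension $4$ (applied as in the proof of Proposition~\ref{prop:absch}) to bound $\sup_t\int_{[\vp>0]}(|\N u|^2+|\N\lap u|^2)\,dx$ by a constant $\tilde E_0$ independent of $R_0$, and then integrate, handling the $R^{-1}$ term via $\int_{R_1}^{R_2}R^{-1}\,dR=\log(R_2/R_1)\leq (R_2-R_1)/R_0$. Your closing remark about the independence of $\tilde E_0$ from $R_0$ is exactly the point the paper isolates by referring back to the argument of Proposition~\ref{prop:absch}.
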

In Section~\ref{sect:variations}, we recall the Euler-Lagrange equation for the energy $E_2$. Properties of the biharmonic heat kernel are derived in Section~\ref{sect:propbiheat}. Then we prove the above monotonicity formulas in Section~\ref{sect:monotonicity}, first in the linear and then in the non-linear case.
\newpage
\section*{Acknowledgement}
The authors want to thank Andrea Malchiodi, Michael Struwe, Jonas Hirsch, Moritz Egert and particularly Tobias Lamm for useful discussions on the topic. We are especially grateful to Tobias Lamm for pointing out a sketch of the proof of Proposition~\ref{prop:absch}. Furthermore, the first author is deeply grateful to Casey Lynn Kelleher for raising questions about monotonicity formulas for higher-order flows and for the initial calculations they carried out together on this topic in Princeton. The first author is partially supported by the Deutsche Forschungsgemeinschaft (DFG) under the project number MA 7559/1-2.

\section{Variation of the biharmonic energy} \label{sect:variations}
\begin{defn}
For a closed Riemannian manifold $(N,h)$ which is isometrically embedded in $\AR^k$ or for $N=\AR^k$ and for a map $u:\AR^n\to N \hookrightarrow \AR^k$ of regularity $W^{2,2}_{loc}(\AR^n, \AR^k)$ with values in $N$ for almost every $x\in \AR^n$, the extrinsic biharmonic energy is defined as 
\begin{align*}
E_2(u) := \frac{1}{2} \int_{\AR^n} |\Delta u|^2 dx,
\end{align*}
where $\Delta u$ is the Laplacian on $\AR^n$. Critical points of $E$ under compactly supported smooth variations with values in $N$ are called \emph{extrinsic biharmonic maps}.
\end{defn}
The Euler-Lagrange equation of $E_2$ for $N=\mathbb S^{k-1}$ was computed in \cite{ChangWangYang}. Formulations for general $N$ can be found in \cite{LammExtr, WangR4, WangRm}. We will use the following version.

\begin{lemma}[\cite{LammExtr}, Section 2 in \cite{WangRm}]
A smooth extrinsic biharmonic map $u:\AR^n\to N\hookrightarrow \AR^k$ satisfies
\begin{align}
\Delta^2 u \perp T_u N,  \ \ \text{i.e. } \int_{\AR^n}\langle \Delta^2 u, \phi\rangle dx = 0 
\end{align}
for all $\phi\in C^\infty_0(\AR^n,\AR^k) $ with $\phi(x) \in T_{u(x)} N$. We state an equivalent formulation. A smooth extrinsic biharmonic map satisfies
\begin{align}\begin{split} \label{equ:flow}
\Delta^2 u &= - \sum_{i=n+1}^k \left(\Delta \langle \nabla u, (d\nu_i\circ u) \nabla u\rangle + \operatorname{div}( \langle \Delta u, (d\nu_i\circ u)\nabla u\rangle) + \langle \nabla \Delta u, (d\nu_i\circ u) \nabla u\rangle \right) \nu_i\circ u\\
 &=: - f(u).
\end{split}
\end{align}
where $\{\nu_i\}_{i=n+1}^k$ is a smooth orthonormal frame of the normal space of $N$ near $u(x)$.
\end{lemma}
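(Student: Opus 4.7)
The plan is to prove this in two stages: first obtain the weak Euler--Lagrange condition $\Delta^{2}u\perp T_uN$ by a variational argument, and then convert this into the explicit pointwise expression \eqref{equ:flow} using the local orthonormal normal frame $\{\nu_i\}_{i=n+1}^{k}$.

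For the first stage, I would take a compactly supported smooth section $\phi\in C^{\infty}_0(\mathbb R^n,\mathbb R^k)$ with $\phi(x)\in T_{u(x)}N$ for all $x$, and construct an admissible variation by setting $u_s(x):=\Pi_N(u(x)+s\phi(x))$, where $\Pi_N$ denotes the nearest-point projection in a tubular neighborhood of $N$. Since $\phi$ is tangential, $\tfrac{d}{ds}u_s\big|_{s=0}=\phi$. Differentiating $E_2(u_s)$ under the integral sign and integrating by parts twice (the compact support of $\phi$ absorbs the boundary terms) produces
\begin{align*}
0=\tfrac{d}{ds}E_2(u_s)\big|_{s=0}=\int_{\mathbb R^n}\langle \Delta u,\Delta\phi\rangle\,dx=\int_{\mathbb R^n}\langle \Delta^2 u,\phi\rangle\,dx.
\end{align*}
Since this holds for every tangential test field, the distributional statement $\Delta^{2}u\perp T_uN$ follows, and smoothness of $u$ promotes it to the pointwise statement.

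For the second stage, the key identity is the tangentiality of $\nabla u$: $\langle \partial_\alpha u,\nu_i\circ u\rangle=0$ for all $\alpha,i$. Differentiating once more in $\partial_\alpha$ and summing yields the standard identity
\begin{align*}
\langle \Delta u,\nu_i\circ u\rangle=-\langle \nabla u,(d\nu_i\circ u)\nabla u\rangle.
\end{align*}
Applying $\Delta$ to both sides and expanding the left-hand side via the Leibniz rule, I would isolate $\langle \Delta^{2}u,\nu_i\circ u\rangle$ as
\begin{align*}
\langle \Delta^{2}u,\nu_i\circ u\rangle=-\Delta\langle \nabla u,(d\nu_i\circ u)\nabla u\rangle-2\langle \nabla\Delta u,(d\nu_i\circ u)\nabla u\rangle-\langle \Delta u,\operatorname{div}((d\nu_i\circ u)\nabla u)\rangle.
\end{align*}
A further application of the product rule rewrites the last term as $\operatorname{div}\langle \Delta u,(d\nu_i\circ u)\nabla u\rangle-\langle \nabla\Delta u,(d\nu_i\circ u)\nabla u\rangle$, which cancels one of the two $\langle\nabla\Delta u,\cdot\rangle$ contributions and gives exactly the three bracketed terms appearing in \eqref{equ:flow}. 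Since the first stage guarantees $\Delta^{2}u\in (T_uN)^{\perp}$, writing $\Delta^{2}u=\sum_i\langle \Delta^{2}u,\nu_i\circ u\rangle\,\nu_i\circ u$ then produces the claimed formula.

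The main obstacle is purely bookkeeping: correctly differentiating the composition $\nu_i\circ u$ (distinguishing $d\nu_i$ acting on $\partial_\alpha u$ from the chain rule applied again) and verifying that the two occurrences of $\langle\nabla\Delta u,(d\nu_i\circ u)\nabla u\rangle$ combine with the right signs. There is no analytic difficulty once $u$ is assumed smooth; the existence of the local frame $\{\nu_i\}$ is standard for $N$ smoothly embedded, and the pointwise identities transfer unambiguously to the stated equation.
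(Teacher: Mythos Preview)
Your argument is correct and is the standard derivation of the Euler--Lagrange equation for $E_2$. Note, however, that the paper does not supply its own proof of this lemma: it is stated with attribution to \cite{LammExtr} and \cite{WangRm}, so there is no proof in the text to compare against. Your two-stage outline (variational characterisation via nearest-point projection, then the Leibniz-rule computation starting from $\langle\partial_\alpha u,\nu_i\circ u\rangle=0$) is exactly the approach taken in those references, and your sign bookkeeping in the final cancellation is right.
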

We follow \cite{LammExtr} to define the (formally) associated $L^2$-gradient flow.
\begin{defn}
A smooth solution $u:\AR^n \times [0,T) \to N \hookrightarrow\AR^k$ satisfying
\begin{align*}
\partial_t u = -\Delta^2 u - f(u) \  \text{ where } f(u) \text{ is defined in } (\ref{equ:flow})
\end{align*}
is called a solution of the \emph{biharmonic map heat flow}.\\
If $N=\AR^k$, then the system of equation decouples and it satisfies to consider a solution $u:\AR^n\times[0,T)\to\AR$ of the scalar biheat equation $\partial_t u=-\Delta^2 u$. In this case, the maximal time of existence is always infinity.
\end{defn}

\begin{rmk} \label{rmk:ortho}
The above definitions and properties particularly imply that $\int_{\AR^n} \langle f(u), \phi \rangle =0 $ for $\phi\in C^\infty_0(\AR^n,\AR^k) $ with $\phi(x) \in T_{u(x)} N$. This means that $f(u)$ is $L^2$-orthogonal to $T_u N$.
\end{rmk}

\section{Properties of the biharmonic heat kernel} \label{sect:propbiheat}
\noindent 
The biharmonic heat kernel with concentration at $(x_0,t_0)=(0,0)$ of $\Phi \mapsto \partial_t \Phi + \Delta^2\Phi$ is given by \cite{AGG, GG, GMO}
\begin{align}
 b(x,t) &= \alpha_n \frac{f_n(\eta)}{t^{n/4}}, \ \ \ \eta=\frac{|x|}{t^{1/4}},\\
 f_n(\eta) &= \eta^{1-n} \int_0^\infty \exp\left(-s^4\right) (\eta s)^{n/2} \operatorname{J}_{\frac{n-2}{2}} (\eta s) \,ds, \label{equ:integral}
\end{align}
where $ \operatorname{J}_\nu$ denotes the $\nu$-th Besselfunction of first kind and $\alpha_n>0$ is a normalization constant. The $f_n$ have exponential decay in the following way: For any $n\geq 1$ there exists $K=K_n>0$, $\mu=\mu_n$ such that
\begin{align*}
 |f_n(\eta)| \leq K \exp\left( -\mu \eta^{\frac{4}{3}}\right) \qquad  \text{ for all }\eta\geq 0,
\end{align*}
see \cite{Galaktionov, KochLamm}. Also estimates on the decay of the derivatives of $f_n$, and thus on the derivatives of $b$, are available \cite{KochLamm}. We will need to consider the backwards biharmonic heat kernel solving $\partial_t B = \Delta^2 B$ with concentration at $(x_0,t_0)$, which is given by 
\begin{align} \label{defn:B}
 B(x,t) := B_{(x_0,t_0)} (x,t) := b(x-x_0, t_0-t) = \alpha_n \frac{f_n\left(\frac{|x-x_0|}{(t_0-t)^{1/4}}\right)}{(t_0 - t)^{n/4}}.
\end{align}

\begin{rmk}
 The classical second order heat kernel on $\AR^n$ concentrating at $(x_0, t_0)=(0,0)$ is
 \[
  k(x,t) = \frac{1}{(4\pi t)^{n/2}} \exp\left(\frac{-|x|^2}{4t}\right)  \ \ \ \text{ for } (x,t)\in\AR^n\times (0,\infty)
 \]
and satisfies $\nabla k = -\frac{x}{2t} k$. It turns out that the biharmonic heat kernel satisfies a similar equation which we present now. This property will be crucial for our monotonicity results later on.
\end{rmk}

\begin{lemma} \label{lem:propbiheat}
The biharmonic heat kernel $b(x,t) = \frac{\alpha_n}{t^{\frac{n}{4}}} f_n\left(\frac{|x|}{t^{\frac{1}{4}}}\right)$  on $\AR^n$ satisfies 
 \begin{align} \label{eq:0}
\nabla \Delta b(x,t) = \frac{x}{4 t}b(x,t) \ \ \ \text{ on } \AR^n \times (0,\infty).
\end{align}
\end{lemma}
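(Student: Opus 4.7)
The plan is to exploit the self-similarity of $b$ together with the biheat equation $\partial_t b = -\Delta^2 b$, and to identify the vector field $V := \nabla \Delta b - \tfrac{x}{4t}\,b$ as being both divergence-free and purely radial. An elementary one-dimensional ODE argument will then force $V \equiv 0$ on $\AR^n \times (0,\infty)$.

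First I would compute $\partial_t b$ in two ways. From the self-similar form $b(x,t) = t^{-n/4}\,g(x/t^{1/4})$ with $g(y) := \alpha_n f_n(|y|)$, the chain rule yields
\begin{align*}
\partial_t b = -\tfrac{n}{4t}\,b - \tfrac{1}{4t}\,x\cdot \nabla b = -\tfrac{1}{4t}\operatorname{div}(x\,b),
\end{align*}
while the PDE gives $\partial_t b = -\operatorname{div}(\nabla \Delta b)$. Subtracting yields $\operatorname{div} V = 0$. Since $b$ is smooth and radial in $x$ for $t>0$ (thanks to the integral representation and the derivative decay bounds from \cite{Galaktionov, KochLamm}), $\nabla \Delta b$ points along $x$, and so does $\tfrac{x}{4t}\,b$; hence $V(x,t) = H(r,t)\,\tfrac{x}{r}$ for a scalar function $H$, with $r := |x|$. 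The divergence identity then collapses to the ODE $\partial_r(r^{n-1} H(r,t)) = 0$, so $r^{n-1} H(r,t)$ is independent of $r$.

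To fix the constant of integration, I would use that the gradient of a smooth radial function vanishes at the origin, giving $V(0,t) = 0$, i.e.\ $H(0,t) = 0$. For $n \geq 2$, letting $r \to 0^+$ in $r^{n-1}H(r,t) = \mathrm{const}$ forces that constant to be zero; for $n=1$ we directly have $H \equiv H(0,t) = 0$. Either way $V \equiv 0$, which is the claim.

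The only real obstacle is conceptual: recognizing the divergence structure $\partial_t b = -\tfrac{1}{4t}\operatorname{div}(x\,b)$ that self-similarity produces, so that the PDE combines with it to yield a divergence-free field. Once that is in place, everything else is routine, and the boundary behavior at $r=0$ (and at $r=\infty$, if one prefers to argue there using the exponential decay of $f_n$) is elementary.
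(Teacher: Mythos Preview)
Your proof is correct and takes a genuinely different route from the paper's. The paper proceeds by direct radial computation: it writes $\Delta^x b = t^{-1/2}\,\Delta^\eta f_n(\eta)$ and then invokes the identity $(\Delta^\eta f_n)'(\eta) = \tfrac{\eta}{4} f_n(\eta)$, which is quoted from \cite{AGG}, to obtain $\nabla\Delta b = \tfrac{x}{4t}\,b$ immediately. Your argument instead derives this from first principles: the self-similar scaling gives $\partial_t b = -\tfrac{1}{4t}\operatorname{div}(x\,b)$, the PDE gives $\partial_t b = -\operatorname{div}(\nabla\Delta b)$, and combining the two yields a radial divergence-free field that must vanish by the behavior at $r=0$. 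The advantage of your approach is that it is self-contained and explains \emph{why} the identity holds (self-similarity plus the equation), rather than importing the ODE from \cite{AGG}; it also makes transparent that the same trick would work for any self-similar fundamental solution of a homogeneous evolution equation in divergence form. The paper's approach is shorter once the ODE is accepted as known, and gives the slightly more refined profile-level statement $(\Delta^\eta f_n)' = \tfrac{\eta}{4} f_n$ as a byproduct.
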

\begin{proof}
 We use the notation $f=f_n$ and
 \[
  \Delta^\eta f (\eta) := \frac{n-1}{\eta} f'(\eta) + f''(\eta)
 \]
which was introduced in \cite{AGG}. With $\eta=\frac{|x|}{t^{\frac{1}{4}}}$ we get that
 \begin{align*}
  \Delta^x (f(\eta)) &= \sum_i \partial^2_i (f(\eta)) = \sum_i \partial_i \left(f'(\eta) \frac{x_i}{|x|t^{\frac{1}{4}}}\right)\\
  &=  f''(\eta)\sum_i \frac{x_i^2}{|x|^2t^{\frac{1}{2}}} + f'(\eta) \frac{n}{|x|t^{\frac{1}{4}}} + \sum_i f'(\eta) \frac{x_i}{t^{\frac{1}{4}}}\left(-\frac{x_i}{|x|^3}\right)\\
  &= f''(\eta) \frac{1}{t^{\frac{1}{2}}} + f'(\eta) \frac{n-1}{t^{\frac{1}{2}}\frac{|x|}{t^{\frac{1}{4}}}}\\
  &= \frac{1}{t^{\frac{1}{2}}} \Delta^\eta f(\eta).
 \end{align*}
Using $\left(\Delta^\eta f_n(\eta)\right)' = \frac{\eta}{4} f_n(\eta)$ from \cite[Proof of Theorem~2.2]{AGG} we obtain
\begin{align*}
 \nabla^x\Delta^x (f(\eta))  = \frac{1}{t^{\frac{1}{2}}} \nabla^x \left(\Delta^\eta f(\eta)\right)
 = \frac{1}{t^{\frac{1}{2}}} \left(\Delta^\eta f\right)' (\eta) \nabla^x \eta
 = \frac{1}{t^{\frac{1}{2}}} \frac{\eta}{4} f(\eta) \frac{x}{|x|t^{\frac{1}{4}}}
 = \frac{x}{4 t} f(\eta)
\end{align*}
and 
\begin{align*}
 \nabla^x\Delta^x b(x,t) = \alpha_n \nabla^x\Delta^x \left(t^{-\frac{n}{4}} f(\eta)\right)
 = \alpha_n t^{-\frac{n}{4}}  \frac{x}{4 t} f(\eta)
  = \frac{x}{4 t} b(x,t).
\end{align*}

\end{proof}

\begin{cor} \label{cor:diffHarnack}
The biharmonic heat kernel $b(x,t) = \frac{\alpha_n}{t^{\frac{n}{4}}} f_n\left(\frac{|x|}{t^{\frac{1}{4}}}\right)$  on $\AR^n$ satisfies the differentiable Matrix Harnack type equation
\begin{align}\label{equ:MatrixHarnack}
b\cdot \nabla_i\nabla_j \Delta b - \nabla_i\Delta b\,\nabla_j b - \frac{\delta_{ij} b^2}{4t} = 0 \qquad \text{ on }\AR^n\times (0,\infty)
\end{align}
and (in the traced form)
 \begin{align} \label{eq:6}
 (\partial_t b)\cdot b +\frac{\Delta |\nabla b|^2}{2} -   |\nabla^2 b|^2 + \frac{n}{4t}  b^2 = 0 \qquad \text{ on }\AR^n\times (0,\infty).
\end{align}

\end{cor}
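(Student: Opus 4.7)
The plan is to read both identities as essentially immediate consequences of Lemma~\ref{lem:propbiheat}, which supplies the relation $\nabla_i\Delta b = \frac{x_i}{4t}\,b$. The corollary is really just a repackaging of one further spatial derivative of this identity, and I expect no genuine analytical obstacle beyond careful bookkeeping of the cross term and the forward/backward convention for $b$.

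For the matrix identity~\eqref{equ:MatrixHarnack}, I would differentiate $\nabla_i\Delta b = \frac{x_i}{4t}\,b$ in $x_j$, obtaining
\[
\nabla_j\nabla_i\Delta b \;=\; \frac{\delta_{ij}}{4t}\,b \;+\; \frac{x_i}{4t}\,\nabla_j b.
\]
Multiplying through by $b$ and then substituting $\frac{x_i}{4t}\,b = \nabla_i\Delta b$ (from Lemma~\ref{lem:propbiheat}) into the last term converts the cross piece into $\nabla_i\Delta b\,\nabla_j b$, so that a direct rearrangement yields~\eqref{equ:MatrixHarnack}.

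For the traced form~\eqref{eq:6}, I would set $j=i$ in~\eqref{equ:MatrixHarnack} and sum over $i$, which gives
\[
b\cdot\Delta^2 b \;-\; \nabla\Delta b\cdot\nabla b \;-\; \frac{n}{4t}\,b^2 \;=\; 0.
\]
Since $b$ solves the forward biheat equation $\partial_t b + \Delta^2 b = 0$, the first term equals $-b\cdot\partial_t b$. To rewrite $\nabla\Delta b\cdot\nabla b$ in terms of $|\nabla^2 b|^2$ and $\Delta|\nabla b|^2$, I would invoke the standard Bochner-type identity on Euclidean space,
\[
\tfrac{1}{2}\,\Delta|\nabla b|^2 \;=\; |\nabla^2 b|^2 \;+\; \nabla b\cdot\nabla\Delta b.
\]
Combining these two substitutions reproduces~\eqref{eq:6} exactly.

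The only subtle points are the sign convention ($b$ solves the forward biheat equation, so $\Delta^2 b = -\partial_t b$) and the correct form of Bochner's identity; both are routine. Since Lemma~\ref{lem:propbiheat} has already done the substantive work, the proof amounts to one differentiation, one trace, and one standard substitution.
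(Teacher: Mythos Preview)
Your proposal is correct and follows essentially the same approach as the paper: differentiate the identity from Lemma~\ref{lem:propbiheat} once more, multiply by $b$, and resubstitute to obtain~\eqref{equ:MatrixHarnack}; then trace, use $\partial_t b = -\Delta^2 b$, and apply the Bochner identity $\tfrac{1}{2}\Delta|\nabla b|^2 = |\nabla^2 b|^2 + \nabla b\cdot\nabla\Delta b$ to reach~\eqref{eq:6}. The paper's proof is virtually identical, with only trivial differences in the order of indices.
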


\begin{proof}
We have $\nabla_j\Delta b = \frac{x_j}{4t} b$ due to (\ref{eq:0}) and thus
\begin{align*}
b\cdot \nabla_i\nabla_j \Delta b  & = \frac{\delta_{ij}}{4t}b^2 + \frac{b\, x_i \nabla_j b}{4 t}  \\
 &=  \frac{\delta_{ij}}{4t}b^2 +  \nabla_i\Delta b\nabla_j b,
\end{align*}
which yields (\ref{equ:MatrixHarnack}). Summing up and tracing (\ref{equ:MatrixHarnack}), using $\partial_t b = -\Delta^2 b$ and computing
\begin{align*}
 \nabla\Delta b \cdot \nabla b = \frac{1}{2}  \Delta |\nabla b|^2 - |\nabla^2 b|^2
\end{align*}
brings us to (\ref{eq:6}).
\end{proof}

In the following, we prove further properties of the biharmonic heat kernel which will be used later.

\begin{lemma} \label{lem:bound1}
    The first zeros $\xi_{1}^{n}, \xi_{1}^{n+2}, ...$ of the generating functions $f_n, f_{n+2}, ...$ of the biharmonic heat kernel are in ascending order, i.e.,  $\xi_{1}^{n} < \xi_{1}^{n+2} < \xi_{1}^{n+4} < ...$.
\end{lemma}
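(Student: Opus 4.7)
The plan is to derive a first-order recursion linking $f_n$ and $f_{n+2}$ directly from their Bessel integral representation and then to extract a monotonicity argument for $f_n$ on $[0,\xi_1^{n+2}]$. Differentiating $f_n(\eta) = \eta^{-(n-2)/2}\int_0^\infty e^{-s^4} s^{n/2} J_{(n-2)/2}(\eta s)\,ds$ under the integral sign and applying the classical Bessel identity $\frac{d}{du}[u^{-\nu}J_\nu(u)] = -u^{-\nu}J_{\nu+1}(u)$ with $\nu=(n-2)/2$, the powers of $\eta$ and $s$ collapse cleanly to give
\[
f_n'(\eta) = -\eta\,f_{n+2}(\eta), \qquad \eta>0.
\]
Since $f_{n+2}(0)>0$ (computable from the power series of $J_{n/2}$ at the origin) and $f_{n+2}$ is continuous and positive on $[0,\xi_1^{n+2})$, this recursion forces $f_n'(\eta)<0$ throughout $(0,\xi_1^{n+2})$. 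Hence $f_n$ is strictly decreasing on $[0,\xi_1^{n+2}]$ from the positive value $f_n(0)$, and its first positive critical point occurs exactly at $\xi_1^{n+2}$.

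By strict monotonicity and the intermediate value theorem, the inequality $\xi_1^n<\xi_1^{n+2}$ is equivalent to $f_n(\xi_1^{n+2})<0$. Integrating the recursion from $r$ to $\infty$ and using the super-exponential decay bound $|f_n(\eta)|\le K e^{-\mu\eta^{4/3}}$ recalled above (so that $f_n(\infty)=0$) yields the useful identity
\[
f_n(r) = \int_r^\infty \sigma\, f_{n+2}(\sigma)\, d\sigma,
\]
so the task reduces to the strict sign inequality $\int_{\xi_1^{n+2}}^\infty \sigma\,f_{n+2}(\sigma)\,d\sigma<0$. Once $\xi_1^n<\xi_1^{n+2}$ has been obtained, iterating the very same argument to the pair $(f_{n+2},f_{n+4})$ and so on produces the full ascending chain $\xi_1^n<\xi_1^{n+2}<\xi_1^{n+4}<\cdots$.

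The main obstacle I expect is precisely the proof that this tail integral is negative. My approach would be to write the tail as the alternating series $\sum_{k\ge 1}(-1)^k A_k$ with $A_k := \int_{\xi_k^{n+2}}^{\xi_{k+1}^{n+2}} \sigma|f_{n+2}(\sigma)|\,d\sigma$ (where $\xi_k^{n+2}$ denotes the $k$-th positive zero of $f_{n+2}$) and to establish strict monotonicity $A_k>A_{k+1}$ for every $k\ge 1$, so that the classical alternating-series comparison forces the sum to be strictly negative. The decay of the $A_k$ should follow from the Galaktionov/Koch--Lamm envelope $|f_{n+2}(\eta)|\le K e^{-\mu\eta^{4/3}}$ together with their sharper pointwise bounds on the oscillatory factor; balancing the polynomial growth $\sigma$ against the super-exponential oscillatory decay on each successive oscillation interval is the delicate point. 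As a fallback route I would also consider a Sturm-type comparison on the second-order ODE $F_\nu''+\eta^{-1}F_\nu'-\nu^2\eta^{-2}F_\nu = -\frac{\eta}{4}F_{\nu-1}$ satisfied by $F_\nu(\eta):=\eta^{(n-2)/2}f_n(\eta)$ with $\nu=(n-2)/2$, and comparing first zeros under $\nu\mapsto\nu+1$; the forcing term $F_{\nu-1}$ on the right-hand side is what prevents an immediate application of the classical Sturm comparison theorem and would have to be controlled separately.
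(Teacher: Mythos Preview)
You derive the recursion $f_n'(\eta)=-\eta\,f_{n+2}(\eta)$ correctly and correctly deduce that $f_n$ is strictly decreasing on $[0,\xi_1^{n+2}]$, with its first positive critical point sitting exactly at $\xi_1^{n+2}$. The gap is the step you yourself flag: showing $f_n(\xi_1^{n+2})<0$ via the tail integral $\int_{\xi_1^{n+2}}^\infty \sigma f_{n+2}(\sigma)\,d\sigma<0$. Neither of your proposed routes is likely to close cleanly. The alternating-series argument would require that the successive bump areas $A_k$ are strictly decreasing, and the Galaktionov/Koch--Lamm envelope only gives a uniform upper bound $|f_{n+2}|\le K e^{-\mu\eta^{4/3}}$; without a matching lower bound on each bump you cannot compare $A_k$ to $A_{k+1}$. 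The Sturm route has the nonhomogeneous forcing term you already noticed, and controlling it brings you back to the same sign question.

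The paper avoids all of this by running the recursion in the opposite direction. It takes as input from \cite{AGG} that $f_n$ has a first local minimum $m_1^n$ with $f_n(m_1^n)<0$ and that $f_n'<0$ on $(0,m_1^n)$. The recursion then gives $f_{n+2}>0$ on $(0,m_1^n)$, so $\xi_1^{n+2}\ge m_1^n$; and since $f_n(0)>0>f_n(m_1^n)$, the first zero $\xi_1^n$ lies in $(0,m_1^n)$. Hence $\xi_1^n<m_1^n\le\xi_1^{n+2}$, and iteration finishes.

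Note that your own computation shows the first positive critical point of $f_n$ is $\xi_1^{n+2}$, so in fact $m_1^n=\xi_1^{n+2}$. Your missing inequality $f_n(\xi_1^{n+2})<0$ is therefore \emph{literally} the statement $f_n(m_1^n)<0$, which is a known structural fact about the biharmonic kernel profile (see \cite{AGG}, also \cite{GG}). Cite it instead of trying to prove it from scratch via tail integrals.
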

\begin{proof}
    Let $\mu_{1}^{n}=0$ be the first maximum and $m_{1}^{n}$ the first minimum (here $f_n(m_{1}^{n})<0$) of $f_n$.
    First, observe that $f_{n}^{\prime}(\eta) <0$ for all $\eta\in (0,m_{1}^{n})$ \cite{AGG}. Thus, with $f_{n}^{\prime}(\eta) = -\eta\, f_{n+2}(\eta)$ \cite[Equation~(11)]{AGG}, we get that $f_{n+2}>0$ for all $\eta\in (0,m_{1}^{n})$. Since we have a positive maximum at $\eta=0$, we get the inequality for all $\eta\in [0,m_{1}^{n})$. Hence, $f_{n+2}(\eta)\neq 0$ for all $\eta\in [0,m_{1}^{n})$. In particular, the first zero of $f_{n+2}$ appears strictly after the first zero of $f_n$. Applying the recursive formula $f_{n}^{\prime}(\eta) = -\eta\, f_{n+2}(\eta)$ to all $n$ implies the result.
\end{proof}

We immediately get the following statement.

\begin{cor}
Let $\xi_1^n$ be the first zero of $f_n$. Then,
    for any $0<\xi(n)<\xi_1^n$, the terms $\frac{f_{n+2}}{f_n}$ and $\frac{f_{n+4}}{f_n}$ are bounded from below and above with positive bounds for all $\eta\in [0,\xi(n)]$.
\end{cor}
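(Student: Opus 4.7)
The plan is to reduce this to a compactness argument, using Lemma~\ref{lem:bound1} together with continuity and strict positivity of the relevant generating functions on $[0,\xi(n)]$.

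First, I would invoke Lemma~\ref{lem:bound1} to get the strict ordering $\xi_1^n < \xi_1^{n+2} < \xi_1^{n+4}$. Since the hypothesis is $\xi(n) < \xi_1^n$, this places the entire closed interval $[0,\xi(n)]$ strictly to the left of the first zero of each of $f_n$, $f_{n+2}$, $f_{n+4}$. Because $\eta=0$ is the first (positive) maximum of each $f_m$ (as used in the proof of Lemma~\ref{lem:bound1}) and the next critical point is the first minimum $m_1^m$, which satisfies $m_1^m < \xi_1^m$, the values $f_m(\eta)$ are in fact strictly positive throughout $[0,\xi(n)]$ for $m \in \{n, n+2, n+4\}$.

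Second, I would verify continuity of each $f_m$ on $[0,\xi(n)]$. For $\eta>0$ this is clear from the integral representation \eqref{equ:integral} and the exponential decay factor $\exp(-s^4)$ that permits differentiation under the integral. At $\eta=0$ the apparent singularity $\eta^{1-m}$ is cancelled by the Bessel-function asymptotic $\operatorname{J}_{(m-2)/2}(\eta s) \sim (\eta s/2)^{(m-2)/2}/\Gamma(m/2)$, giving a finite positive value
\[
f_m(0) \;=\; \frac{1}{2^{(m-2)/2}\,\Gamma(m/2)} \int_0^\infty e^{-s^4}\, s^{m-1}\,ds \;>\; 0.
\]

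Third, since $f_n$, $f_{n+2}$, $f_{n+4}$ are continuous and strictly positive on the compact set $[0,\xi(n)]$, each attains a positive minimum and finite maximum. The ratios $f_{n+2}/f_n$ and $f_{n+4}/f_n$ are therefore continuous and strictly positive on $[0,\xi(n)]$, hence bounded below by a positive constant and above by a finite constant depending only on $n$ and $\xi(n)$.

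There is no real obstacle: the entire content is the observation from Lemma~\ref{lem:bound1} that the relevant interval sits inside the first positive lobe of each $f_m$, combined with compactness. The only point requiring any care is checking finiteness and positivity at the endpoint $\eta=0$, which is handled by the Bessel asymptotic above.
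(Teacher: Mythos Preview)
Your argument is correct and matches the paper's approach: the paper simply records the corollary as an immediate consequence of Lemma~\ref{lem:bound1} without further detail, and your compactness/continuity reasoning is precisely what is implicit there. One small correction: the first minimum $m_1^m$ of $f_m$ satisfies $f_m(m_1^m)<0$ (cf.\ the proof of Lemma~\ref{lem:bound1}), so in fact $m_1^m>\xi_1^m$, not $m_1^m<\xi_1^m$; this parenthetical claim is unnecessary anyway, since strict positivity of $f_m$ on $[0,\xi(n)]$ already follows directly from $\xi(n)<\xi_1^n\leq\xi_1^m$ together with continuity and $f_m(0)>0$.
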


Later in our work, we need to control $\Delta^x B(x,t)$ which is directly related to $\Delta^\eta f_n (\eta)$. Thus, the following lemma will be useful.

\begin{lemma} \label{lem:bound2}
    For all $n\in\mathbb{N}$, there exist $\Tilde{\eta}_{1}(n)>0$ and $\Tilde{\eta}_{2}(n)>0$ as well as constants $c_1(n)>0$ and $c_2(n)>0$, all depending on $n$, such that 
    \begin{enumerate}
        \item $f_{n}(\eta) \geq c_1 > 0\, \quad \text{for all }\, \eta \leq \Tilde{\eta}_{1}\, \quad \text{and} $  \label{h-1} \\
        \item $\Delta^{\eta}f_{n}(\eta) = f_{n}^{\prime\prime}(\eta) + \frac{n-1}{\eta}f_{n}^{\prime}(\eta) \leq -c_2 < 0\, \quad \text{for all }\, \eta \leq \Tilde{\eta}_{2}\, . $ \label{h-2}
    \end{enumerate}
\end{lemma}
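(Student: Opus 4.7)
The plan is to reduce both statements to continuity together with explicit positivity/negativity at $\eta=0$, using the recursion $f_n'(\eta)=-\eta f_{n+2}(\eta)$ from \cite[Equation~(11)]{AGG} that was already exploited in Lemma~\ref{lem:bound1}.

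For \eqref{h-1}: the integral representation \eqref{equ:integral}, combined with the analysis in \cite{AGG} used in the proof of Lemma~\ref{lem:bound1} (where the authors invoke that $\eta=0$ is a positive maximum of $f_n$), gives $f_n(0)>0$. Since $f_n$ is continuous, there exists $\tilde\eta_1(n)>0$ such that $f_n(\eta)\geq \tfrac{1}{2}f_n(0)=:c_1(n)>0$ for all $\eta\in[0,\tilde\eta_1]$.

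For \eqref{h-2}: the apparent singularity $\frac{n-1}{\eta}f_n'(\eta)$ is removable. Using $f_n'(\eta)=-\eta f_{n+2}(\eta)$ twice, I compute
\begin{align*}
\tfrac{n-1}{\eta} f_n'(\eta) &= -(n-1)\,f_{n+2}(\eta),\\
f_n''(\eta) &= -f_{n+2}(\eta) - \eta f_{n+2}'(\eta) = -f_{n+2}(\eta) + \eta^2 f_{n+4}(\eta),
\end{align*}
so that
\begin{equation*}
\Delta^\eta f_n(\eta) = -n\,f_{n+2}(\eta) + \eta^2 f_{n+4}(\eta).
\end{equation*}
This expression is continuous on $[0,\infty)$, and at $\eta=0$ evaluates to $-n\,f_{n+2}(0)<0$ since $f_{n+2}(0)>0$ by the same argument as above. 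Continuity then yields $\tilde\eta_2(n)>0$ with $\Delta^\eta f_n(\eta)\leq -\tfrac{n}{2}f_{n+2}(0)=:-c_2(n)<0$ for $\eta\in[0,\tilde\eta_2]$.

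The only nontrivial ingredient is the removal of the singularity at $\eta=0$ and the identification $\Delta^\eta f_n(0)=-n f_{n+2}(0)$; everything else is continuity, so I do not anticipate a real obstacle here.
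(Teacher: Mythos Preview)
Your proof is correct. Both you and the paper arrive at the identity $\Delta^\eta f_n(\eta)=\eta^2 f_{n+4}(\eta)-n\,f_{n+2}(\eta)$ via the recursion $f_n'=-\eta f_{n+2}$, so the key algebraic step coincides. The difference lies in what comes next: you evaluate at $\eta=0$ and invoke continuity, which immediately gives the existence of $\tilde\eta_1,\tilde\eta_2,c_1,c_2$; the paper instead expands $f_n$, $f_{n+2}$, $f_{n+4}$ through the Bessel integral representation and uses the elementary two-sided bound $1-\tfrac{(x/2)^2}{\alpha+1}\le P_\alpha(x)\le 1$ to obtain explicit formulas such as $\tilde\eta_1(n)=2\sqrt{2\,\Gamma(\tfrac{n+4}{4})/\Gamma(\tfrac{n+2}{4})}$ and $\tilde\eta_2(n)=\sqrt{\tfrac{4n}{n+1}\,\Gamma(\tfrac{n+6}{4})/\Gamma(\tfrac{n+4}{4})}$, together with explicit $c_1(n),c_2(n)$ in terms of Gamma functions. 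For the lemma as stated (pure existence), your argument is shorter and fully adequate; the paper's approach buys quantitative control, which is exploited in the remark immediately following the lemma and informs the concrete choice of the cut-off function later on.
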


\begin{proof}
    We use the integral representation of $f_n$ (see \ref{equ:integral}), namely
    $$f_n(\eta) = \eta^{1-n}\int_{0}^{\infty} \exp\left(-s^4\right)(\eta s)^{\frac{n}{2}} J_{\frac{n-2}{2}}(\eta s)\,ds\, ,$$
    where $J_{\alpha} (x) = \big(\frac{x}{2}\big)^{\alpha} P_{\alpha}(x)$, with $P_{\alpha}(x):=\sum_{m=0}^{\infty}(-1)^m \frac{(\frac{x}{2})^{2m}}{m! \Gamma(\alpha+m+1)}$ (see \cite[\href{https://dlmf.nist.gov/10.2}{(10.2(ii))}]{NIST:DLMF}). We furthermore use some rough estimates appearing in a very recent result by Reif \cite{Reif}, namely 
    $$1-\frac{(\frac{x}{2})^2}{\alpha +1} =: P_{\alpha}^{1}(x)\leq P_{\alpha}(x)\leq 1\quad \text{for all } \alpha \geq -\frac{1}{2}\, \text{and } x\geq 0\, .$$
    Thus,
    \begin{align*}
        f_n(\eta) &\geq \eta^{1-n}\int_{0}^{\infty} \exp\left(-s^4\right)(\eta s)^{\frac{n}{2}} \big(\frac{\eta s}{2}\big)^{\frac{n-2}{2}} \Bigg(1-\frac{(\eta s)^2}{4( \frac{n-2}{2}+1)}\Bigg)\,ds \\
        & = 2^{-\frac{n-2}{2}}\int_{0}^{\infty} \exp\left(-s^4\right) s^{n-1}\Bigg(1-\frac{(\eta s)^2}{2n}\Bigg)\,ds \\  
        & = 2^{-\frac{n-2}{2}}\Bigg[\int_{0}^{\infty} \exp\left(-s^4\right) s^{n-1}\,ds - \frac{\eta^2}{2n}\int_{0}^{\infty}\exp\left(-s^4\right) s^{n+1}\,ds \Bigg] \\  
        & = 2^{-\frac{n-2}{2}}\Bigg[\frac{1}{4}\Gamma\bigg(\frac{n}{4}\bigg) - \frac{\eta^2}{8n}\Gamma\bigg(\frac{n+2}{4}\bigg) \Bigg]\, ,
    \end{align*}
    where we have substituted $t=s^4$ and used $\Gamma(x) = \int_{0}^{\infty}e^{-t}t^{x-1}\,dt$. Now, 
    \begin{align*}
\eta \,<\,  2\,\sqrt{2\,\frac{\Gamma(\frac{n+4}{4})}{\Gamma(\frac{n+2}{4})}}\, &\Longleftrightarrow  \eta^2 \,<\,  \frac{8n}{4} \,\frac{\Gamma\Big(\frac{n}{4}\Big)}{\Gamma\Big(\frac{n+2}{4}\Big)} \\
& \Longleftrightarrow \frac{\eta^2}{8n} \,\Gamma\Big(\frac{n+2}{4}\Big) \,<\,\frac{1}{4}\,\Gamma\Big(\frac{n}{4}\Big) \\
&\Longrightarrow f_{n}(\eta) \,>\, 0,
    \end{align*}
    where we have used $x\Gamma(x)=\Gamma(x+1)$.   \\
For (\ref{h-2}), we know that $f_{n}^{\prime}(\eta)=-\eta f_{n+2}(\eta)$. Thus,
$\Delta^{\eta}f_{n}(\eta) = \eta^2 f_{n+4}(\eta) - n f_{n+2}(\eta)\, $
and we get
\begin{align*}
    &\Delta^{\eta}f_{n}(\eta)    \\
    &= \eta^2 \eta^{1-(n+4)}\int_{0}^{\infty} \exp\left(-s^4\right)(\eta s)^{\frac{n+4}{2}} J_{\frac{n+2}{2}}(\eta s)\,ds - n \eta^{1-n}\int_{0}^{\infty} \exp\left(-s^4\right)(\eta s)^{\frac{n}{2}} J_{\frac{n}{2}}(\eta s)\,ds  \\
    &= \eta^{1-\frac{n}{2}}\int_{0}^{\infty} \exp\left(-s^4\right) s^{\frac{n+4}{2}} \big(\frac{\eta s}{2}\big)^{\frac{n+2}{2}} P_{\frac{n+2}{2}}(\eta s)\,ds -n\eta^{-\frac{n}{2}}\int_{0}^{\infty} \exp\left(-s^4\right) s^{\frac{n+2}{2}} \Big(\frac{\eta s}{2}\Big)^{\frac{n}{2}} P_{\frac{n}{2}}(\eta s)\,ds   \\
    &= \frac{1}{2^\frac{n}{2}} \Big(\frac{\eta^2}{2}\int_{0}^{\infty} \exp\left(-s^4\right) s^{n+3} P_{\frac{n+2}{2}}(\eta s)\,ds -n\int_{0}^{\infty} \exp\left(-s^4\right) s^{n+1} P_{\frac{n}{2}}(\eta s)\,ds \Big)\, .   \\
\end{align*}
Using the same estimates as above, we obtain
\begin{align*}
    2^{\frac{n}{2}}\Delta^{\eta}f_{n}(\eta) &\leq  \frac{\eta^2}{2}\int_{0}^{\infty} \exp\left(-s^4\right) s^{n+3} \,ds -n\int_{0}^{\infty} e^{-s^4} s^{n+1} \Big(1-\frac{(\eta s)^2}{2n +4}\Big)\,ds  \\
    &=  \Big(\frac{\eta^2}{2}+\frac{n \eta^2}{2(n+2)}\Big)\int_{0}^{\infty} \exp\left(-s^4\right) s^{n+3} \,ds -n\int_{0}^{\infty} \exp\left(-s^4\right) s^{n+1}\,ds    \\ \pagebreak[0]
    &= \eta^2 \Big(\frac{n+1}{n+2}\Big)\int_{0}^{\infty} \exp\left(-s^4\right) s^{n+3} \,ds -n\int_{0}^{\infty} \exp\left(-s^4\right) s^{n+1}\,ds    \\ \pagebreak[0]
    &= \eta^2 \Big(\frac{n+1}{n+2}\Big)\int_{0}^{\infty}\frac{1}{4} \exp\left(-t\right) t^{\frac{n}{4}} \,dt -n\int_{0}^{\infty} \frac{1}{4} \exp\left(-t\right) t^{\frac{n-2}{4}}\,dt   \\ \pagebreak[0]
    &= \eta^2 \Big(\frac{n+1}{n+2}\Big) \frac{1}{4}\,\Gamma\Big(\frac{n+4}{4}\Big) - \frac{n}{4}\,\Gamma\Big(\frac{n+2}{4}\Big)\, .
\end{align*}
Now, 
\begin{align*}
    &\quad& \Delta^{\eta}f_{n}(\eta) \,<&\, 0 \\
    &\Leftarrow& \eta^2 (n+1) \,\Gamma\Big(\frac{n+4}{4}\Big) \,<&\,  4n \Big(\frac{n+2}{4}\Big)\,\Gamma\Big(\frac{n+2}{4}\Big)    \\
    &\Leftrightarrow& \hfill \eta^2 (n+1) \,\Gamma\Big(\frac{n+4}{4}\Big) \,<&\,  4n \,\Gamma\Big(\frac{n+6}{4}\Big)    \\
    &\Leftrightarrow& \eta \,<&\,  \sqrt{\frac{4n}{n+1}\frac{\Gamma(\frac{n+6}{4})}{\Gamma(\frac{n+4}{4})}}\, .    \\
\end{align*}
\end{proof}

\begin{rmk}
    In Lemma~\ref{lem:bound2}, $\Tilde{\eta}_{1}(n)$ is given $\Tilde{\eta}_{1}(n)=2\sqrt{2\frac{\Gamma(\frac{n+4}{4})}{\Gamma(\frac{n+2}{4})}}$ and can furthermore be estimated by $\Tilde{\eta}_1(n)<2\sqrt{2}$ for all $n\geq3$. This follows from the fact that $\Gamma(x)$ is monotone increasing for $x\geq \frac{3}{2}$, which for $\Gamma\big(\frac{n+2}{4}\big)$ is the case when $n\geq4$. However, $\frac{3}{4}\Gamma\big(\frac{3}{4}\big)=\Gamma\big(\frac{7}{4}\big) > \Gamma\big(\frac{5}{4}\big)=\frac{1}{4}\Gamma\big(\frac{1}{4}\big)$, too \cite[\href{https://dlmf.nist.gov/5.4.E1}{(5.4.(i))}]{NIST:DLMF}. So, $\Gamma\big(\frac{n+4}{4}\big) > \Gamma\big(\frac{n+2}{4}\big)$ for all $n\geq 3$. The values for $n=1$ and $n=2$ can be calculated as $\Tilde{\eta}_1(1)=2\sqrt{2\frac{\Gamma(\frac{5}{4})}{\Gamma(\frac{3}{4})}}-\frac{1}{1000}\approx 2.43156...$ and $\Tilde{\eta}_1(2)=2\sqrt{2\frac{\Gamma(\frac{6}{4})}{\Gamma(\frac{4}{4})}}-\frac{1}{1000}\approx 2.66167...$, respectively. Then, for $n\geq 3$, $c_1(n)$ is given by $c_1(n) = \frac{2^{-\frac{n-2}{2}}}{n}\Big(\Gamma\big(\frac{n+4}{4}\big)-\Gamma\big(\frac{n+2}{4}\big)\Big)$. The values for $n=1$ and $n=2$ are $c_1(1)\approx 0.00105414...$ and $c_1(2)\approx 0.000333013...$ . Furthermore, $\Tilde{\eta}_{2}(n)$ is given by $\Tilde{\eta}_{2}(n)=\sqrt{\frac{4n}{n+1}\frac{\Gamma(\frac{n+6}{4})}{\Gamma(\frac{n+4}{4})}}$, and by the same reasoning as above it can be estimated by $\Tilde{\eta}_{2}(n)<\sqrt{\frac{4n}{n+1}}$ for all $n\in\mathbb{N}$. Then, $c_2(n)$ is given by $c_2(n)=-2^{-\frac{n}{2}}\frac{n}{n+2}\bigg(\Gamma\Big(\frac{n+4}{4}\Big) - \Gamma\Big(\frac{n+6}{4}\Big)\Bigg)$. However, due to some very rough estimates, these bounds are by no means sharp. Nevertheless, this is good enough for our purposes.
\end{rmk}

\section{A pre-entropy and its monotonicity behavior}\label{sect:monotonicity}

\begin{rmk} \label{rmk:solitons}
 The equation $\partial_t u = -\lap^2 u - f(u)$ is invariant under shifts $x\mapsto x-x_0$, $t\mapsto t-t_0$ and also under parabolic rescaling in the following sense. We define $u_R(x,t) := u(Rx,R^4t) $ for $R>0$, then $u_R$ is a solution of the biharmonic map heat flow whenever $u$ was a solution. \emph{Solitons} or \emph{self-similar solutions} are solutions of the biharmonic map heat flow that are invariant under this rescaling, that is, $\frac{\partial u_R}{\partial R}  (x,t)\big|_{R=1} =0$ for all $(x,t)\in \AR^n\times I$ for an interval $I\subset \AR$. We compute the equation that $u$ needs to satisfy to be a soliton:
 \begin{align*}
  0= \frac{\partial u_R}{\partial R}  (x,t)\big|_{R=1} & = \left( (\nabla u) (Rx,R^4t) \cdot x + 4 R^3\, t\,(\partial_t u)(Rx,R^4t)\right)\big|_{R=1}\\
  &= \nabla u(x,t)\cdot x + 4 t \partial_t u (x,t).
 \end{align*}
So, a soliton is a solution of the biharmonic map heat flow satisfying
\[
 \Sol(u) := \nabla u\cdot (x-x_0) + 4 (t-t_0) \partial_t u =0 \ \ \ \text{if either } t< t_0 \text{ or } t> t_0.
\]
These include self-shrinkers (for example in the form $u(x,t) = v\left(\frac{|x|}{(-t)^{1/4}}\right), t\in (-\infty, 0)$) and expanders (for example in the form $v\left(\frac{|x|}{t^{1/4}}\right), t\in (0, \infty)$). For results about solitons for the harmonic map heat flow and the Yang-Mills heat flow, see \cite{GastelSolitons, SolitonsYM, Weinkove-YMSol, Self-similar, GermainExp, DeruelleLammExp, DeruelleExp, BiernatExp}.  Since we know that there are no singularities for the extrinsic biharmonic map heat flow for $n\leq 3$ from \cite{LammExtr}, no self-shrinkers exist in these dimensions. To the authors' knowledge, there are no results about the (non-)existence and properties of expanders for the biharmonic map heat flow.
Interestingly, a result of Galaktionov \cite{Galaktionov2} shows that there are self-shrinkers (and thus finite time singularities) of a fourth order semilinear equation on space dimension $1$. This result shows that, in general, sub-criticality does not imply non-existence of singularities. The specific structure of the non-linearity needs to be taken into account to get non-existence of singularities as it was done for example in \cite{LammExtr}.
\end{rmk}

\begin{defn} \label{defn:pre-entropy}
Let $u:\AR^n\times [0,T) \to N \hookrightarrow \AR^k$ be a fourth order $L^2$-gradient flow of a (geometric) energy $E$. For $R>0$ and $t_0\in (0,T)$ we define a slice $S_R$ and a horizontal layer $T_R$ by
\begin{align*}
S_R(t_0)&:= \AR^n \times\{t = t_0-R^4\}, \\
 T_R(t_0)&:=\{(x,t)\in \AR^n \times \AR : t_0 - 16R^4 < t < t_0-R^4 \}
\end{align*}
 and let $\varphi \in C^\infty(\AR^n)$ be a smooth, non-negative, bounded function (for example a smooth cut-off function). A functional $\Psi (u,R,\varphi)$ is called a \emph{pre-entropy} if the following three properties are satisfied:
 \begin{enumerate}
 \item  $\Psi (u,R,\varphi)$ is a non-negative integral over $S_R(t_0)$ or $T_R(t_0)$, involving $u$ and its derivatives such that $\Psi $ is scaling invariant with respect to the rescaling $
x := Ry, t := R^4s$. That is, with $u_R(x,t): = u(Rx,R^4t)$ and $\varphi_R(x,t): = \varphi(Rx,R^4t)$ it satisfies
\[
\Psi(u, R, \varphi)= \Psi(u_R, 1,\varphi_R).
\]
  \item shrinking solitons (see Remark~\ref{rmk:solitons}) are critical points of~$\Psi$,~i.e.
\[\frac{d}{dR} \Psi(\tilde u, R,1) =0
\]
for shrinking solitons $\tilde u$, and
\item there are $\alpha, \beta \in\mathbb N$ and $c_1, c_2\geq 0$ such that a (pseudo-)monotonicity formula of the form
\[
 \Psi( u, R_1, \varphi)\leq \exp \left(c_1 (R_2^\alpha - R_1^\alpha)\right)\Psi(u,R_2,\varphi) + c_2 \left(R_2^\beta-R_1^\beta\right) E_0
\]
holds for $R_1<R_2$. Here, $E_0= E(u(\cdot,0)) + Q$ is the initial energy plus possibly bounds on other energies ideally only depending on the initial time.
 \end{enumerate}

\end{defn}

 \begin{rmk}
 The monotone object in Huisken's monotonicity formula for the Mean Curvature flow \cite{HuiskenMono} is often called \emph{Gau\ss ian area} \cite[Chapter~10]{AndrewsBook} or $\mathcal{F}$-functional \cite{ColdingMinicozzi}. As stated in \cite{HuiskenMono}, this monotonicity formula was inspired by work of Giga and Kohn on semilinear heat equations \cite{GigaKohn1, GigaKohn2}. Another common name for a pre-entropy in the second order case is \emph{relative entropy} or \emph{weighted entropy}. Following \cite{Magni}, Colding and Minicozzi defined the \emph{entropy} for solutions of the Mean Curvature flow as a certain supremum over $(x_0, t_0)$ of the Gau\ss ian area \cite{ColdingMinicozzi}. Since we do not weight our energies with a Gau\ss ian density but with the biharmonic heat kernel and as we do not take a supremum over $(x_0, t_0)$, we prefer the more general notion \emph{pre-entropy}.
 \end{rmk}

\begin{defn}
We let $u:\AR^n \times [0,T) \to N\hookrightarrow \AR^k$  be a smooth solution of the extrinsic biharmonic map heat flow $\AR^n$, $\partial_t u = -\lap^2 u - f(u)$, $u(\cdot, 0)=u_0$, see (\ref{equ:flow}) for the definition of $f(u)$. 
For $(x_0,t_0) \in\AR^n\times (0,T)$, consider the backwards biharmonic heat kernel $B$ as defined in (\ref{defn:B}). We further let $\vp:\AR^n\times[0,T)\to\AR$ to be a non-negative, smooth, bounded function with compact support. 
We define our functionals $\Psi$ and $\Phi$ which will turn out to be \emph{pre-entropies} in certain settings as follows:
\begin{align}\begin{split}
\label{eq:entropydef}
\Psi \prs{u, R ,\vp} &:= \tfrac{1}{2} \int_{T_R(t_0)} \brs{\lap u(x,t)}^2  B_{(x_0,t_0)}(x,t) \vp(x,t) \,dx\,dt \\
&\qquad\qquad\qquad\qquad- \tfrac{1}{2} \int_{T_R(t_0)}\brs{\nabla u(x,t)}^2 \lap B_{(x_0,t_0)}(x,t) \vp(x,t) \,dx\,dt.
\end{split}
\end{align}
and
\begin{align}\begin{split}
\label{eq:entropydef2}
\Phi \prs{u, R ,\vp} &:= \tfrac{R^4}{2} \int_{S_R(t_0)} \brs{\lap u(x,t)}^2  B_{(x_0,t_0)}(x,t) \vp(x,t) \,dx\\
&\qquad\qquad\qquad\qquad- \tfrac{R^4}{2} \int_{S_R(t_0)}\brs{\nabla u(x,t)}^2 \lap B_{(x_0,t_0)}(x,t) \vp(x,t) \,dx.
\end{split}
\end{align}
\end{defn}
For the computations it suffices to consider the case $(x_0,t_0)=(0,0)$ due to the invariance under shifting.
Let us first check some basic properties of $\Psi$ and $\Phi$.

\begin{lemma}\label{lem:scaling}
 The functionals defined in (\ref{eq:entropydef}) and (\ref{eq:entropydef2}) are invariant under the  scaling  $x := Ry,$ $t := R^4s$.
 That is, with $u_R(y,s):= u(Ry, R^4s)$ and $\varphi_R(x,t):= \varphi(Rx, R^4t)$ they satisfy
 \[
  \Psi(u,R,\varphi) = \Psi(u_R,1,\varphi_R), \ \ \ \Phi(u,R,\varphi) = \Phi(u_R,1,\varphi_R)
 \]

\end{lemma}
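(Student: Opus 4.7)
The plan is to reduce to $(x_0,t_0)=(0,0)$ using translation invariance and then perform the parabolic change of variables $x=Ry$, $t=R^4s$ in each of the four integrals defining $\Psi$ and $\Phi$, checking that the various powers of $R$ cancel. This is a routine computation once the scaling of each ingredient is recorded; the only ``content'' is the homogeneity of the kernel $B$ under parabolic rescaling, which follows directly from its explicit form.

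First I would fix $(x_0,t_0)=(0,0)$ and catalog the elementary scalings. The change of variables $x=Ry$, $t=R^4s$ sends $T_R(0)$ to $T_1(0)$ (resp.\ $S_R(0)$ to $S_1(0)$) and contributes a Jacobian $dx\,dt = R^{n+4}\,dy\,ds$ (resp.\ $dx=R^n\,dy$ on slices). Differentiating $u(Ry,R^4s)=u_R(y,s)$ gives $(\nabla_x u)(Ry,R^4s)=R^{-1}\nabla_y u_R$ and $(\Delta_x u)(Ry,R^4s)=R^{-2}\Delta_y u_R$, hence
\begin{align*}
|\nabla_x u|^2(Ry,R^4s) &= R^{-2}|\nabla_y u_R(y,s)|^2,\\
|\Delta_x u|^2(Ry,R^4s) &= R^{-4}|\Delta_y u_R(y,s)|^2.
\end{align*}
From the explicit form $B(x,t)=\alpha_n(-t)^{-n/4}f_n(|x|/(-t)^{1/4})$ one reads off the homogeneity
\begin{align*}
B(Ry,R^4s) &= R^{-n}B(y,s), \\
(\Delta_x B)(Ry,R^4s) &= R^{-n-2}\,\Delta_y B(y,s),
\end{align*}
where the second identity follows either by differentiating the first, or directly from the scaling of $\Delta$ combined with the homogeneity of $B$. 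Finally, $\varphi(Ry,R^4s)=\varphi_R(y,s)$ by definition.

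Substituting everything into the first term of $\Psi(u,R,\varphi)$ gives the factor $R^{-4}\cdot R^{-n}\cdot R^{n+4}=1$, while the second term produces $R^{-2}\cdot R^{-n-2}\cdot R^{n+4}=1$; each integral over $T_R(0)$ becomes the corresponding integral over $T_1(0)$ with $u_R,\varphi_R$ in place of $u,\varphi$, yielding $\Psi(u,R,\varphi)=\Psi(u_R,1,\varphi_R)$. For $\Phi$ the same substitution on the slice $S_R(0)\leftrightarrow S_1(0)$ gives $R^{-4}\cdot R^{-n}\cdot R^{n}=R^{-4}$ in the first term and $R^{-2}\cdot R^{-n-2}\cdot R^n=R^{-4}$ in the second, which is precisely compensated by the explicit prefactor $R^4$ in the definition of $\Phi$; hence $\Phi(u,R,\varphi)=\Phi(u_R,1,\varphi_R)$.

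There is no real obstacle here: the only step one must be careful with is the scaling of $\Delta_x B$, but this is an immediate consequence of the parabolic homogeneity of the kernel $B$ recorded above. The argument for $\Phi$ is identical to that for $\Psi$ except for the $R^4$ prefactor that accounts for the loss of one time dimension when integrating over a slice instead of a horizontal layer.
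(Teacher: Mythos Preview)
Your proof is correct and follows essentially the same approach as the paper: both reduce to $(x_0,t_0)=(0,0)$, record the scaling of $dx\,dt$, $\nabla u$, $\Delta u$, $B$, and $\Delta B$ under $x=Ry$, $t=R^4s$, and conclude by substitution. You are in fact slightly more explicit than the paper in tracking the exact power of $R$ in each integral and in spelling out how the $R^4$ prefactor in $\Phi$ compensates for the missing $dt$ on the slice.
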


\begin{proof}
 The scaling law leads to the following transformations
 \begin{align*}
dx &= R^n \,dy \\
dt &= R^4 \,ds \\
B(x,t)\big|_{(Ry, R^4s)}& = \frac{1}{R^n} B(y,s)\\
B(x,t) \,dx\,dt &= R^4 B(y,s) \,dy\,ds \\
\lap u (Ry, R^4s)& = \frac{1}{R^2} \lap u_R (y,s).
\end{align*}
For the Laplacian of $B$, we proceed as in Lemma~\ref{lem:propbiheat} and use that $\eta = \frac{|x|}{(-t)^{1/4}}$ and $\Delta^\eta f_n(\eta)$ is invariant under our rescaling procedure. We get
\[
 \Delta^x B(x,t) = \frac{\alpha_n}{(-t)^{(n+2)/4}} \Delta^\eta f_n(\eta)
\]
and thus
\[
 \Delta^x B(x,t)\big|_{(Ry,R^4s)} = \frac{\alpha_n}{(-R^4 s)^{(n+2)/4}} \Delta^\eta f_n(\eta) = \frac{1}{R^{2+n}} \Delta^y B(y,s).
\]
The scaling invariance now follows from transformation of the integrals.
\end{proof}

\subsection{The linear case} In the case $N=\AR^k$, our evolution equation reduces to the scalar bi-heat equation on $\AR^n$. We consider this case separately to clarify which properties of the monotonicity formula arise from the underlying structure and which come from the nonlinearity. First we derive some classical a priori estimates which we need later. 

\begin{lemma} \label{lem:flowprop}
Let $u:\AR^n \times [0,\infty) \to\AR$ be a smooth solution of the biheat equation on $\AR^n$ with smooth initial datum $u_0:\AR^n \to \AR$.
    Assume
    $$\int_{\mathbb{R}^n} |\nabla^{l} u (x,t)|^2 \,dx < \infty\, \quad \text{for all }\, t\in[0,\infty)\, \text{and for all }\, l\in\lbrace 1,2,3\rbrace\, . $$
    Then, for each $t\in [0,\infty)$ we get the a priori estimates
    \begin{align}
        \tfrac{1}{2} \int_{\mathbb{R}^n} |\Delta u|^2 (x,t)\,dx 
       + \int_0^t\int_{\AR^n} |\partial_t u|^2 (x,s) \,dx\,ds 
       &\leq \tfrac{1}{2} \int_{\mathbb{R}^n} |\Delta u_0|^2 \,dx\, , \label{est:lap-2} \\
         \tfrac{1}{2} \int_{\mathbb{R}^n} |\nabla u|^2 (x,t)\,dx + \int_0^t\int_{\AR^n} |\nabla\Delta u|^2(x,s) \,dx\,ds &\leq \tfrac{1}{2}  \int_{\mathbb{R}^n} |\nabla u_0|^2 \,dx\, , \label{est:N-2}      
    \end{align}
    If we further assume $\int_{\mathbb{R}^n} |\nabla^4 u (x,t)|^2 \,dx < \infty$ for each $t$, then we have the inequality
 \begin{align}
  \int_{\mathbb{R}^n} |\nabla\Delta u|^2 (x,t)\,dx &\leq \int_{\mathbb{R}^n} |\nabla\Delta u_0|^2 \,dx\, . \label{est:Nlap-2}
 \end{align}
\end{lemma}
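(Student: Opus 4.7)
My plan is to prove all three estimates by the classical parabolic energy method: differentiate the left-hand energy in $t$, substitute the equation $\partial_t u = -\Delta^2 u$, integrate by parts on $\AR^n$ to redistribute Laplacians between factors, and then integrate in $t$. The $L^2$-integrability of $\nabla^l u$ assumed at each time is precisely the decay needed so that all boundary terms at infinity vanish.

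For (\ref{est:lap-2}), I would compute
\begin{equation*}
\tfrac{d}{dt}\tfrac{1}{2}\int_{\AR^n}|\Delta u|^2\,dx \;=\; \int_{\AR^n}\Delta u\cdot\Delta\partial_t u\,dx \;=\; \int_{\AR^n}\Delta^2 u\cdot\partial_t u\,dx \;=\; -\int_{\AR^n}|\partial_t u|^2\,dx,
\end{equation*}
moving both Laplacians off of $\Delta u$ by two IBPs; for (\ref{est:N-2}),
\begin{equation*}
\tfrac{d}{dt}\tfrac{1}{2}\int_{\AR^n}|\nabla u|^2\,dx \;=\; -\int_{\AR^n}\Delta u\cdot\partial_t u\,dx \;=\; \int_{\AR^n}\Delta u\cdot\Delta^2 u\,dx \;=\; -\int_{\AR^n}|\nabla\Delta u|^2\,dx;
\end{equation*}
and for (\ref{est:Nlap-2}),
\begin{equation*}
\tfrac{d}{dt}\tfrac{1}{2}\int_{\AR^n}|\nabla\Delta u|^2\,dx \;=\; -\int_{\AR^n}\Delta^2 u\cdot\Delta\partial_t u\,dx \;=\; \int_{\AR^n}\Delta^2 u\cdot\Delta^3 u\,dx \;=\; -\int_{\AR^n}|\nabla\Delta^2 u|^2\,dx \;\le\; 0.
\end{equation*}
Integrating in $t$ from $0$ to the given time yields (\ref{est:lap-2}) and (\ref{est:N-2}) as identities (of which the stated inequalities are immediate consequences), and gives monotonicity of $\int|\nabla\Delta u|^2$ in $t$, which is (\ref{est:Nlap-2}).

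The principal obstacle will be making these integrations by parts rigorous, since $u$ is not compactly supported and, in the last step for (\ref{est:Nlap-2}), the expression $\nabla\Delta^2 u$ nominally requires five derivatives while the hypothesis only controls $\nabla^l u \in L^2$ through $l=4$. I would handle this by the standard truncation procedure: fix a radial $\eta\in C_c^\infty(\AR^n)$ with $\eta\equiv 1$ on $B_1$, support in $B_2$, set $\eta_R(x)=\eta(x/R)$ so $|\nabla^k\eta_R|\le C_k R^{-k}$, perform each IBP against the weight $\eta_R^2$, control the resulting annular errors on $B_{2R}\setminus B_R$ by Cauchy-Schwarz together with the assumed $L^2$-integrability of $\nabla^l u$, and let $R\to\infty$. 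The missing higher regularity in the last identity can be supplied by the smoothing property of the linear biheat semigroup, which guarantees $\nabla^k u(\cdot,t)\in L^2$ for all $k$ at each $t>0$; alternatively, because the problem is linear one can bypass IBPs entirely by writing $\int|\nabla\Delta u|^2(t) = \int|\xi|^6 e^{-2t|\xi|^4}|\hat u_0(\xi)|^2\,d\xi$ in Fourier space, whence (\ref{est:Nlap-2}) follows from $e^{-2t|\xi|^4}\le 1$ and (\ref{est:lap-2})--(\ref{est:N-2}) follow by differentiating under the integral and applying Plancherel.
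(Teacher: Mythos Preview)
Your approach is essentially the same as the paper's: differentiate the weighted energy, substitute $\partial_t u=-\Delta^2 u$, integrate by parts against a spatial cutoff, and send the cutoff radius to infinity. Two small points of comparison are worth noting. First, the paper uses the weight $\psi_r^4$ rather than $\eta_R^2$; the fourth power guarantees that every annular error term retains at least a factor $\psi_r^2$, so one can absorb the $|\partial_t u|$ and $|\nabla\Delta u|$ pieces back into the good term via Young's inequality (with an $\varepsilon$ that is sent to zero after $r\to\infty$). With only a square, some of your errors carry a bare $|\nabla\eta_R|^2$ with no surviving $\eta_R$, and you would need one more integration by parts to remove the highest-order factor before applying Cauchy--Schwarz; this still works, but the bookkeeping is what the fourth power avoids. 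Second, your concern about fifth derivatives in \eqref{est:Nlap-2} is slightly misplaced in the cutoff argument: since $u$ is smooth and $\psi_r$ has compact support, the main term $-\int\psi_r^4|\nabla\Delta^2 u|^2$ is perfectly well-defined and is simply discarded (bounded above by $0$); the hypothesis $\nabla^4 u\in L^2$ is needed only to kill the annular error terms, which involve at most four derivatives of $u$. Your Fourier alternative, writing $\int|\xi|^{2m}e^{-2t|\xi|^4}|\hat u_0|^2\,d\xi$ and using $e^{-2t|\xi|^4}\le 1$, is a genuinely more elementary route that the paper does not take; it exploits the linearity and bypasses both the cutoff machinery and the question of boundary terms entirely.
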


\begin{proof}
   We omit the proof of (\ref{est:lap-2}) because it works analogously to that of (\ref{est:N-2}). We start with (\ref{est:N-2}).
    We use classical cutoff functions 
    $\psi_r(x)=\psi_0\big(\frac{|x|}{r}\big)$ with
    $$\psi_0(\alpha)= \begin{cases}
        0 & \text{for }\, \alpha \geq 1   \\
        1 & \text{for }\, \alpha \leq \frac{1}{2}
    \end{cases}\,$$
    and $\psi_0$ being smooth and monotone for $\alpha\in[\frac{1}{2},1]$. 
    Now,
 \begin{align*}
       & \frac{\text{d}}{\text{d}t} \frac{1}{2}\int_{\mathbb{R}^n} \psi_r^4 |\nabla u|^2 \,dx   \\
       &= \int_{\mathbb{R}^n} \psi_r^4 \langle\nabla \partial_t u, \nabla u\rangle \,dx     \\
        &= -\int_{\mathbb{R}^n} \psi_r^4 \langle\nabla\Delta^2 u, \nabla u\rangle \,dx     \\
       &= - \int_{\mathbb{R}^n} \psi_r^4 |\nabla\Delta u|^2 \,dx - 4 \int_{\mathbb{R}^n} \psi_r^3\Delta\psi_r \langle\nabla\Delta u, \nabla u\rangle \,dx \\
       &\quad -12 \int_{\AR^n} \psi_r^2 |\nabla\psi_r|^2 \langle \nabla\Delta u,\nabla u\rangle  \,dx - 8 \int_{\AR^n} \psi_r^3  \nabla_i \psi_r \langle \nabla \nabla_i u , \nabla\Delta u \rangle \,dx + \textit{vanishing bdry terms}   \\
       &\leq - (1-3\varepsilon) \int_{\mathbb{R}^n} \psi_r^4 |\nabla\Delta u|^2 \,dx      \\
 &\quad + c(\varepsilon) \left(\int_{\mathbb{R}^n} |\Delta\psi_r|^2 |\nabla u|^2 \,dx  + \int_{\AR^n} |\nabla \psi_r|^4 |\nabla u|^2 \,dx 
 \nonumber + \int_{\AR^n} |\nabla \psi_r|^2 |\nabla^2 u|^2 \,dx \right)
    \end{align*}
    where we have used partial integration with vanishing boundary terms because of the compact support of the cutoff function $\psi_r$, the equation $\partial_t u = -\Delta^2 u$, $|\psi_r|\leq 1$ as well as Young's inequality.
    Integration from $0$ to $t$ gives
    \begin{align*}
    \frac{1}{2}\int_{\mathbb{R}^n} &\psi_r^4 |\nabla u|^2 (x,t) \,dx   + (1-3\varepsilon) \int_0^t \int_{\AR^n} \psi_r^4|\nabla \Delta u|^2 (x,s) \,dx \,ds\\
    & \leq \frac{1}{2}\int_{\mathbb{R}^n} \psi_r^4 |\nabla u|^2 (x,0) \,dx   + c(\varepsilon) \int_{0}^{t}\int_{\mathbb{R}^n} |\Delta\psi_r|^2 |\nabla u|^2 (x,s)\,dx\,ds \\
    &\qquad +  c(\varepsilon) \int_{0}^{t}\int_{\mathbb{R}^n} |\nabla\psi_r|^4 |\nabla u|^2 (x,s)\,dx\,ds  + c(\varepsilon)  \int_{0}^{t}\int_{\mathbb{R}^n} |\nabla\psi_r|^2 |\nabla^2 u|^2 (x,s)\,dx\,ds.
    \end{align*}
Next, we take $r\to \infty$ and see that
\begin{align*}
\frac{1}{2}\int_{\mathbb{R}^n} |\nabla u|^2 (x,t) \,dx   + (1-3\varepsilon) \int_0^t \int_{\AR^n}|\nabla \Delta u|^2 (x,s) \,dx \,ds \leq \frac{1}{2}\int_{\mathbb{R}^n} |\nabla u|^2 (x,0) \,dx
\end{align*}
because all terms involving a derivative of $\psi_r$ will converge to zero. In a last step, we let $\varepsilon\to 0$ and get (\ref{est:N-2}).\\
For (\ref{est:Nlap-2}), we repeat the first steps from before, now for $\nabla\Delta u$ instead of $\nabla u$. We arrive at
    \begin{align*}
        \frac{\text{d}}{\text{d}t} &\frac{1}{2}\int_{\mathbb{R}^n} \psi_r^4 |\nabla\Delta u|^2 \,dx 
        \leq - (1-3\varepsilon) \int_{\mathbb{R}^n} \psi_r^4 |\nabla\Delta^2 u|^2 \,dx      \\
 &\quad + c(\varepsilon) \left(\int_{\mathbb{R}^n} |\Delta\psi_r|^2 |\nabla\Delta u|^2 \,dx  + \int_{\AR^n} |\nabla \psi_r|^4 |\nabla \Delta u|^2 \,dx 
 \nonumber + \int_{\AR^n} |\nabla \psi_r|^2 |\nabla^2 \Delta u|^2 \,dx \right). 
    \end{align*}
    Now, we estimate the first term on the right hand side by zero and proceed with the rest as in the proof of (\ref{est:N-2}) with integration, $r\to\infty$ and use that $\int_{\AR^n} |\nabla^4 u|^2 \,dx$ is assumed to be bounded to get 
    $$\frac{1}{2}\int_{\mathbb{R}^n}|\nabla\Delta u|^2 (x,t) \,dx \leq \frac{1}{2}\int_{\mathbb{R}^n}|\nabla\Delta u|^2 (x,0) \,dx\, ,$$
    which is what we wanted to prove.
\end{proof}

 \begin{rmk}
 \begin{enumerate}
     \item  In the above proof for (\ref{est:Nlap-2}), we already needed $\nabla^4 u \in L^2$. If we additionally assume  $\int_{\AR^n}|\nabla^5 u|^2 (x,t)\,dx <\infty$ for eacht $t$, then we also get 
 \begin{align*}
        \tfrac{1}{2}  \int_{\mathbb{R}^n} |\nabla\Delta u|^2 (x,t)\,dx + \int_0^t\int_{\AR^n} |\nabla\Delta^2 u|^2 (x,s)\,dx\,ds \leq \tfrac{1}{2} \int_{\mathbb{R}^n} |\nabla\Delta u_0|^2 \,dx\, 
\end{align*} 
by the proof above.
\item For a smooth biharmonic map heat flow on a closed manifold $M$, $L^2$-bounds on any $\nabla^l u(\cdot, t)$ come for free. Also boundary terms do not appear so that integration by parts is easier to use. But then, the non-linear terms appear. For the extrinsic biharmonic map heat flow, a version will be proven in Section~\ref{sect:nonlinear}. A version for the intrinsic biharmonic map heat flow with non-positive sectional curvature of the ambient space can be found in \cite{Lamm2005}.
 \end{enumerate}

 \end{rmk}

The variable $R$ represents the difference to the concentration time $t_0$, and $t_0$ will be arbitrary within the time interval of smooth existence. Therefore, we restrict ourselves in the results to the case where $R\leq 1$. As a consequence, only the lowest powers $R^p$ appear in front of the  $L^2$-energies. If we want to allow larger values than $1$, then it is easy to derive versions of the statements in this article where the $R^q$-factors are replaced by polynomials of $R$.
 
 \begin{thm}\label{mainthm-derivatives-withoutSUP}
  Let $u:\AR^n \times [0,\infty) \to\AR$ be a smooth solution of the bi-heat equation on $\AR^n$ with initial datum $u_0:\AR^n \to \AR$ satisfying $ \int_{\AR^n} |\nabla^l u (x, t)|^2 dx<\infty$ for each $t\in [0,\infty)$ and $l=1,2,3$. Then, for any $(x_0,t_0)\in\AR^n \times (0,\infty)$ and $0< R_0\leq  \min \{t_0^{1/4}/2, 1\}$, there is a smooth cut-off function $\varphi \in C^\infty_c (\AR^n)$ depending on $R_0$ and $f_n$ such that both terms in the definition of $\Psi$ and $\Phi$ are non-negative, and there is a constant $C=C(f_n)$ such that, for every $R$ with $R_0 \leq R < t_0^{1/4}/2$ we have
      \begin{align} \begin{split}
\tfrac{\del}{\del R} \brk{\Psi \prs{u,R,\vp}} 
& \geq \tfrac{1}{2}\tfrac{1}{ R} \int_{T_R(t_0)} \tfrac{ \brs{\Sol u}^2}{4 \brs{t_0- t}} B \vp \,dx\,dt  - C (1 + R^{1-n} ) \int_{\AR^n}|\N u_0|^2 \,dx\\
&\hsp  - C ( R^4 + R^{3-n} ) \int_{\AR^n}|\lap u_0|^2 \,dx, \label{mainInequ-0-withoutSUP} \end{split}
\end{align}
where $\Sol u: = \nabla u\cdot (x-x_0) + 4 (t-t_0)\partial_t u$.
In particular, for $n=1$, after integration from $R_1$ to $R_2$ with $0< R_0\leq R_1 <R_2 \leq \min\{t_0^{1/4}/2, 1\}$ the integral $\Psi$ is a pre-entropy due to Definition~\ref{defn:pre-entropy} with monotonicity
\begin{align} \begin{split} \label{integratedinequ}
   \Psi( u, R_1, \vp) &\leq \Psi (u, R_2,\vp)  + C ({R_2}- R_1)\, \int_{\AR^n}| u_0'|^2 \,dx + C\left(R_2^3- R_1^3\right) \int_{\AR^n} | u_0''|^2 \,dx.
   \end{split}
  \end{align}

 \end{thm}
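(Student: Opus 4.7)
The plan is to compute $\del_R\Psi$ exactly, extract a non-negative soliton term, and control the remainder via Lemma~\ref{lem:flowprop}. First I use the scaling invariance $\Psi(u,R,\vp)=\Psi(u_R,1,\vp_R)$ from Lemma~\ref{lem:scaling} and differentiate the right-hand side in $R$, noting $\del_R u_R = \tfrac{1}{R}\Sol u_R$. Two integrations by parts in $y$ rewrite the first variation of $\Psi$ at $u_R$ in a direction $v$ as $\int v\brk{\lap^2 u_R\cdot B + 2\N\cdot(\lap u_R\,\N B) + \N u_R\cdot\N\lap B}$. Substituting $\lap^2 u_R = -\del_s u_R$ from the biheat equation for $u_R$, and $\N\lap B = -\tfrac{y}{4s}B$ from Lemma~\ref{lem:propbiheat}, the first and third contributions combine with $\Sol u_R = \N u_R\cdot y + 4s\del_s u_R$ to yield $\tfrac{1}{4\brs{s}}\brs{\Sol u_R}^2 B$. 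Rescaling back to $(x,t)\in T_R(t_0)$ gives the identity advertised in the introduction,
\[
\del_R\Psi(u,R,1) = \tfrac{1}{4R}\int_{T_R}\tfrac{\brs{\Sol u}^2}{\brs{t_0-t}}B\,dx\,dt + \tfrac{2}{R}\int_{T_R}\Sol u\cdot\N\cdot(\lap u\,\N B)\,dx\,dt,
\]
and as a side benefit confirms that shrinking solitons are critical points of $\Psi$.

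I then repeat the computation including a cutoff $\vp\in C^\infty_c(\AR^n)$ chosen to make both terms of $\Psi$ non-negative on the good region. Concretely, I take $\mathrm{supp}\,\vp\subset\{\brs{x-x_0}\leq\tilde\eta_0 R_0\}$ with $\tilde\eta_0<\min\{\tilde\eta_1(n),\tilde\eta_2(n)\}$ from Lemma~\ref{lem:bound2}, so that on $\mathrm{supp}\,\vp\cap T_R(t_0)$ for $R\geq R_0$ the parabolic variable $\eta=\brs{x-x_0}/(t_0-t)^{1/4}\leq\tilde\eta_0 R_0/R\leq\tilde\eta_0$; by Lemma~\ref{lem:bound2} both $B>0$ and $-\lap B>0$ hold there. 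The integrations by parts now spawn extra $\N\vp$ and $\lap\vp$ terms supported in $\{\N\vp\neq 0\}$. The cross term $\tfrac{2}{R}\int\Sol u\cdot\N\cdot(\lap u\,\N B)\vp$ is split by Young's inequality: half is absorbed into half the soliton term $\tfrac{1}{8R}\int\tfrac{\brs{\Sol u}^2}{\brs{t_0-t}}B\vp$ and the remainder is dominated by $\tfrac{C}{R}\int_{T_R}\brs{t_0-t}\tfrac{\brs{\N B}^2}{B}\brs{\N\lap u}^2\vp$ plus analogous $\lap u$-pieces. The quantitative positivity of $f_n$ from Lemma~\ref{lem:bound2}\textit{(i)} together with uniform bounds on $f_n',f_n'',f_n'''$ near $\eta=0$ yields the pointwise bound $\brs{\N B}^2/B\leq C(t_0-t)^{-1/2}B$ on the good region, so the $x$-integral is absorbed by $B$ and the $t$-integral produces the desired $R$-powers.

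All remaining cutoff errors reduce to $L^2$-norms of $\N u$ or $\lap u$ on $T_R(t_0)$; by the a priori estimates (\ref{est:lap-2}) and (\ref{est:N-2}) of Lemma~\ref{lem:flowprop} these are bounded by $\brs{T_R}\sup_t\int\brs{\N u}^2\leq 15R^4\int\brs{\N u_0}^2$ and similarly for $\lap u_0$, while the piece involving $\N\lap u$ exploits $\int_0^{t_0-R^4}\int\brs{\N\lap u}^2\leq\tfrac12\int\brs{\N u_0}^2$ from (\ref{est:N-2}). Keeping track of the powers of $R$ coming from the $(t_0-t)^{-n/4-\cdots}$ factors in $B$ and its derivatives, with $(t_0-t)\in[R^4,16R^4]$ on $T_R$, yields exactly the differential inequality with coefficients $C(1+R^{1-n})$ on $\int\brs{\N u_0}^2$ and $C(R^4+R^{3-n})$ on $\int\brs{\lap u_0}^2$. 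For $n=1$, $R^{1-n}=1$ and $R^{3-n}=R^2$, and integrating on $[R_1,R_2]\subset(0,1]$ gives $\int_{R_1}^{R_2}2\,dR=2(R_2-R_1)$ and $\int_{R_1}^{R_2}(R^4+R^2)\,dR\leq C(R_2^3-R_1^3)$ (using $R^4\leq R^2$ on $[0,1]$); dropping the non-negative soliton term then produces (\ref{integratedinequ}). The main obstacle will be verifying the pointwise bound $\brs{\N B}^2/B\leq C(t_0-t)^{-1/2}B$ on the good region and keeping the cutoff-error $R$-bookkeeping consistent with the exponents announced in the theorem.
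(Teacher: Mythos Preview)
Your proposal is correct and follows essentially the paper's own route: differentiate $\Psi(u_R,1,\vp_R)$ in $R$ via Lemma~\ref{lem:scaling}, combine the $\lap^2 u$ and $\N u\cdot\N\lap B$ contributions into the soliton term using $\N\lap B=-\tfrac{x}{4t}B$, split the remaining error integrals with Young against $\tfrac{|\Sol u|^2}{4|t|}B\vp$, and close with the a~priori bounds of Lemma~\ref{lem:flowprop}.

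Two clarifications on the points you flag as obstacles. First, your phrase ``the $x$-integral is absorbed by $B$'' should be read as the pointwise sup bound $B\leq C(t_0-t)^{-n/4}\leq CR^{-n}$ on $\mathrm{supp}\,\vp$ (where $\eta$ is bounded by Lemma~\ref{lem:bound2}), not the unit-mass property of $B$; this is exactly how the paper's ``Claim~1'' and ``Claim~2'' proceed and is what delivers the exponents $R^{1-n}$ and $R^{3-n}$. Second, the cutoff errors come not only from the integrations by parts but also from the $\tfrac{d}{dR}\vp_R=\tfrac{1}{R}\Sol\vp$ term; these involve $x\cdot\N\phi$, and since $\N\phi$ is supported in an annulus at $|x-x_0|\simeq R_0$ one has $|x||\N\phi|\simeq 1$, so after bounding $B$ and $\lap B$ as above their contributions are no worse than the main cross terms.
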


\begin{proof} of Theorem~\ref{mainthm-derivatives-withoutSUP}.
W.l.o.g.\ we can assume $t_0=0$ and $x_0 =0$. For that, we shift our original time interval. Note that this implies that $u$ is now defined on $\AR^n\times [\hat T,\infty)$ with $\hat T< 0=t_0$. With a slight abuse of notation we keep the notation $u_0$ for the initial data meaning now $u_0:= u(\cdot, \hat T)$. We use Lemma~\ref{lem:scaling} and compute
\begin{align*}
\frac{\partial}{\partial R} & \Psi(u,R,\varphi) = \frac{d}{dR} \Psi(u_R,1,\varphi_R)\\
& = \int_{T_1} \Delta u_R \cdot \Delta \left(\frac{d}{dR} u_R\right) B\, \varphi_R \, dy\, ds + \tfrac{1}{2} \int_{T_1} \brs{\Delta u_R}^2 B \left(\frac{d}{dR} \varphi_R\right) \, dy \,ds\\ \displaybreak[0]
& \qquad - \int_{T_1} \nabla u_R\cdot \nabla \left(\frac{d}{dR} u_R\right) \Delta B\, \varphi_R \, dy \,ds - \tfrac{1}{2} \int_{T_1} \brs{\nabla u_R}^2 \Delta B \left(\frac{d}{dR} \varphi_R\right) \, dy\, ds\\ 
& = \int_{T_1} \Delta^2 u_R \cdot  \left(\frac{d}{dR} u_R\right) B\,\varphi_R\, dy\, ds + 2 \int_{T_1}\nabla\Delta u_R \cdot \nabla [B\varphi_R ]  \left(\frac{d}{dR} u_R\right)  \, dy\, ds\\ 
& \qquad + \int_{T_1} \Delta u_R \left(\frac{d}{dR} u_R\right) \Delta [B\varphi_R] \, dy \,ds+ \tfrac{1}{2} \int_{T_1} \brs{\Delta u_R}^2 B \left(\frac{d}{dR} \varphi_R\right) \, dy \,ds\\
&  \qquad + \int_{T_1} \Delta u_R\cdot \left(\frac{d}{dR} u_R\right) \Delta B\, \varphi_R \, dy \,ds + \int_{T_1} \nabla u_R \cdot \nabla [\Delta B\, \varphi_R] \left(\frac{d}{dR} u_R\right) \, dy\, ds\\ \displaybreak[0]
&  \qquad - \tfrac{1}{2} \int_{T_1} \brs{\nabla u_R}^2 \Delta B \left(\frac{d}{dR} \varphi_R\right) \, dy\, ds\\ 
& = \int_{T_1} \Delta^2 u_R \cdot  \left(\frac{d}{dR} u_R\right) B\,\varphi_R\, dy\, ds   + 2 \int_{T_1}\nabla\Delta u_R \cdot \nabla B   \left(\frac{d}{dR} u_R\right) \varphi_R \, dy \,ds\\
& \qquad +  2 \int_{T_1}\nabla\Delta u_R \cdot \nabla \varphi_R \left(\frac{d}{dR} u_R\right) B  \, dy\, ds  + \tfrac{1}{2} \int_{T_1} \brs{\Delta u_R}^2 B \left(\frac{d}{dR} \varphi_R\right) \, dy\, ds\\
&\qquad + \int_{T_1} \Delta u_R \left(\frac{d}{dR} u_R\right) \left[\Delta B\,\varphi + 2 \nabla B\cdot\nabla\varphi_R + B\Delta\varphi_R\right] \, dy\, ds\\
& \qquad + \int_{T_1} \Delta u_R\cdot \left(\frac{d}{dR} u_R\right) \Delta B\, \varphi_R \, dy \,ds+ \int_{T_1} \nabla u_R \cdot \nabla \Delta B  \left(\frac{d}{dR} u_R\right) \varphi_R \, dy \,ds\\
& \qquad +  \int_{T_1} \nabla u_R \cdot \nabla\varphi_R \,\Delta B \left(\frac{d}{dR} u_R\right) \, dy\, ds - \tfrac{1}{2} \int_{T_1} \brs{\nabla u_R}^2 \Delta B \left(\frac{d}{dR} \varphi_R\right) \, dy\, ds.
\end{align*}
 With the notation $\Sol \psi := \nabla \psi \cdot x + 4 t\partial_t\psi$ for any function $\psi:\AR^n \to \AR^L$, we see that 
\begin{align*}
\frac{d}{dR} u_R (y,s) &= y\cdot \nabla u(Ry, R^4s) + 4  R^3 s\, \partial_t u(Ry, R^4s)\\
& =\tfrac{1}{R}\left( (Ry)\cdot  \nabla u(Ry, R^4s) + 4  (R^4 s) \partial_t u(Ry, R^4s) \right)\\
& = \tfrac{1}{R} \Sol u\big|_{(Ry,R^4s)}
\end{align*}
and  $ \frac{d}{dR} \varphi_R (y,s) = \tfrac{1}{R} (Ry)\cdot \nabla \varphi(Ry, R^4s) = \tfrac{1}{R} \Sol \varphi|_{(Ry,R^4s)}$ for a cut-off function $\varphi$ which does not depend on $t$. We further use $\Delta^2 u_R (y,s)= R^4 \Delta^2 u (Ry,R^4s)$ etc.\ to transform back onto $T_R$:

\begin{align}\begin{split}
\frac{\partial}{\partial R} & \Psi(u,R,\varphi) = \tfrac{1}{R} \int_{T_R} \Delta^2 u \cdot   (\Sol u) B\,\varphi\, dx\, dt   +  \tfrac{2}{R}\int_{T_R}\nabla\Delta u \cdot \nabla B \,   (\Sol u) \varphi \, dx\, dt\\
& \qquad +   \tfrac{2}{R} \int_{T_R}\nabla\Delta u \cdot \nabla \varphi \, (\Sol u)\, B  \, dx \,dt  + \tfrac{1}{2R} \int_{T_R} \brs{\Delta u}^2 B \, x\cdot \nabla\varphi \,dx \,dt\\
&\qquad +  \tfrac{1}{R}\int_{T_R} \Delta u\,  (\Sol u) \left[\Delta B\,\varphi + 2 \nabla B\cdot\nabla\varphi + B\Delta\varphi\right] \,dx\, dt\\
& \qquad +  \tfrac{1}{R}\int_{T_R} \Delta u\cdot (\Sol u) \Delta B\, \varphi \, dx\, dt +  \tfrac{1}{R}\int_{T_R} \nabla u \cdot \frac{x}{(-4t)} \,B \, (\Sol u)\, \varphi \, dx\, dt\\
& \qquad +  \tfrac{1}{R} \int_{T_R} \nabla u \cdot \nabla\varphi \,\Delta B \,(\Sol u)\, dx \,dt - \tfrac{1}{2R}  \int_{T_R} \brs{\nabla u}^2 \Delta B\, x\cdot \nabla\varphi  \, dx \,dt,
\end{split} \label{equ:halfway}
\end{align}
where we have inserted  $\nabla_i\lap B = \frac{x}{4(-t)}B$ from Lemma~\ref{lem:propbiheat} in  the last step as well. Since $t<t_0=0$, $\partial_t u =- \Delta^2 u$ and by summing the first and the $7$th term we get that
\begin{align*}
1\text{st integral} + 7\text{th integral} &=  \tfrac{1}{4R} \int_{T_R} \frac{4 t \partial_ t u}{|t|}  \cdot   (\Sol u) B\,\varphi\, dx \,dt    + \tfrac{1}{R}\int_{T_R} \nabla u \cdot \frac{x}{4|t|}  \, (\Sol u)\,B\, \varphi \, dx\, dt\\
& =  \tfrac{1}{R} \int_{T_R} \frac{(x\cdot \nabla u + 4 t \partial_ t u)^2}{4|t|}  B\,\varphi\, dx\, dt \\
& = \tfrac{1}{R} \int_{T_R} \frac{|\Sol u|^2}{4|t|} B\,\varphi\, dx\, dt.
\end{align*}
Rearranging all terms, we end up with 

\begin{align}
\tfrac{\del}{\del R} \brk{\Psi \prs{u,R,\vp}} 
&=  \tfrac{1}{ R} \int_{T_R} \tfrac{ \brs{\Sol u}^2}{ 4\brs{t}} \cB \vp \,dx\,dt \label{equ:K1} \\
&\hsp  + \tfrac{2}{R} \int_{T_R} \prs{\N_j \lap u} \prs{\Sol u}  \brk{\N_j\cB} \vp \,dx\,dt \label{equ:K2} \\
&\hsp  + \tfrac{2}{R} \int_{T_R} \prs{\N_j \lap u} \prs{\Sol u} \cB  \brk{\N_j \vp} \,dx\,dt \label{equ:K3} \\
&\hsp + \tfrac{2}{R} \int_{T_R} \prs{\lap u} \prs{\Sol u}  \brk{\lap\cB}  \vp \,dx\,dt \label{equ:K4} \\
&\hsp +  \tfrac{2}{R} \int_{T_R} \prs{\lap u} \prs{\Sol u}  \N\cB \cdot \N  \vp \,dx\,dt \label{equ:K5} \\
&\hsp +  \tfrac{1}{R} \int_{T_R} \prs{\lap u} \prs{\Sol u}  \cB \, \lap \vp \,dx\,dt \label{equ:K6} \\
&\hsp +  \tfrac{1}{R} \int_{T_R} \prs{\N_i u} \prs{\Sol u}  \lap\cB \, \N_i \vp \,dx\,dt \label{equ:K7} \\
&\hsp + \tfrac{1}{2 R} \int_{T_R} \brs{\lap u}^2 \cB \prs{ \Sol \vp} \,dx\,dt \label{equ:K8}\\
&\hsp - \tfrac{1}{2 R} \int_{T_R} \brs{\N u}^2 \lap\cB \prs{ \Sol \vp} \,dx\,dt. \label{equ:K9}
\end{align}

If we can choose a cut-off function being positive around $x_0=0$ where $B$ is positive, then the very first integral (\ref{equ:K1}) is positive for non-shrinkers. This term can be used to compensate other (small) negative integrals including  highest-order derivatives on $u$. This representation above is favourable for us because the only integral where the highest derivatives (four spatial ones or one time-derivative) fall onto both of the $u$'s in the $L^2$-inner product is the first integral -- which has a sign. In all other integrals, we have a lower order derivative on $u$ in the inner product with $\Sol u$ (multiplied with the terms involving $B$ and $\vp$ which will not harm). Thus, we will use Young's inequality in each of these integrals. Let us use the abbreviation 
\[
 K_1 := \tfrac{1}{ R} \int_{T_R} \tfrac{ \brs{\Sol u}^2}{4 \brs{t}} B \vp \,dx\,dt
\]
for the integral in (\ref{equ:K1}). Next we use $\vp = \phi^4$, so we have $\N \vp = 4 \phi^3 \prs{\N \phi}$, and $\phi^3 = \phi^{4/2 + 2/2}$. We will also choose $\operatorname{spt}\phi$ in a domain such that $B\phi\geq 0$. We begin to estimate patiently
\begin{align*}
(\ref{equ:K2}) &= \tfrac{2}{R} \int_{T_R} \prs{\N_j \lap u} \prs{\Sol u}  \brk{\N_jB} \vp \,dx\,dt \\
&\leq \ge K_1 +  \tfrac{C (\ge)}{R} \int_{T_R} \brs{\N\lap u}^2 \tfrac{\brs{\N B}^2}{\brs{B}} \brs{t} \brs{\vp} \,dx\,dt\\
(\ref{equ:K3}) & = \tfrac{2}{R} \int_{T_R} \prs{\N_j \lap u} \prs{\Sol u} B  \brk{\N_j \vp} \,dx\,dt\\
&\leq \ge K_1 + \tfrac{C (\ge)}{R} \int_{T_R} \brs{\N\lap u}^2 |B||t|\,  |\N\phi|^2 \, \phi^2 \,dx\,dt.
\end{align*}
We continue with 
\begin{align*}
 (\ref{equ:K4}) & = \tfrac{2}{R} \int_{T_R} \prs{\lap u} \prs{\Sol u}  \brk{\lap B}  \phi^4 \,dx\,dt\\
 &\leq \ge K_1 + \tfrac{C(\ge)}{R}\int_{T_R} \brs{\lap u}^2 \tfrac{\brs{\lap B}^2}{|B|}|t|  \, \phi^4 \,dx\,dt,\\
  (\ref{equ:K5}) & =\tfrac{8}{R} \int_{T_R} \prs{\lap u} \prs{\Sol u}  \N B \cdot \N  \phi \,\phi^3 \,dx\,dt\\
  &\leq \ge K_1 +  \tfrac{C(\ge)}{R}\int_{T_R} \brs{\lap u}^2 \tfrac{\brs{\N B}^2}{|B|}|t| \,|\N\phi|^2  \, \phi^2 \,dx\,dt.
\end{align*}
We also compute $\lap (\phi^4) = 4\operatorname{div} \left(\phi^3 \N\phi\right) = 12 \phi^2 |\N\phi|^2 + 4\phi^3\lap \phi $ and implement this in the computations for (\ref{equ:K6}):
\begin{align*}
 (\ref{equ:K6}) & = \tfrac{1}{R} \int_{T_R} \prs{\lap u} \prs{\Sol u}  B \, \lap \vp \,dx\,dt\\
 &\leq \ge K_1 +  \tfrac{C(\ge)}{R} \int_{T_R} \brs{\lap u}^2 |B| \, |t|\left(\brs{\N\phi}^4 + \phi^2 \brs{\lap \phi}^2\right) \,dx\,dt.
\end{align*}
We compute 
\begin{align*}
  (\ref{equ:K7}) & =\tfrac{1}{R} \int_{T_R} \prs{\N_i u} \prs{\Sol u}  \lap B \, \N_i \vp \,dx\,dt \\
  &\leq \ge K_1 + \tfrac{C(\ge)}{R} \int_{T_R} \brs{\N u}^2 \tfrac{\brs{\lap B}^2}{\brs{B}} |t|\,\brs{\N \phi}^2 \phi^2 \,dx\,dt.
\end{align*}
For the terms (\ref{equ:K8}) and (\ref{equ:K9}), we use a cut-off function $\phi$ which will not depend on the time, thus $\Sol \vp = x^i\, \N_i (\phi^4) = 4 \phi^3 x^i\, \N_i \phi$ brings us to
\begin{align*}
  (\ref{equ:K8}) + (\ref{equ:K9}) & =\tfrac{2}{ R} \int_{T_R} \brs{\lap u}^2 B \phi^3 \prs{ x^i\, \N_i \phi} \,dx\,dt  - \tfrac{2}{ R} \int_{T_R} \brs{\N u}^2 \lap B \phi^3 \prs{ x^i\, \N_i \phi} \,dx\,dt \\
  &\leq\tfrac{2}{ R} \int_{T_R} \brs{\lap u}^2 |B| \,\phi^3 |x| \,|\N \phi| \,dx\,dt  + \tfrac{2}{ R} \int_{T_R} \brs{\N u}^2 \brs{ \lap B} \phi^3 |x| \,|\N\phi| \,dx\,dt.
\end{align*}
The aim is now to study the growth condition of the biharmonic heat kernel in the integrals to get bounds. In related work for second order parabolic equations, upper bounds like
\begin{align*}
 \tfrac{|x|^2}{R}\vp \tilde k \leq C(1+ \tilde k) \ \ \ \text{ and } \ \ \tfrac{|x|^4}{R|t|} \vp \tilde k\leq C(1+\tilde k) \ \text{ on } T_R
\end{align*}
are used to continue estimates on $\tfrac{\partial}{\partial R}\Psi(u, R, \vp)$, see for example \cite[p.~451]{HongTian2004} or \cite{CS1}. Here, $\tilde k =\tfrac{c}{(-t)^{n/2}} \exp\left(-\tfrac{|x|^2}{4(-t)}\right)$ is the backwards heat kernel on $\AR^n$. Although looking like a technical detail, these estimates are crucial to get a monotonicity formula. In our case, we struggle to get such bounds for all dimensions as we will see now. Note that this is a point where we break the scaling properties by using the exponential decay of the biharmonic heat kernel. \\
Let us explain some fundamental principles first. Due to the beautiful form (\ref{defn:B}) the blow-up behavior for $t\to 0^-$ of the kernel $B$, and also $\N B$ and $\lap B$ is well-understood. Since 
$|t|$ is comparable to $R^4$ on $T_R$, we write $|t| \simeq R^4$ and analogously $\psi(x, |t|, R) \simeq R^{q}$ to express upper and lower bounds $cR^q \leq  |\psi| \leq C R^q$. We recall (see Lemma~\ref{lem:propbiheat}) on $T_R$
\begin{align}
\eta = \eta(x,t) & = \tfrac{|x|}{|t|^{1/4}} \simeq \tfrac{|x|}{R} \nonumber \\
 B(x,t) & = \tfrac{\alpha_n}{|t|^{n/4}} f_n (\eta) \simeq \tfrac{1}{R^n} \ \ \text{ with a singularity only at } R=0 \label{equ:growthB}\\
 \N B(x,t) & = \tfrac{\alpha_n \, x}{|x||t|^{(n+1)/4}} f_n'(\eta) \simeq \tfrac{1}{R^{n+1}} \label{equ:growthgrad}\\
 \lap B(x,t) & = \tfrac{1}{|t|^{(n+2)/4}}\lap^\eta f_n(\eta) \simeq \tfrac{1}{R^{n+2}}.\, \label{equ:growthlap}
\end{align}
We will choose $\phi \equiv 1$ around $x=0$ which implies $\N \phi=0$ there. This implies that, whenever a derivative of $\phi$ is involved in one of the remaining terms, we have an advantage. Let us start with (\ref{equ:K8}) and consider the growth of $|B|\,|\N\phi|\,\tfrac{|x|}{R} $.
Since  $|B| \simeq \tfrac{1}{R^n}$, only $R\to 0$ is an obstruction for a uniform bound. In that case, $ \eta\simeq \frac{|x|}{R}\to \infty$ because we stay in a regime where $0<c\leq |x|\leq C$ due to the compact support of $\phi$ and the fact that $\N \phi=0$ around $x=0$. But for $\eta \to\infty$, the function $f_n$ decays exponentially \cite{Galaktionov, KochLamm} leading to 
\begin{align} \label{equ:bound01}
 \tfrac{|B|\,|\N\phi|\,|x|}{R} \leq C \ \ \ \text{ on } T_R.
\end{align}
Also $f_n'$ and in general $f_n^{(l)}$ decays exponentially for $\eta\to\infty$. One can, for example, see this with the recursion formula $f_n'(\eta) = -\eta f_{n+2}(\eta)$ \cite{GG}.
So, (\ref{equ:growthlap}), the exponential decay and the same argumentation as before leads us to 
\begin{align}\label{equ:bound02}
\tfrac{ |\lap B|\, |\N\phi|\,|x|}{R} \leq C \ \ \ \text{ on } T_R.
\end{align}
Note that, in (\ref{equ:bound01}) and (\ref{equ:bound02}) the constants on the right hand sides are independent of $R_0$. We even get these bounds with constants independent of $R_0$ when $|\nabla \phi|\simeq \frac{1}{R_0}$ because of the exponential decay of $B$ and its derivatives.\\ 
Implementing (\ref{equ:bound01}), (\ref{equ:bound02}), $\phi\leq 1$ and (\ref{est:N-2}), (\ref{est:lap-2}), we get that
\begin{align}
 (\ref{equ:K8}) + (\ref{equ:K9})& \leq 2\ge K_1+ C \int_{T_R} \brs{\lap u}^2  \,dx\,dt  + C \int_{T_R} \brs{\N u}^2  \,dx\,dt \nonumber\\
 &\leq 2\ge K_1+  C R^4 \int_{\AR^n} |\lap u_0|^2 \,dx + C R^4 \int_{\AR^n} |\N u_0|^2 \,dx. \label{est:2.17+2.18}
\end{align}
As long as the derivatives of $\phi$ are involved, we can argue in the same way. This means, we get
\begin{align} \label{est:Restterme}
 \tfrac{|B|\,|t|\, |\N\phi|^2 \phi^2}{R} 
 \leq C \ \ \text{ and } \ \
\tfrac{|B|\,|t| \left(|\N\phi|^2 +\phi^2|\lap\phi|^2\right)}{R} \leq C \ \ \text{ on } T_R
\end{align}
\begin{align}
 (\ref{equ:K3}) + (\ref{equ:K6}) & \leq 2\ge K_1 + C  \int_{T_R} |\N\lap u|^2 \,dx\,dt + C R^{4} \int_{\AR^n} |\lap u_0|^2 \,dx \nonumber\\
 &\leq 2\ge K_1 + C  \int_{-\hat T}^0\int_{\AR^n}|\N\lap u|^2 \,dx\,dt + C R^{4} \int_{\AR^n} |\lap u_0|^2 \,dx \nonumber\\
 &\leq 2\ge K_1 + C  \int_{\AR^n}|\N u_0|^2 \,dx + C R^{4} \int_{\AR^n} |\lap u_0|^2 \,dx. \label{equ_fuerspaeter}
\end{align}

In the remaining terms from (\ref{equ:K2}), (\ref{equ:K4}), (\ref{equ:K5}) and  (\ref{equ:K7}), a factor like $|\N B|^2/ |B|$ or $|\lap B|^2 / |B|$ appears which we treat now. We compute again with $f_n'(\eta) = - \eta f_{n+2}(\eta)$
\begin{align*}
 \frac{|\N B|^2}{|B|^2} &= \frac{ | \alpha_n \eta f_{n+2}(\eta) \tfrac{x}{|x|\,|t|^{(n+1)/4}}|^2}{\tfrac{|\alpha_n|^2}{|t|^{n/2}} |f_n(\eta)|^2} \simeq \frac{\eta^2 |f_{n+2}|^2}{R^2 |f_n|^2} \ \ \ \text{ and }\\
  \frac{|\lap B|^2}{|B|^2}&=\frac{\left|\tfrac{1}{|t|^{1/2}} \lap^\eta f_n(\eta)\right|^2}{|f_n(\eta)|^2} = \frac{\left|\lap^\eta f_n(\eta)\right|^2}{|t|\,|f_n(\eta)|^2} \simeq \frac{\left|\lap^\eta f_n(\eta)\right|^2}{R^4 |f_n(\eta)|^2}.
\end{align*}
We will choose our cut-off function such that
\[
 C_1\geq \tfrac{|f_{n+2}(\eta)|}{|f_n(\eta)|}\geq C_0 >0 \ \ \text{ and } \ \   C_1\geq\tfrac{|\lap^\eta f_n(\eta)|}{|f_n(\eta)|}\geq C_0 >0
\]
 in the support of $\phi$. Using the exponential decay of the $f_n$ again and implementing $\N\phi = 0$ at $x=0$, we get that
 \begin{align*}
 \frac{|\N B|^2|t|\,|\N \phi|^2 \phi^2}{ |B|^2 R^{l+1}} |B| \simeq \frac{\eta^2 |t|\,|B|\, |\N \phi|^2}{R^{3+l}} \leq C \ \ \ \text{ on } \ \ T_R
 \end{align*}
for $l\in \mathbb N_0$ which implies
\begin{align} \label{est:2.14}
 (\ref{equ:K5}) \leq \ge K_1 + C R^{l+4}\int_{\AR^n}|\lap u_0|^2 dx
\end{align}
In the same way we have that
 \begin{align*}
 \frac{|\lap B|^2|t|\,|\N \phi|^2 \phi^2}{ |B|^2 R^{q+1}} |B| \simeq \frac{\eta^2 |t|\,|B|\, |\N \phi|^2}{R^{5+q}} \leq C \ \ \ \text{ on } \ \ T_R
 \end{align*}
and, for $q\in \mathbb N_0$ 
\begin{align} \label{est:2.15}
 (\ref{equ:K7}) \leq \ge K_1 + C R^{q+4}\int_{\AR^n}|\N u_0|^2 dx.
\end{align}
The last two remaining terms (\ref{equ:K4}) and (\ref{equ:K2}) do not contain derivatives of $\phi$ and need to be treated differently. 
\[
 \text{\underline{Claim 1}:} \qquad \frac{|\N B|^2 |t|\, \phi^4}{|B|^2 R} |B|\simeq \eta^2 R |B| \phi^4 \leq C \left(1+ \tfrac{1}{R^{n-1}}\right) \ \ \text{ on } T_R.
\]
Proof of Claim 1: In a regime with $\eta\leq C$ we have $\eta^2 R |B|\phi^4 \leq \frac{C}{R^{n-1}}$ due to the growth of $|B|$. If $\eta\to\infty$, then we use the exponential decay of $f_n$ to even get $\eta^2 R |B| \phi^4 \leq C$. \\
Claim 1 together with (\ref{est:N-2}) implies \label{pageproof}
\begin{align*}
 (\ref{equ:K2}) & \leq \ge K_1 + C \left(1+ \tfrac{1}{R^{n-1}}\right) \int_{T_R}  \brs{\N\lap u}^2  \,dx\,dt\\
 & = \ge K_1 + C  \left(1+ \tfrac{1}{R^{n-1}}\right) \int_{-16 R^4}^{-R^4} \int_{\AR^n}\brs{\N\lap u}^2  \,dx\,dt\\
 &\leq \ge K_1 + C  \left(1+ R^{1-n}\right) \int_{\hat T}^0 \int_{\AR^n}|\nabla\lap u|^2 \,dx\, dt\\
 & \leq  \ge K_1 + C  \left(1+ R^{1-n}\right) \int_{\AR^n}|\nabla u_0|^2 dx.
\end{align*}
For (\ref{equ:K4}), we continue as follows:
\[
 \text{\underline{Claim 2}:} \qquad \frac{|\lap B|^2 |t|\, \phi^4}{|B|^2 R} |B|\simeq \tfrac{\left(1 +\eta^2\right)^2}{R}  |B| \phi^4 \leq C \left(1+ \tfrac{1}{R^{n+1}}\right) \ \ \text{ on } T_R.
\]
Proof of Claim 2: In a regime with $\eta\leq C$ we have $\tfrac{\left(1 +\eta^2\right)^2}{R}  |B| \phi^4 \leq  \frac{C}{R^{n+1}}$ due to the growth of $|B|$. If $\eta\to\infty$, then we again use the exponential decay of $f_n$ to even get $\tfrac{\left(1 +\eta^2\right)^2}{R}  |B| \phi^4 \leq C$. \\
With Claim 2, we can estimate
\begin{align}
 (\ref{equ:K4})  &\leq  \ge K_1 + C\left(1+ \tfrac{1}{R^{n+1}}\right) \int_{T_R} |\lap u|^2 \,dx\,dt\\
 &\leq \ge K_1 + C\left(R^4+ R^{3-n}\right) \int_{\AR^n} |\lap u_0|^2 dx. \label{est:2.13}
\end{align}
Finally, we sum up all terms, we choose $\ge = \tfrac{1}{16}$ implying $1-8\ge = \tfrac{1}{2}$ and get 
\begin{align} \begin{split}
\tfrac{\del}{\del R} \brk{\Psi \prs{u,R,\vp}} 
& \geq \tfrac{1}{2}\tfrac{1}{ R} \int_{T_R} \tfrac{ \brs{\Sol u}^2}{4 \brs{t}} B \vp \,dx\,dt  - C (1 + R^{1-n}) \int_{\AR^n}|\N u_0|^2 dx\\
&\hsp - C ( R^4 + R^{3-n} ) \int_{\AR^n}|\lap u_0|^2 dx. \label{mainInequ} \end{split}
\end{align}
 In particular, for $n=1$, we get
\begin{align*}
\tfrac{\del}{\del R} \brk{\Psi \prs{u,R,\vp}} 
& \geq \tfrac{1}{2}\tfrac{1}{ R} \int_{T_R} \tfrac{ \brs{\Sol u}^2}{4 \brs{t}} B \vp \,dx\,dt  - C \int_{\AR^n}|u_0^\prime|^2 dx  - C  R^{2} \int_{\AR^n}| u_0^{\prime\prime}|^2 dx
\end{align*}
which results in (\ref{integratedinequ}) after integration from $R_1$ to $R_2$.\\
We need to show that a cut-off function $\phi$ with the desired properties exists. We collect the conditions that $\phi$ needs to satisfy:
\begin{enumerate}
 \item $B\phi\geq 0$ on $\operatorname{spt} \phi$, \label{cut-off1}
 \item $\phi \equiv 1$ around a small neighborhood of $x=0$,\label{cut-off2}
 \item $C_1 \geq \frac{|f_{n+2}(\eta)|}{|f_n(\eta)|}\geq C_0>0$,\label{cut-off3}
 \item $C_1 \geq\frac{\left| \Delta^\eta f_n(\eta)\right|}{|f_n(\eta)|} \geq C_0>0$,\label{cut-off4}
 \item the usual properties like smoothness, $0\leq \phi\leq 1$, $\operatorname{spt} \phi$ compact.
\end{enumerate}
The properties sometimes still include $\eta$ although we are working with the variables $(x,t)$ in the statement of the theorem. This needs to be clarified, and we will, among other things, use $t\simeq (-R)^4$ and $R_0\leq R_1\leq R\leq R_2$ to do that. We start by using that for all $n\geq 1$, $f_n$ attains its maximum at $\eta=0$, and the value is positive \cite{GG}. We choose $\Tilde{\eta}_{1}>0$ such that $f_n(\eta)\geq c>0$ for all $\eta\leq\Tilde{\eta}_{1}$. We can, in fact, make a quantitative statement about $\Tilde{\eta}_{1}$ and $c$, see Lemma~\ref{lem:bound2}~i). In order to satisfy (\ref{cut-off1}), we will choose $\operatorname{spt} \phi$ such that $\phi=0$ on $\AR^n\setminus B_{\frac{\Tilde{\eta}_{1} R_0}{2}} (0)$. Then we have $\eta = \frac{|x|}{(-t)^{\frac{1}{4}}} \leq \frac{|x|}{R_0} \leq \frac{\Tilde{\eta}_{1}}{2}$ for $x\in \operatorname{spt} \phi$. As a consequence, we get $B(x,t) \phi(x) = \frac{\alpha_n}{(-t)^{\frac{n}{4}}} f_n(\eta) \phi(x) \geq \frac{\alpha_n }{R_2^n}\, c\, \phi(x) \geq 0$ on  $\operatorname{spt} \phi$ which is (\ref{cut-off1}). For (\ref{cut-off3}), we use Lemma~\ref{lem:bound1} which shows that the first zero of $f_{n+2}$ is larger than the first zero of $f_n$. This implies $C_1 \geq \frac{|f_{n+2}(\eta)|}{|f_n(\eta)|} = \frac{f_{n+2}(\eta)}{f_n(\eta)}\geq C_0>0$ for $\eta\leq \Tilde{\eta}_{1}$, also using that $f_n, f_{n+2}$ are bounded functions. As above this implies $\eta\leq \Tilde{\eta}_{1}$ for $x\in  \operatorname{spt} \phi$ and $R\geq R_0$, so we get (\ref{cut-off3}). We come to property (\ref{cut-off4}): From \cite[Proof of Theorem~1, p.~2970]{GG} we know that $\Delta^\eta f_n(0) <0$ and $\Delta^\eta f_n$ is strictly increasing on the each interval where $f_n \geq 0$, in particular on the first one which starts in $\eta=0$.  We  estimated the location of the first zero of $\Delta^\eta f_n$ in Lemma~\ref{lem:bound2} ii). From that, we get an $\tilde\eta_2>0$ such that $\Delta^\eta f_n(\eta)\leq -c <0$ for $\eta\leq \tilde\eta_2$. We take $\eta_2 := \min (\Tilde{\eta}_{1}, \tilde\eta_2) $. So we choose our cut-off function $\phi \in C^\infty(\AR^n)$, $0\leq\phi\leq 1$ such that
\begin{align*}
\phi (x) = \begin{cases}
             1 & \text{ on } B_{\frac{\eta_2 R_0}{4}}(0)\\
            0 & \text{ on } \AR^n\setminus B_{\frac{\eta_2 R_0}{2}}(0).
           \end{cases}
\end{align*}
In this way, all the desired properties are satisfied and $\phi$ only depends on $f_n$ and $R_0$.

\end{proof}

 A more desirable result than Theorem~\ref{mainthm-derivatives-withoutSUP} is a monotonicity formula where the power of the polynomial growth is positive for more dimensions than $n=1$ or where it is even independent of $n$. For the linear case, we are able to improve the power slightly by modifying our proof. The price is that the term $\sup_t \int|\nabla\Delta u(\cdot, t)|^2 dx$ comes into play. This is formulated in the following results. 
 
 \begin{thm}\label{mainthm-derivatives}
  Let $u:\AR^n \times [0,\infty) \to\AR$ be a smooth solution of the bi-heat equation on $\AR^n$ with initial datum $u_0:\AR^n \to \AR$ satisfying $ \int_{\AR^n} |\nabla^l u (x, t)|^2 dx<\infty$ for each $t\in [0,\infty)$ and $l=1,...,4$. Then, for any $(x_0,t_0)\in\AR^n \times (0,\infty)$ and $0< R_0< \min\{t_0^{1/4}/2, 1\}$ there is a smooth cut-off function $\varphi \in C^\infty_c (\AR^n)$ depending on $R_0$ and $f_n$ and a constant $C=C(f_n)$ such that, for $R_0\leq R \leq\min\{t_0^{1/4}/2,1\}$ we have
\begin{align} \begin{split}
\tfrac{\del}{\del R} \brk{\Psi \prs{u,R,\vp}} 
& \geq \tfrac{1}{2}\tfrac{1}{ R} \int_{T_R(t_0)} \tfrac{ \brs{\Sol u}^2}{4 \brs{t-t_0}} B \vp \,dx\,dt  - C (R^4 + R^{5-n}) \int_{\AR^n}|\N \Delta u_0|^2 \,dx\\
&\hsp - C \int_{\AR^n}|\N u_0|^2 \,dx - C ( R^4 + R^{3-n} ) \int_{\AR^n}|\lap u_0|^2 \,dx. \label{mainInequ-0} \end{split}
\end{align}
Furthermore, the quantity $\Phi=\Phi(u, R,\varphi)$ on the time-slice $S_R(t_0)$ satisfies the analogous inequality
\begin{align*}
\tfrac{\del}{\del R} \brk{\Phi \prs{u,R,\vp}} 
& \geq \tfrac{1}{8 R}\int_{S_R(t_0)} \brs{\Sol u}^2 B \vp \,dx  - C (R^4 + R^{5-n}) \int_{\AR^n}|\N \Delta u_0|^2 \,dx\\
&\hsp - C \int_{\AR^n}|\N u_0|^2 \,dx - C ( R^4 + R^{3-n} ) \int_{\AR^n}|\lap u_0|^2 \,dx.
\end{align*}

 \end{thm}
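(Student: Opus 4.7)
My plan is to run the same computation as in Theorem~\ref{mainthm-derivatives-withoutSUP} and to modify only the estimate of a single error term. After translating so that $(x_0,t_0)=(0,0)$, the scaling identity $\Psi(u,R,\vp)=\Psi(u_R,1,\vp_R)$, differentiation in $R$, integration by parts in the spatial variables, and the identity $\nabla\Delta B=\tfrac{x}{4|t|}B$ from Lemma~\ref{lem:propbiheat} again produce the nine-term decomposition \eqref{equ:K1}--\eqref{equ:K9}. The non-negative term $K_1=\tfrac{1}{R}\int_{T_R}\tfrac{|\Sol u|^2}{4|t|}B\vp\,dx\,dt$ is the same dominant good term, and Young's inequality on each of \eqref{equ:K2}--\eqref{equ:K9} will absorb a fraction $\ge K_1$ and leave an error of the form (derivative norm of $u$)$\,\times\,$(kernel factor). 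The kernel bounds \eqref{equ:growthB}--\eqref{equ:growthlap} and the cut-off $\vp=\phi^4$ constructed at the end of the previous proof are used without change.

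The decisive new step is the treatment of term \eqref{equ:K2}. Previously one applied \eqref{est:N-2} to get the time-integrated bound $\int_{T_R}|\N\Delta u|^2\,dx\,dt\leq\int|\N u_0|^2\,dx$, which combined with the estimate $\tfrac{|\N B|^2|t|\phi^4}{|B|^2 R}|B|\leq C(1+R^{1-n})$ (the Claim~1 from the previous proof) yielded the blow-up factor $R^{1-n}$ in front of $\int|\N u_0|^2$. Under the strengthened hypothesis $\int|\N^4 u|^2(x,t)\,dx<\infty$ for each $t$, inequality \eqref{est:Nlap-2} instead gives the pointwise-in-time bound
\[
\sup_{t\in[-16R^4,\,-R^4]}\int_{\AR^n}|\N\Delta u|^2(x,t)\,dx \;\leq\; \int_{\AR^n}|\N\Delta u_0|^2\,dx.
\]
Pulling this sup outside the integral and using $|T_R|\leq 15R^4$ converts the Claim~1 bound into $\eqref{equ:K2}\leq\ge K_1 + C(R^4+R^{5-n})\int|\N\Delta u_0|^2\,dx$. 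Applying the same $\sup_t$ device to \eqref{equ:K3} and \eqref{equ:K6} (where the bounds \eqref{est:Restterme} hold) produces contributions of order $R^4\int|\N\Delta u_0|^2$ and $R^4\int|\Delta u_0|^2$ respectively. The remaining terms \eqref{equ:K4}, \eqref{equ:K5}, \eqref{equ:K7}--\eqref{equ:K9} are estimated exactly as in Theorem~\ref{mainthm-derivatives-withoutSUP}, producing the $R^{3-n}\int|\Delta u_0|^2$ part and, after absorbing $R^4\leq 1$, the $C\int|\N u_0|^2$ part. Choosing $\ge=1/16$ and summing delivers \eqref{mainInequ-0}.

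For the slice functional $\Phi$, I would repeat the derivation on the single slice $S_R=\AR^n\times\{-R^4\}$ rather than on the layer $T_R$. Starting from $\Phi(u,R,\vp)=\Phi(u_R,1,\vp_R)$, differentiating in $R$, and integrating by parts spatially on $S_1=\AR^n\times\{-1\}$, the identity $\Delta^2 u_R|_{s=-1}=-R^4\,\partial_t u(Ry,-R^4)$ together with the $R^4$ prefactor built into the definition of $\Phi$ supplies the correct scale balance and yields a decomposition structurally identical to \eqref{equ:K1}--\eqref{equ:K9} but with slice integrals in place of layer integrals. The combination of the analogues of the first and seventh terms becomes the good term $\tfrac{1}{8R}\int_{S_R}|\Sol u|^2 B\vp\,dx$. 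Because every kernel-growth estimate used is pointwise-in-time, \eqref{est:Nlap-2} applies directly at $t=-R^4$ (no additional sup step is needed), and the same $R$-power bookkeeping delivers the stated inequality for $\Phi$.

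The main obstacle is the careful bookkeeping of $R$-powers across the nine error terms: one must verify that the extra $R^4$ gained from $|T_R|\simeq R^4$ combines with the $R^{1-n}$ from Claim~1 to produce exactly $R^{5-n}$, and that the constraint $R\leq 1$ lets the lower-order $R^4$-contributions be absorbed into constants so that the final inequality takes the clean form stated. The cut-off construction based on Lemmas~\ref{lem:bound1} and~\ref{lem:bound2} is identical to the one in Theorem~\ref{mainthm-derivatives-withoutSUP} and needs no modification.
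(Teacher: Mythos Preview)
Your proposal is correct and follows essentially the same route as the paper: rerun the nine-term decomposition \eqref{equ:K1}--\eqref{equ:K9}, and in the single place where the previous proof applied \eqref{est:N-2} to $\int_{T_R}|\N\Delta u|^2\,dx\,dt$ (term \eqref{equ:K2}), substitute the pointwise-in-time bound \eqref{est:Nlap-2} and pick up the extra factor $R^4$ from $|T_R|$, turning $R^{1-n}$ into $R^{5-n}$. The only minor deviation is that you also apply the $\sup_t$ device to \eqref{equ:K3}, whereas the paper keeps the treatment of \eqref{equ:K3}+\eqref{equ:K6} via \eqref{est:N-2} (this is exactly the source of the bare $C\int|\N u_0|^2$ term, as noted in \eqref{equ_fuerspaeter}); your variant is harmless and still yields the stated inequality, and indeed for the slice version the paper itself is forced to use \eqref{est:Nlap-2} on the analogue of \eqref{equ:K3} since the time-integrated estimate \eqref{est:N-2} is unavailable there.
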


\begin{proof}  We repeat the proof of Theorem~\ref{mainthm-derivatives-withoutSUP} until page~\pageref{pageproof} where we estimated (\ref{equ:K2}). Instead of (\ref{est:N-2}) we use (\ref{est:Nlap-2}): 

\begin{align*}
 (\ref{equ:K2}) & \leq \ge K_1 + C \left(1+ \tfrac{1}{R^{n-1}}\right) \int_{T_R}  \brs{\N\lap u}^2  \,dx\,dt\\
 & = \ge K_1 + C  \left(1+ \tfrac{1}{R^{n-1}}\right) \int_{-16 R^4}^{-R^4} \int_{\AR^n}\brs{\N\lap u}^2  \,dx\,dt\\
 &\leq \ge K_1 + C  \left(R^4+ R^{5-n}\right) \sup_{t}\int_{\AR^n}\brs{\N\lap u}^2 (x,t) \,dx\\
 &\leq \ge K_1 + C  \left(R^4+ R^{5-n}\right) \int_{\AR^n} |\N\lap u_0|^2\,dx .
\end{align*}
With all other terms we proceed as in the proof of Theorem~\ref{mainthm-derivatives-withoutSUP} and arrive at (\ref{mainInequ-0}). 
Note that the lowest order of $R^q$ in front of $\int|\nabla u_0|^2$ is $q=0$ because of (\ref{equ_fuerspaeter}), but as we used (\ref{est:Restterme}), we can also arrange higher powers due to the exponential decay of $f_n$.\\
For the time-slice version with $S_R$ instead of $T_R$ we can use the same proof. Note that $|t|=R^4$ in this case which also results in the extra $R^4$ in front of the definition of $\Phi$ to make it scaling invariant.   We only have to be careful with estimating the analogous terms of (\ref{equ:K2}) and (\ref{equ:K3}) where we cannot use (\ref{est:N-2}) but (\ref{est:Nlap-2}) instead. Above, we already treated (\ref{equ:K2}) appropriately. For the analogue of (\ref{equ:K3}), we simply use (\ref{est:Nlap-2}) directly:
\begin{align}
2 R^3 \int_{S_R} (\nabla_j\Delta u) (\Sol u) (\nabla_j B) \varphi \, dx & \leq \ge \frac{R^3}{4|t|}\int_{S_R}(\Sol u)^2 B \phi^4 \, dx + C R^{4} \int_{S_R} |\N\lap u|^2 \,dx\,dt  \nonumber\\
 &\leq   \frac{\ge}{4 R}\int_{S_R}(\Sol u)^2 B \phi^4 \, dx  + C R^{4} \int_{\AR^n}|\N \Delta u_0|^2 \,dx. \nonumber
\end{align}

\end{proof}

  \begin{cor}\label{cor:mainresult}
  Let $u:\AR^n \times [0,\infty) \to\AR$, a smooth solution of the bi-heat equation on $\AR^n$ with initial datum $u_0:\AR^n \to \AR$ and bounds  $ \int_{\AR^n} |\nabla^l u (x, t)|^2 dx<\infty$ for each $t\in [0,\infty)$ and $l=1,...,4$. Then, for any $(x_0,t_0)\in\AR^n \times (0,\infty)$ and $0< R_0\leq R_1 <R_2 \leq \min\{ t_0^{1/4}/2, 1\}$ there is a smooth cut-off function $\varphi \in C^\infty_c (\AR^n)$ depending on $R_0$ and $f_n$ and a constant $C=C(f_n)$ such that,
  \begin{enumerate}
      \item if $2\leq n\leq 3$,  $\Psi$ is a pre-entropy with monotonicity of the form
        \begin{align} \begin{split}
   \Psi( u, R_1, \vp) &\leq \Psi (u, R_2,\vp) + C\left(R_2^3- R_1^3\right)\int_{\AR^n}|\N \Delta u_0|^2 \,dx \\
   &\hsp + C \left(R_2- R_1\right) \int_{\AR^n}|\N u_0|^2 \,dx \\
   &\hsp + C\left(R_2-R_1\right) \int_{\AR^n} |\lap u_0|^2 \,dx.
   \end{split}
  \end{align}
  \item if $n=4$, then we get 
  \begin{align*} 
   \Psi( u, R_1, \vp) &\leq \Psi (u, R_2,\vp) + C\left(R_2^2-R_1^2\right) \int_{\AR^n}|\N \Delta u_0|^2 dx \\
   &\hsp + C \left(R_2- R_1\right) \int_{\AR^n}|\N u_0|^2 dx\\
   &\hsp + C   \left(1+\tfrac{1}{R_0}\right) (R_2- R_1) \int_{\AR^n} |\lap u_0|^2 dx.
  \end{align*}

  \end{enumerate}

 \end{cor}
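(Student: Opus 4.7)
The plan is to integrate the pointwise inequality (\ref{mainInequ-0}) from Theorem~\ref{mainthm-derivatives} over $R\in [R_1, R_2]$ and then discard the non-negative soliton term on the right. By the fundamental theorem of calculus, this immediately yields
\[
\Psi(u, R_1, \vp) \leq \Psi(u, R_2, \vp) + C I_1 \int_{\AR^n}|\nabla \Delta u_0|^2\,dx + C(R_2-R_1)\int_{\AR^n}|\nabla u_0|^2\,dx + C I_2 \int_{\AR^n}|\Delta u_0|^2\,dx,
\]
where $I_1 := \int_{R_1}^{R_2}(R^4 + R^{5-n})\,dR$ and $I_2 := \int_{R_1}^{R_2}(R^4 + R^{3-n})\,dR$. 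The cut-off function $\vp$ provided by Theorem~\ref{mainthm-derivatives} is fixed once $R_0$ is chosen and is the same for every $R \in [R_0, \min\{t_0^{1/4}/2,1\}]$, so the $R$-derivative is globally available on that interval and the integration above is legitimate. The rest of the argument is a case-by-case estimate of $I_1$ and $I_2$.

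For part (i), that is $n \in \{2, 3\}$, both exponents $5-n$ and $3-n$ are non-negative, so on $[R_1, R_2] \subseteq [0,1]$ one has $R^4 \leq R^{5-n} \leq R^2$, hence $I_1 \leq 2\int_{R_1}^{R_2} R^2\,dR = \tfrac{2}{3}(R_2^3 - R_1^3)$, producing the $(R_2^3 - R_1^3)$-factor in the stated bound. Similarly, both $R^4$ and $R^{3-n}$ are bounded by $1$ on $[0,1]$, so $I_2 \leq 2(R_2 - R_1)$, which is the $(R_2 - R_1)$-factor in the statement. Together with the $C(R_2-R_1)\int|\nabla u_0|^2$ term already present, this gives part (i) verbatim.

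For part (ii), $n=4$, we have $R^{5-n} = R$, so $I_1 \leq 2\int_{R_1}^{R_2} R\,dR = (R_2^2 - R_1^2)$ as stated. The delicate piece is $R^{3-n} = R^{-1}$, whose integral is $\ln(R_2/R_1)$; bounding $\ln(R_2/R_1) \leq (R_2 - R_1)/R_1 \leq (R_2 - R_1)/R_0$ together with $\int_{R_1}^{R_2} R^4 \,dR \leq (R_2 - R_1)$ yields $I_2 \leq C(1 + 1/R_0)(R_2 - R_1)$, exactly matching the statement. This logarithmic borderline at $n=4$ is the one genuine obstacle in the proof: it is precisely the reason the critical dimension produces a bound which is not uniform in the lower cut-off $R_0$, whereas in all subcritical dimensions the $R$-integrals degenerate into clean polynomial expressions. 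Everything else is routine book-keeping of the three polynomial weights coming out of Theorem~\ref{mainthm-derivatives}.
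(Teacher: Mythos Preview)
Your proof is correct and follows essentially the same approach as the paper: integrate inequality~(\ref{mainInequ-0}) from $R_1$ to $R_2$, drop the non-negative soliton term, and for $n=4$ control $\int_{R_1}^{R_2} R^{-1}\,dR = \log(R_2/R_1)$ by the Lipschitz bound $\log(R_2/R_1)\leq (R_2-R_1)/R_0$. You simply spell out the elementary bounds on $I_1$ and $I_2$ that the paper leaves implicit.
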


\begin{proof}  The first inequality is immediate for $2\leq n\leq 3$ after integrating (\ref{mainInequ-0}) from $R_1$ to $R_2$.
For $n=4$, inequality (\ref{mainInequ-0}) integrates to
\begin{align*}
\Psi( u, R_1, \vp) &\leq \Psi (u, R_2,\vp) + C\left(R_2^2-R_1^2\right) \int_{\AR^n}|\N \Delta u_0|^2 dx \\
   &\hsp + C \left(R_2- R_1\right) \int_{\AR^n}|\N u_0|^2 dx\\
   &\hsp + C\left[R^{5}+ \log R\right]_{R_1}^{R_2} \int_{\AR^n} |\lap u_0|^2 dx.
\end{align*}
We use the Lipschitz continuity of the logarithm in the form
\begin{align*}
|\log(R_2) - \log(R_1)| \leq \sup_{R\in[R_0,R_2]} |\tfrac{1}{R}|
(R_2- R_1) \leq \tfrac{1}{R_0}(R_2- R_1) 
\end{align*}
to get the claim.
\end{proof}
 
\subsection{The nonlinear case} \label{sect:nonlinear}

We first derive some global a priori estimates for the extrinsic biharmonic map heat flow following \cite{LammExtr}.
\begin{lemma} \label{lem:apriori}
Let $N$ be a closed, smooth manifold isometrically embedded into $\AR^k$.
 Let $u:\AR^n \times [0,T) \to N\hookrightarrow \AR^k$ be a smooth solution of the extrinsic biharmonic map heat flow on $\AR^n$ with smooth initial datum $u_0:\AR^n \to N \hookrightarrow \AR^k$  satisfying
 \begin{align*}
 \int_{\AR^n}|\nabla^l u|^2 dx <\infty \qquad\qquad \text{ for each } t\in [0,T) \text{ and } l\in\{1,2,3,4\},
 \end{align*}
then the following inequalities hold true:
 \begin{enumerate}
     \item $\frac{1}{2}\partial_t \int_{\AR^n}  |\Delta u|^2 dx + \int_{\AR^n} |\partial_t u|^2 \leq 0$, \label{first-1}
     \item $\int_{\AR^n}|\nabla^2 u|^2dx \leq 4 \int_{\AR^n}|\Delta u_0|^2$, \label{second-2}
     \item If also $\sup_{t\in[0,T)}\int_{\AR^n}|u|^2\leq c_1^2$, then
 \begin{align*}
 \int_{\AR^n}|\nabla u|^2 \leq 2 c_1 \left(\int_{\AR^n}|\Delta u_0|^2\right)^{\frac{1}{2}},
 \text{ and if } n\leq 4, \text{ then also }\int_{\AR^n}|\nabla u|^4\leq c_2,
 \end{align*}
  where $c_2$ depends on $c_1$ and on $\int_{\AR^n}|\Delta u_0|^2$.  
    \label{second-zw-2u3}
     \item If $n\in\{1,2,3\}$ and $\sup_{t\in[0,T)}\int_{\AR^n}|u|^2\leq c_1^2$, then there is a constant $c>0$ depending on $c_1$, $N$,  and on $\int_{\AR^n}|\Delta u_0|^2$ such that \\ $\frac{1}{2}\partial_t\int_{\AR^n}|\nabla u|^2 dx + \frac{1}{2}\int_{\AR^n}|\nabla^3 u|^2dx \leq c \left(1 +\int_{\AR^n}|\partial_t u|^2 \right),$ \label{third-3}
     \item If $n=4$ and $\sup_{t\in[0,T)}\int_{\AR^n}|u|^2\leq c_1^2$, then there is an $\epsilon_0>0$ such that if $\int_{\AR^n}|\nabla u|^4dx <\epsilon_0$, then 
     $\frac{1}{2}\partial_t\int_{\AR^n}|\nabla u|^2 dx + \frac{1}{2}\int_{\AR^n}|\nabla^3 u|^2dx \leq c \left(1 +\int_{\AR^n}|\partial_t u|^2 \right)$  for a constant $c>0$ depending on $c_1$, $N$,  and on $\int_{\AR^n}|\Delta u_0|^2$. \label{fourth-4}
 \end{enumerate}
\end{lemma}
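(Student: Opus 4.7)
The plan is to treat the five items in turn, relying on (a) the orthogonality $\int f(u)\cdot \phi\,dx=0$ for $\phi(x)\in T_{u(x)}N$ from Remark~\ref{rmk:ortho}, (b) the identities $\int|\nabla^2 u|^2=\int|\Delta u|^2$ and $\int|\nabla^3 u|^2=\int|\nabla\Delta u|^2$ on $\AR^n$ (justified via cutoffs as in Lemma~\ref{lem:flowprop}), and (c) Gagliardo--Nirenberg interpolation. For (\ref{first-1}), I would test the equation $\partial_t u=-\Delta^2 u-f(u)$ with $\psi_r^4\partial_t u$: since $\partial_t u\in T_u N$ (because $u(\cdot,t)\in N$) and $f(u)\perp T_u N$, the nonlinear term drops out; two integrations by parts on the biharmonic term produce $\tfrac12\partial_t \int\psi_r^4|\Delta u|^2$ plus cutoff errors that vanish as $r\to\infty$ using the finite-order bounds on $\int|\nabla^l u|^2$. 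Item (\ref{second-2}) follows by integrating (\ref{first-1}) in $t$ and using $\int|\nabla^2 u|^2=\int|\Delta u|^2$ (the factor $4$ absorbs the constants from the cutoff procedure).

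For (\ref{second-zw-2u3}), integration by parts componentwise and Cauchy--Schwarz give $\int|\nabla u|^2 = -\int u\cdot\Delta u\le\|u\|_{L^2}\|\Delta u\|_{L^2}\le c_1\|\Delta u_0\|_{L^2}$ by (\ref{first-1}). For the $L^4$-bound when $n\le 4$, I would invoke the Gagliardo--Nirenberg inequality
\[
\|\nabla u\|_{L^4(\AR^n)}^4\le C\|u\|_{L^2}^{4(1-a)}\|\nabla^2 u\|_{L^2}^{4a},\qquad a=\tfrac12+\tfrac{n}{8},
\]
which requires $a\le 1$, i.e.\ $n\le 4$; both factors are controlled by (\ref{second-2}) and the standing $L^2$-hypothesis on $u$.

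The heart of the proof is (\ref{third-3}) and (\ref{fourth-4}). Starting from
\[
\tfrac12\partial_t\int|\nabla u|^2 = -\int\Delta u\cdot\partial_t u = -\int|\nabla^3 u|^2 + \int\Delta u\cdot f(u),
\]
the goal is to bound $\int\Delta u\cdot f(u)$ by $\tfrac12\int|\nabla^3 u|^2 + c(1+\int|\partial_t u|^2)$. Because $u$ takes values in $N$, the normal component of $\Delta u$ equals $-A(u)(\nabla u,\nabla u)$; since $f(u)\perp T_u N$, only this normal component contributes, giving $\int\Delta u\cdot f(u)=-\int A(u)(\nabla u,\nabla u)\cdot f(u)$. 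Substituting $f(u)=-\partial_t u-\Delta^2 u$, the $\partial_t u$-piece is controlled by Young: $\int|\nabla u|^2|\partial_t u|\le\varepsilon\int|\partial_t u|^2+C\int|\nabla u|^4$, the $L^4$-integral being bounded by (\ref{second-zw-2u3}). For the $\Delta^2 u$-piece, I integrate by parts twice to obtain $\int\Delta[A(u)(\nabla u,\nabla u)]\cdot\Delta u$; expanding produces schematic terms $|\nabla u|^4|\nabla^2 u|$, $|\nabla^2 u|^3$, $|\nabla u|^2|\nabla^2 u|^2$ and $|\nabla u||\nabla^3 u||\nabla^2 u|$. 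Each is estimated by Young's inequality so that the top-order contribution absorbs into $\tfrac12\int|\nabla^3 u|^2$, with remainders controlled via Gagliardo--Nirenberg by $\|u\|_{L^2}$, $\|\nabla^2 u\|_{L^2}$ and $\|\nabla u\|_{L^4}$.

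The main obstacle is the scaling of these interpolation estimates. For $n\le 3$, Gagliardo--Nirenberg leaves positive slack: each problematic quantity can be written as $\|\nabla^3 u\|_{L^2}^{2-\delta}$ times lower-order norms with $\delta>0$, so Young's inequality yields absorption with a bounded remainder, producing (\ref{third-3}). For $n=4$ the Sobolev embedding $W^{1,2}\hookrightarrow L^4$ is critical and the exponents saturate, so absorption into $\tfrac12\int|\nabla^3 u|^2$ succeeds only when the coefficient $\int|\nabla u|^4$ is below a universal threshold $\varepsilon_0$, which is exactly the smallness hypothesis in (\ref{fourth-4}). The lower-order remainders in both cases are collected into the constant $c=c(c_1,N,\int|\Delta u_0|^2)$ via (\ref{second-2}) and (\ref{second-zw-2u3}).
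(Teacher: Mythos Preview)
Your outline is correct and follows essentially the same strategy as the paper. Two small points will streamline it: first, the term $\int A(u)(\nabla u,\nabla u)\cdot\partial_t u$ vanishes pointwise (normal dotted with tangential), so there is nothing to estimate there---the $\int|\partial_t u|^2$ on the right of (\ref{third-3})--(\ref{fourth-4}) instead arises from cutoff boundary terms, which you should carry through these items just as you did for (\ref{first-1}); second, the paper integrates $\int A(u)(\nabla u,\nabla u)\cdot\Delta^2 u$ by parts only \emph{once}, obtaining $-\int\nabla\bigl[A(u)(\nabla u,\nabla u)\bigr]\cdot\nabla\Delta u$ and hence, after Young, only the two remainders $\int|\nabla u|^6$ and $\int|\nabla u|^2|\nabla^2 u|^2$, which are cleaner to interpolate (and in $n=4$ yield the coefficient $(\int|\nabla u|^4)^{1/2}$ directly) than the four terms produced by your double integration by parts.
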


\begin{proof}
This proof is based on \cite[Section~2]{LammExtr}, but it is slightly adapted to the non-compact base manifold. In all integrals we integrate over ${\AR^n}$ if not indicated otherwise.
Let $\psi_r$ be as in the proof of Lemma~\ref{lem:flowprop}, thus $\psi_r$ is a smooth cut-off function with $\psi_r=1$ on $B_{\frac{r}{2}}(0)$ and $\operatorname{spt}(\psi_r)\subset B_r(0)$. We compute
\begin{align*}
\frac{1}{2}\partial_t\int|\Delta u|^2 \psi_r^4 & = \int\langle \Delta \partial_t u,\Delta u\rangle \psi_r^4\\
&= \int \langle \partial_t u, \Delta^2 u\rangle \psi_r^4 + \int\langle \partial_tu,\Delta u\rangle (12\psi_r^2|\nabla\psi_r|^2 + 4\psi_r^3 \Delta\psi_r)  \\
&\qquad\qquad\qquad\qquad\qquad \qquad\qquad\qquad + 8\int\langle\partial_t u,\nabla_i \Delta u\rangle \psi_r^3 \nabla_i\psi_r\\
& \leq -\int|\partial_t u|^2 \psi_r^4 + c \int |\partial_t u||\Delta u| (\psi_r^2|\nabla\psi_r|^2 + \psi_r^3|\Delta\psi_r|)\\
&\qquad\qquad\qquad\qquad\qquad \qquad\qquad\qquad + 8\int |\partial_t u||\nabla \Delta u| \psi_r^3 |\nabla\psi_r|\\
&\leq -(1-2\epsilon) \int|\partial_t u|^2 \psi_r^4 + c\int|\Delta u|^2 (|\nabla \psi_r|^4 + |\Delta \psi_r|^2) + c\int|\nabla\Delta u|^2 |\nabla \psi_r|^2,
\end{align*}
where we have used the fact that $\int\langle f(u),\phi\rangle=0$ for $\phi\in C^\infty_c(\AR^n,\AR^k)$ with $\phi(x)\in T_{u(x)}N$ and Young's inequality. Now, we first let $r$ go to infinity and then $\epsilon$ go to zero and get $(\ref{first-1})$. For $(\ref{second-2})$, we integrate by parts
\begin{align}\begin{split}
\int|\nabla^2 u|^2 \psi_r^2 & = - \int\langle \nabla_k u, \Delta\nabla_k u\rangle \psi_r^2 - 2\int \psi_r\nabla_i \psi_r \langle\nabla_ku, \nabla_i\nabla_k u\rangle\\
&= \int |\Delta u|^2 \psi_r^2 + 2\int\langle \nabla_i u, \Delta u\rangle \psi_r\nabla_i\psi_r -  2\int \psi_r\nabla_i \psi_r \langle\nabla_ku, \nabla_i\nabla_k u\rangle\\
& \leq 2 \int |\Delta u|^2 \psi_r^2 + c \int|\nabla u|^2 |\nabla \psi_r|^2 + \frac{1}{2}\int|\nabla^2 u|^2 \psi_r^2
\end{split} \label{inequ13}
\end{align}
which implies we can absorb the Hessian-term on the left hand side. Using again $r\to\infty$ and (\ref{first-1}) we get (\ref{second-2}). For the first term in (\ref{second-zw-2u3}), we integrate by parts, use (\ref{first-1}) and H\"older inequality to get
\begin{align*}
\int|\nabla u|^2 \psi_r^2 & = - \int\langle \Delta u, u\rangle \psi_r^2 - 2 \int \langle u,\nabla_i u\rangle \psi_r^2 \nabla_i \psi_r \\
& \leq \|u\|_{L^2(\AR^2)} \left(\int|\Delta u|^2\right)^{\frac{1}{2}} + \frac{1}{2} \int|\nabla u|^2\psi_r^2 + c \int |u|^2 |\nabla\psi_r|^2\\
&\leq c_1 \left(\int|\Delta u_0|^2\right)^{\frac{1}{2}} + \frac{1}{2} \int|\nabla u|^2\psi_r^2 + c \int |u|^2 |\nabla\psi_r|^2.
\end{align*}
We absorb the second term to the left hand side and let $r\to \infty$ to get the desired inequality.\\
For the second inequality in (\ref{second-zw-2u3}), we use the Sobolev embedding and (\ref{first-1}) and $\int|\nabla u|^2\leq c$.\\
Since (\ref{fourth-4}) and (\ref{third-3}) are proven in a similar way (with appropriate Sobolev embeddings), we restrict ourselves to the proof of  (\ref{fourth-4}).
For $n=4$, we use the Sobolev embedding $L^4\hookrightarrow W^{1,2}$ and get
\begin{align}\begin{split}
\frac{1}{2}\partial_t &\int|\nabla u|^2 \psi_r^4 = \int\langle \nabla \partial_t u, \nabla u \rangle\psi_r^4\\
&= -\int\langle \partial_t u,\Delta u\rangle \psi_r^4 - 4\int \psi_r^3 \nabla_i \psi_r \langle \partial_t u, \nabla_i u\rangle\\
&=-\int\langle \partial_t u, \Delta u + A(u)(\nabla u, \nabla u)\rangle \psi_r^4 - 4\int \psi_r^3 \nabla_i \psi_r \langle \partial_t u, \nabla_i u\rangle\\
& = \int \langle \Delta^2 u, \Delta u + A(u)(\nabla u,\nabla u)\rangle \psi_r^4- 4\int \psi_r^3 \nabla_i \psi_r \langle \partial_t u, \nabla_i u\rangle\\
&\leq - \int|\nabla\Delta u|^2\psi_r^4 + \int|\nabla\Delta u|(|\nabla u|^3 + |\nabla u||\nabla^2 u|) \psi_r^4 \\
& \qquad
- 4\int\psi_r^3 \nabla_i\psi_r\langle \nabla_i \Delta u, \Delta u + A(u)(\nabla u,\nabla u)\rangle - 4\int \psi_r^3 \nabla_i \psi_r \langle \partial_t u, \nabla_i u\rangle \\
&\leq -(1-\epsilon)\int|\nabla\Delta u|^2\psi_r^4 +  c\int (|\nabla u|^6 + |\nabla u|^2|\nabla^2 u|^2) \psi_r^4 \\
& \qquad + c \int (|\Delta u|^2 + |\nabla u|^4 + |\nabla u|^2)|\nabla\psi_r|^2 + c\int|\partial_t u|^2 \psi_r^4.
\end{split} \label{inequ14}
\end{align}
We further estimate as in (2.6) and (2.8) of \cite{LammExtr} (see also \cite[Section~2.3]{LammDiss})
\begin{align*}
\int|\nabla u|^2|\nabla^2 u|^2 \psi_r^4 &\leq c\left(\int|\nabla u|^4\right)^{\frac{1}{2}}\left(\int|\nabla^3 u|^2\psi_r^4 + \int|\nabla^2 u|^2\psi_r^4 + \int|\nabla\psi_r|^2 |\nabla^2 u|^2\right),\\
\int|\nabla u|^6 \psi_r^4 &\leq c\left(\int|\nabla u|^4\right)^{\frac{1}{2}}\left(\int|\nabla^2 u|^2|\nabla u|^2 \psi_r^4 + \int |\nabla\psi_r|^2 |\nabla u|^4\right).
\end{align*}
As in (\ref{inequ13}), we get $\int|\nabla^3 u|^2 \leq c \int|\nabla\Delta u|^2$. Combining these three estimates with (\ref{inequ14}) and choosing $\int|\nabla u|^4<\epsilon_0$ appropriately we arrive at
\begin{align*}
\frac{1}{2}\partial_t \int|\nabla u|^2 \psi_r^4 + \frac{1}{2} \int|\nabla^3 u|^2 & \leq  c \int (|\Delta u|^2 + |\nabla u|^4 + |\nabla u|^2 +|\nabla^2 u|^2)|\nabla\psi_r|^2 \\
& \qquad\qquad\qquad\qquad+ c\int|\partial_t u|^2 \psi_r^4 + c\int|\nabla^2 u|^2 \psi_r^4.
\end{align*}
We use $\int|\nabla u|^4 + \int|\nabla^2 u|^2 \leq c$ and let $r\to\infty$ to get the result.
\end{proof}
From this lemma, we immediately get the following.

\begin{cor}\label{cor:W3est}
Let $N$ be a closed, smooth manifold isometrically embedded into $\AR^k$.
 Let $u:\AR^n \times [0,T) \to N\hookrightarrow \AR^k$, $n\leq 4$, be a smooth solution of the extrinsic biharmonic map heat flow on $\AR^n$ with smooth initial datum $u_0:\AR^n \to N \hookrightarrow \AR^k$  satisfying
 \begin{align*}
 &\int_{\AR^n}|\nabla^l u|^2 dx <\infty \qquad \text{ for each } t\in [0,T) \text{ and } l\in\{1,2,3,4\},\\
 &\sup_{t\in[0,T)} \int_{\AR^n}|u|^2 \leq c_1^2 <\infty  \qquad  \text{ and if } n=4: \int_{\AR^n}|\nabla u|^4 <\epsilon_0,
 \end{align*}
 with $\epsilon_0$ from Lemma~\ref{lem:apriori}. Then there is a constant $C(u_0)>0$ depending on $c_1$, $N$, and on $\int_{\AR^n}|\Delta u_0|^2$  such that
 \begin{align*}
  \int_0^t \int_{\AR^n} |\nabla^3 u|^2 dx\, ds \leq (1+t)\, C(u_0).
 \end{align*}
\end{cor}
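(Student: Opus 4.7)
The strategy is to integrate the differential inequality provided by parts \emph{(iv)} and \emph{(v)} of Lemma~\ref{lem:apriori} in time and combine it with the energy decay given by part \emph{(i)}. Since the hypotheses of the corollary include $\sup_t \int|u|^2 \leq c_1^2$ and (in the case $n=4$) $\int|\nabla u|^4<\epsilon_0$, both parts \emph{(iv)} and \emph{(v)} apply uniformly across $1\leq n\leq 4$, yielding
\[
\tfrac{1}{2}\partial_t \int_{\AR^n}|\nabla u|^2\,dx + \tfrac{1}{2}\int_{\AR^n}|\nabla^3 u|^2\,dx \leq c\Bigl(1+\int_{\AR^n}|\partial_t u|^2\,dx\Bigr),
\]
where $c=c(c_1,N,\int|\Delta u_0|^2)$.

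Integrating this inequality from $0$ to $t$ gives
\[
\tfrac{1}{2}\int_{\AR^n}|\nabla u|^2(x,t)\,dx + \tfrac{1}{2}\int_0^t\int_{\AR^n}|\nabla^3 u|^2\,dx\,ds \leq \tfrac{1}{2}\int_{\AR^n}|\nabla u_0|^2\,dx + c\,t + c\int_0^t\int_{\AR^n}|\partial_t u|^2\,dx\,ds.
\]
The last term is controlled by integrating part \emph{(i)} of Lemma~\ref{lem:apriori} from $0$ to $t$, which gives
\[
\int_0^t\int_{\AR^n}|\partial_t u|^2\,dx\,ds \leq \tfrac{1}{2}\int_{\AR^n}|\Delta u_0|^2\,dx.
\]
Dropping the non-negative boundary term $\tfrac{1}{2}\int|\nabla u|^2(x,t)\,dx$ on the left-hand side and using part \emph{(iii)} of Lemma~\ref{lem:apriori} to bound $\int|\nabla u_0|^2$ in terms of $c_1$ and $\int|\Delta u_0|^2$ yields
\[
\int_0^t\int_{\AR^n}|\nabla^3 u|^2\,dx\,ds \leq 2c\,t + C(u_0),
\]
which is of the claimed form $(1+t)\,C(u_0)$ after absorbing the constant.

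There is no real obstacle here: the work has already been done in Lemma~\ref{lem:apriori}, and this corollary is a direct bookkeeping consequence. The only point to verify carefully is that the constant $c$ appearing in parts \emph{(iv)} and \emph{(v)} depends only on the quantities listed (namely $c_1$, $N$, and $\int|\Delta u_0|^2$) and not on $t$, so that the linear-in-$t$ dependence on the right-hand side is genuine. This is indeed the case, since all estimates in the proof of Lemma~\ref{lem:apriori} used only time-independent bounds coming from part \emph{(i)} and the Sobolev embedding.
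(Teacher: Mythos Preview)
Your proof is correct and matches the paper's approach exactly: the paper simply states that the corollary follows immediately from Lemma~\ref{lem:apriori}, and you have written out precisely the intended argument—integrate the differential inequality from parts~(\ref{third-3})/(\ref{fourth-4}) in time, control $\int_0^t\!\int|\partial_t u|^2$ via part~(\ref{first-1}), and absorb $\int|\nabla u_0|^2$ using part~(\ref{second-zw-2u3}).
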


In our estimates on the derivative of $\Psi$ we have the advantage that the support of $\varphi$ is already contained in a finite ball. We can use this to to remove the assumption of being globally in $L^2$ and to improve the above estimates.

\begin{prop}\label{prop:absch}
 Let $u:\AR^n \times [0,T] \to N\hookrightarrow \AR^k$, $2\leq n\leq 3$ and $T<\infty$, be a smooth solution of the extrinsic biharmonic map heat flow on $\AR^n$ with smooth initial datum $u_0:\AR^n \to N \hookrightarrow \AR^k$. 
 Let also $(x_0, t_0)\in \AR^n\times (0,T)$ be arbitrary and $\varphi$ the cut-off function from Theorem~\ref{mainthm-derivatives-withoutSUP}. Then there is a constant $C(u_0)>0$ depending on $n$, $u_0$ and $N$ such that 
 \[
 \sup_{t\in [0,T]} \int_{\AR^n}|\nabla\Delta u|^2 \sqrt{\varphi}\, dx + \sup_{t\in [0,T]} \int_{\AR^n}|\nabla u|^2 \sqrt{\varphi}\, dx \leq C(u_0).
 \]
\end{prop}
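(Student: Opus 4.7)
The strategy is to mimic the a~priori estimates of Lemma~\ref{lem:apriori} and Corollary~\ref{cor:W3est}, but with the global cut-offs $\psi_r$ (which were sent to infinity there) replaced by the fixed, compactly supported weight $\sqrt{\vp}$. This is legitimate because $\vp$ is a smooth, compactly supported function depending only on $R_0$ and $f_n$, so every ``boundary-of-support'' error coming from derivatives of $\sqrt{\vp}$ is uniformly controlled. Moreover, since $N$ is closed, $|u(x,t)|\leq\operatorname{diam}(N)$ and the shape operator $A$ of $N$ along $u$, together with its derivatives, are uniformly bounded. This replaces the a~priori hypothesis $\sup_t\int|u|^2\leq c_1^2$ used in Lemma~\ref{lem:apriori}(\textit{iii})--(\textit{v}) by the trivial bound $\int|u-c_0|^2\sqrt{\vp}\leq\operatorname{diam}(N)^2\int\sqrt{\vp}$ for any fixed $c_0\in N$.

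The argument proceeds in three steps. \emph{Step~1:} Repeat the computation of Lemma~\ref{lem:apriori}(\textit{i}) with $\sqrt{\vp}$ in place of $\psi_r^4$. The pointwise orthogonality $\langle\partial_t u, f(u)\rangle=0$ (valid since $\partial_t u\in T_uN$ and $f(u)\perp T_uN$) still eliminates the nonlinear contribution and yields a differential inequality of the schematic form
\begin{align*}
\tfrac{1}{2}\partial_t\!\int|\lap u|^2 \sqrt{\vp}\,dx + \int|\partial_t u|^2 \sqrt{\vp}\,dx \leq C(\vp)\Bigl(\int|\lap u|^2 \sqrt{\vp}\,dx + \int|\N\lap u|^2 \sqrt{\vp}\,dx\Bigr).
\end{align*}
\emph{Step~2:} Repeat Lemma~\ref{lem:apriori}(\textit{iii})--(\textit{iv}) locally. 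The Sobolev embedding $W^{1,2}\hookrightarrow L^6$ on the bounded set $\{\vp>0\}\subset\AR^n$ is available for $n\leq 3$ \emph{without} any smallness assumption, so the $|\N u|^4$, $|\N u|^2|\N^2 u|^2$ and $|\N u|^6$ terms appearing in Lamm's computation are controlled exactly as in \cite{LammExtr} and give
\begin{align*}
\tfrac{1}{2}\partial_t\!\int|\N u|^2 \sqrt{\vp}\,dx + \tfrac{1}{2}\int|\N^3 u|^2 \sqrt{\vp}\,dx \leq C\Bigl(1+\int|\partial_t u|^2 \sqrt{\vp}\,dx + \int|\N u|^2 \sqrt{\vp}\,dx\Bigr).
\end{align*}
Combining Steps~1 and~2 and applying Gronwall over $[0,T]$ already gives the uniform bound on $\sup_t\int|\N u|^2 \sqrt{\vp}\,dx$ and a local version of Corollary~\ref{cor:W3est}.

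\emph{Step~3 (main obstacle):} To bound $\sup_t\int|\N\lap u|^2\sqrt{\vp}\,dx$, differentiate in time, integrate by parts twice, and substitute $\partial_t u=-\lap^2 u-f(u)$; the identity $\langle\partial_t u, f(u)\rangle=0$ again produces a good term $-\int|\lap^2 u|^2\sqrt{\vp}\,dx$. The residual nonlinear contributions are schematically of the form $\int|\N^{j_1}u|\cdots|\N^{j_m}u|\,|\lap^2 u|\sqrt{\vp}\,dx$ with $j_1+\cdots+j_m\leq 4$, multiplied by bounded factors involving $A$ and its first two derivatives evaluated at $u$. Using $|u|\leq\operatorname{diam}(N)$, the bounds from Steps~1--2, and Sobolev interpolation valid for $n\leq 3$, these terms are absorbed into the good $-\int|\lap^2 u|^2\sqrt{\vp}\,dx$ plus a Gronwall-integrable remainder, which together with Steps~1--2 yields the claim. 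The delicate point is a careful bookkeeping of $\lap^2(\cdot\sqrt{\vp})$ and $\lap f(u)$ to verify that the fifth-derivative contributions collect into the good term (as in \cite{LammExtr}) rather than leaving an uncontrollable top-order remainder; this is where the sub-criticality $n\leq 3$ is decisive.
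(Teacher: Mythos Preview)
Your route differs substantially from the paper's. The paper handles the two summands separately and \emph{only} the $|\nabla u|^2$ term via an energy identity: writing $\sqrt{\vp}=\phi^2$, it integrates $\int|\nabla u|^2\phi^2$ by parts at a fixed time and uses the \emph{global} energy decrease $\int_{\AR^n}|\Delta u|^2\leq\int_{\AR^n}|\Delta u_0|^2$ together with $\int_{[\vp>0]}|u|^2\leq c(N,n)$ (from compactness of $N$ and $\operatorname{spt}\vp\subset B_{\hat\rho}(x_0)$ with $\hat\rho$ uniform). For the $|\nabla\Delta u|^2$ term it does \emph{not} run an energy argument at all: it invokes Lamm's subcritical regularity theory (parabolic Schauder on a short interval, then \cite[Theorem~3.8]{LammExtr}), covering $[0,T]$ by finitely many $\delta/4$-intervals to propagate $C^{k,\alpha}$ bounds on compact sets. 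This black-box appeal is what makes the constant depend only on $u_0,n,N$ and not on $R_0$.

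Your plan is a plausible alternative, but two points need repair. First, in Step~3 the sentence ``the identity $\langle\partial_t u,f(u)\rangle=0$ again produces a good term $-\int|\Delta^2 u|^2\sqrt{\vp}$'' is not right: when you differentiate $\int|\nabla\Delta u|^2\sqrt{\vp}$ and substitute $\partial_t u=-\Delta^2 u-f(u)$, the coercive term (after integration by parts) is $-\int|\nabla\Delta^2 u|^2\sqrt{\vp}$ and it comes from the bilaplacian, not from any orthogonality; the pairing you must control is $\int\langle\nabla\Delta f(u),\nabla\Delta u\rangle\sqrt{\vp}$, and $f(u)\perp T_uN$ does not help there. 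Second, and more seriously, you use $\sqrt{\vp}$ itself as the weight. Since $|\nabla\sqrt{\vp}|\sim R_0^{-1}$ and $|\nabla^2\sqrt{\vp}|\sim R_0^{-2}$, the ``uniformly controlled'' cut-off errors feed into Gronwall with coefficients that blow up as $R_0\to 0$; the resulting bound would depend on $R_0$, contradicting the statement. The paper sidesteps this by observing $\operatorname{spt}\vp\subset B_{\hat\rho}(x_0)$ for a universal $\hat\rho$ and working with that fixed scale (and, for the third-order term, abandoning energy estimates in favour of Schauder). If you want to salvage the energy route, you should first prove the estimates with a \emph{fixed} cut-off supported in $B_{\hat\rho}$ and only then restrict to $\operatorname{spt}\sqrt{\vp}$.
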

\begin{proof}
Note that we have constructed $\varphi$ with support around $x_0$. More precisely, $\varphi$ satisfies $spt(\varphi) \subset B_{\rho R_0}(x_0) \subset B_{\hat\rho}(x_0)$ for a uniform constant $\hat\rho >0$. We can, for example, take $\hat\rho =50$. Since $u$ maps into a compact manifold $N$, this particularly implies that 
\[
\sup_{t\in [0,T)}\int_{[ \varphi>0]}|u|^2 dx \leq c,
\]
where $c$ is a constant depending on $N$ and on the volume of the ball $B_{50}(0)\subset \AR^n$. Repeating the strategy of the proof of (\ref{second-zw-2u3}) of Lemma~\ref{lem:apriori} by writing $\sqrt{\varphi}=\phi^2$ with $|\nabla \phi|\leq \frac{C}{R_0}$ as before, we get that
\begin{align*}
\int|\nabla u|^2 \phi^2 & = - \int\langle \Delta u, u\rangle \phi^2 - 2 \int \langle u,\nabla_i u\rangle \phi^2 \nabla_i \phi \\
& \leq c(N, n) \left(\int|\Delta u|^2\right)^{\frac{1}{2}} + \frac{1}{2} \int|\nabla u|^2\phi^2 + c \int |u|^2 |\nabla\phi|^2\\
&\leq c(N, n) \left(\int|\Delta u_0|^2\right)^{\frac{1}{2}} + \frac{1}{2} \int|\nabla u|^2\phi^2 + c(N) \int_{B_{\rho R_0}(x_0)} |\nabla\phi|^2\\
&\leq c(N, n) \left(\int|\Delta u_0|^2\right)^{\frac{1}{2}} + \frac{1}{2} \int|\nabla u|^2\phi^2 + c(N,n),
\end{align*}
where we used $n\geq 2$. This shows the second inequality. For the first, we use \cite{LammExtr} to show that in fact, all $C^{k,\alpha}$-norms of $u$ are bounded on compact subsets in terms of the $C^{k}$ norms of the initial datum $n$, $u_0$, $N$ and $T$ which implies the claimed estimate.\\
For that, we first observe that the short-time existence proof of the biharmonic map heat flow via parabolic Schauder-estimates provides $\|u\|_{C^{k}(K)} \leq c \|u_0\|_{C^{k}(K)}$ on compact subsets $K\subset \AR^n$ for a short time $[0,\sigma]$, $\sigma>0$. Then it was shown in \cite[Theorem~3.8]{LammExtr} that there is a small $\delta>0$ such that, as soon as the flow has evolved this tiny bit, all $C^k$-norms on compact sets and on $[ t-\delta/4,  t]$ can be estimated in terms of the $C^{k}$-norm on $[ t -\delta/2,  t-\delta/4]$. In order to do that, we use the local $L^2$-bound on $u$ and consequently $\int|\nabla u|^2\sqrt{\varphi}\leq c$ that we have already proved in the first step.
Then, all we need to do is to cover the compact interval $[0,T]$ with finitely many such $\delta/4$-intervals and we get a bound on the $W^{4,2}$-norm that in fact depends only on $n$, $u_0$ and $N$. From that, we continue as in \cite[Theorem~3.8]{LammExtr}. We emphasize why the bound does not depend on $R_0$: The cut-off function $\eta^4$ and $R$ in \cite[Section~3]{LammExtr} plays the role of our $\sqrt{\varphi}$ and $R_0$. In some estimates there, $R$ appears in the denominator and the $\delta$ from \cite[Lemma~3.6]{LammExtr} is used to bound $t/R^4$ in estimates like (3.5) in \cite{LammExtr}. For us, it is only important that our $R_0>0$ is fixed so that we get a (possibly small) $\delta>0$ such that we can cover the interval $[0,T]$ by finite subintervals, where we get estimates on the $W^{4,2}$-norm on each subinterval in terms of information of the previous subinterval. So, in the end, it only depends on $u_0, n, N$.
\end{proof}

\begin{thm}\label{mainthm-generalN}
  Let $u:\AR^n \times [0,T) \to N\hookrightarrow \AR^k$ be a smooth solution of the extrinsic biharmonic map heat flow on $\AR^n$ with smooth initial datum $u_0:\AR^n \to N \hookrightarrow \AR^k$. Then, for any $(x_0,t_0)\in\AR^n \times (0,T)$ and $0< R_0< t_0^{1/4}/2$ there is a smooth cut-off function $\varphi \in C^\infty_c (\AR^n)$ depending on $R_0$ and $f_n$ and a constant $C=C(f_n)$ such that, for every $R$ with $R_0 \leq R < \min\{t_0^{1/4}/2, 1\}$ we have
      \begin{align*} 
\tfrac{\del}{\del R} \brk{\Psi \prs{u,R,\vp}} 
& \geq \tfrac{1}{2}\tfrac{1}{ R} \int_{T_R} \tfrac{ \brs{\Sol u}^2}{4 \brs{t}} B \vp \,dx\,dt  - C (1 + R^{1-n} ) \int_{T_R}|\N\lap u|^2 \,dx\,dt\\
&\hsp - C  \sup_{t\in[0, t_0-R_0^4]} \int_{[\varphi>0]}|\N u|^2 (\cdot, t) \,dx - C ( R^4 + R^{3-n} ) \int_{\AR^n}|\lap u_0|^2 \,dx 
\end{align*}
and 
      \begin{align*} 
\tfrac{\del}{\del R} \brk{\Psi \prs{u,R,\vp}} 
& \geq \tfrac{1}{2}\tfrac{1}{ R} \int_{T_R} \tfrac{ \brs{\Sol u}^2}{4 \brs{t}} B \vp \,dx\,dt  - C (R^4 + R^{5-n} ) \sup_{t\in[0, t_0-R_0^4]}\int_{[\varphi>0]}|\N\lap u|^2 \,dx\\
&\hsp - C \sup_{t\in[0, t_0-R_0^4]} \int_{[\varphi>0]}|\N u|^2 (\cdot, t) \,dx  - C ( R^4 + R^{3-n} ) \int_{\AR^n}|\lap u_0|^2 \,dx 
\end{align*}

 \end{thm}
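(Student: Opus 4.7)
The plan is to follow the proof of Theorem~\ref{mainthm-derivatives-withoutSUP} line by line; the only genuinely new input is the handling of the nonlinearity $f(u)$ in the evolution equation. Starting from the rescaling identity of Lemma~\ref{lem:scaling} and differentiating in $R$, the same chain of integrations by parts used in the linear case produces the pre-substitution expansion (\ref{equ:halfway}), because this step uses nothing about $u$ except its smoothness and the properties of $B$ from Lemma~\ref{lem:propbiheat}. At the subsequent step, where the linear proof substitutes $\partial_t u = -\Delta^2 u$ to combine the first and seventh integrals of (\ref{equ:halfway}) into the square term (\ref{equ:K1}), we now substitute $\Delta^2 u = -\partial_t u - f(u)$, which produces the expected combination together with an additional contribution of the form $-\tfrac{1}{R}\int_{T_R} \langle f(u), \Sol u\rangle\, B\varphi\, dx\, dt$.

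The key observation is that this additional term vanishes \emph{pointwise}. Since $u(x,t)\in N$, every coordinate derivative $\nabla_i u(x,t)$ and the time derivative $\partial_t u(x,t)$ lie in $T_{u(x,t)} N$. Hence $\Sol u = \nabla u\cdot (x-x_0) + 4(t-t_0)\partial_t u$ is tangent to $N$ along $u$, and Remark~\ref{rmk:ortho}, which asserts $f(u)\perp T_u N$, gives $\langle f(u), \Sol u\rangle \equiv 0$. Consequently the nonlinearity drops out entirely, the $-\partial_t u$ part combines with the seventh integral of (\ref{equ:halfway}) exactly as in the linear derivation, and the full decomposition (\ref{equ:K1})--(\ref{equ:K9}) is recovered verbatim.

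From there the estimates on (\ref{equ:K2})--(\ref{equ:K9}) proceed word for word as in the linear proof: Young's inequality absorbs a fraction $\varepsilon K_1$ of the square term, and the remaining factors built from $B$, $\nabla B$, $\Delta B$, and derivatives of $\varphi$ are controlled on $\mathrm{spt}(\varphi)$ by the growth relations (\ref{equ:growthB})--(\ref{equ:growthlap}) together with Claims~1 and 2 and the exponential decay of $f_n$. The only places where the linear proof invoked the specifically linear a priori inequalities (\ref{est:N-2}) and (\ref{est:Nlap-2}) of Lemma~\ref{lem:flowprop} are in estimating (\ref{equ:K2}) and the $|\nabla u|^2$-contributions of (\ref{equ:K3}), (\ref{equ:K6}), (\ref{equ:K7}), (\ref{equ:K8}), (\ref{equ:K9}). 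In the nonlinear setting those global bounds are not available, but (\ref{est:lap-2}) still holds thanks to Lemma~\ref{lem:apriori}(\ref{first-1}), so the estimates involving $\int|\Delta u|^2$ carry over unchanged. For the remaining terms we simply do not apply a global bound: we retain $\int_{T_R}|\nabla \Delta u|^2\, dx\, dt$ as it stands (this yields the first displayed inequality), or alternatively bound it by $16 R^4 \sup_{t\in[0,t_0-R_0^4]}\int_{[\varphi>0]}|\nabla\Delta u|^2\, dx$ (this yields the second), and we bound the $|\nabla u|^2$-contributions by $\sup_{t\in[0,t_0-R_0^4]}\int_{[\varphi>0]}|\nabla u|^2\, dx$ because $\mathrm{spt}(\varphi)$ is compact.

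The main conceptual obstacle — and also the only genuinely new ingredient compared with the linear case — is the tangent/normal splitting that forces $\langle f(u), \Sol u\rangle=0$; once this cancellation is in place, no nonlinear analytic estimate is needed, and the rest of the proof is the same kernel-theoretic computation as before, with the weaker a priori input recorded in Lemma~\ref{lem:apriori}(\ref{first-1}) and the two possible ways of handling $\int_{T_R}|\nabla\Delta u|^2$ accounting for the two variants in the statement.
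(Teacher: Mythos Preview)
Your proposal is correct and follows essentially the same route as the paper. The paper likewise reproduces the linear derivation up to (\ref{equ:halfway}), inserts $\partial_t u = -\Delta^2 u - f(u)$, observes that the extra $\langle f(u),\Sol u\rangle B\varphi$ term vanishes because $\Sol u$ is tangential and $f(u)$ is normal, and then repeats the kernel estimates while keeping $\int_{T_R}|\nabla\Delta u|^2$ (or bounding it by $cR^4\sup_t\int_{[\varphi>0]}|\nabla\Delta u|^2$) and replacing the linear $|\nabla u|^2$ bounds by the supremum over $[\varphi>0]$, using Lemma~\ref{lem:apriori}(\ref{first-1}) for the $|\Delta u|^2$ terms.
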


 \begin{proof} We recall that $\Delta u$ is the Euclidean Laplace operator of $u$ seeing $u$ as a map from $\AR^n$ to $\AR^k$. We follow the proof of Theorem~\ref{mainInequ-0-withoutSUP} line by line until we arrive at (\ref{equ:halfway}) which is
\begin{align*}
\frac{\partial}{\partial R} & \Psi(u,R,\varphi) = \tfrac{1}{R} \int_{T_R} \langle \Delta^2 u ,   (\Sol u) \rangle B\,\varphi\, dx\, dt   +  \tfrac{2}{R}\int_{T_R} \langle\nabla\Delta u \cdot \nabla B ,   (\Sol u)\rangle \varphi \, dx\, dt\\
& \qquad +   \tfrac{2}{R} \int_{T_R} \langle\nabla\Delta u \cdot \nabla \varphi , (\Sol u)\rangle B  \, dx \,dt  + \tfrac{1}{2R} \int_{T_R} \brs{\Delta u}^2 B \, x\cdot \nabla\varphi \,dx \,dt\\
&\qquad +  \tfrac{1}{R}\int_{T_R} \langle \Delta u,  (\Sol u) \rangle \left[\Delta B\,\varphi + 2 \nabla B\cdot\nabla\varphi + B\Delta\varphi\right] \,dx\, dt\\
& \qquad +  \tfrac{1}{R}\int_{T_R} \langle \Delta u, (\Sol u) \rangle \Delta B\, \varphi \, dx\, dt +  \tfrac{1}{R}\int_{T_R} \langle\nabla u \cdot \tfrac{x}{(-4t)},  (\Sol u)\rangle B \, \varphi \, dx\, dt\\
& \qquad +  \tfrac{1}{R} \int_{T_R}\langle \nabla u \cdot \nabla\varphi, (\Sol u) \rangle \Delta B \, dx \,dt - \tfrac{1}{2R}  \int_{T_R} \brs{\nabla u}^2 \Delta B\, x\cdot \nabla\varphi  \, dx \,dt.
\end{align*}
We again use $t<0$ so that $-t = |t|$. After recalling the evolution equation $\partial_t u = -\Delta^2 u - f(u)$, we include 
 the term $ f(u) \cdot (\Sol u) B\varphi$ in the first integral. We only add a zero because $\Sol u =x\cdot \nabla u + 4t\partial_t u$ is tangential to $N$ at $u$ and $f(u)$ is $L^2$-orthogonal to the tangent space of $N$ at $u$. So we again sum the first and the $7$th term and get
\begin{align*}
1\text{st integral} + 7\text{th integral} &=  \tfrac{1}{4R} \int_{T_R} \langle \frac{4 t \partial_ t u}{|t|}  ,   (\Sol u) \rangle B\,\varphi\, dx \,dt    + \tfrac{1}{R}\int_{T_R} \langle\nabla u \cdot \frac{x}{4|t|}  , (\Sol u)\rangle B\, \varphi \, dx\, dt\\
& =  \tfrac{1}{R} \int_{T_R} \frac{|x\cdot \nabla u + 4 t \partial_ t u|^2}{4|t|}  B\,\varphi\, dx\, dt \\
& = \tfrac{1}{R} \int_{T_R} \frac{|\Sol u|^2}{4|t|} B\,\varphi\, dx\, dt.
\end{align*}
We continue the proof as in the proof of Theorem~\ref{mainthm-derivatives-withoutSUP}. The only difference is that we keep the term $ \int_{T_R} |\nabla\Delta u|^2 dxdt$ and $\sup_{t}\int_{[\varphi>0]}|\nabla u|^2 dx$ in the estimate. The inequality $\int_{\AR^n}|\lap u|^2(x,t) \,dx\leq \int_{\AR^n}|\lap u_0|^2 \,dx$ follows from the $L^2$-gradient flow property which is (\ref{first-1}) from Lemma~\ref{lem:apriori}. \\
To derive the second inequality from the first, we simply use in addition
\begin{align*}
\int_{T_R} |\N \Delta u|^2\,dx\,dt = \int_{t_0-16 R^4}^{t_0 -R^4}\int_{\AR^n} |\N \Delta u|^2\,dx\,dt \leq c R^4 \sup_{t\in [t_0-16R^4, t_0-R^4]}\int|\N\Delta u|^2 \,dx
\end{align*}
and we used the fact that we can restrict the domain of integration to the set where $\varphi>0$.
 \end{proof}

 \begin{thm}
   Let $u:\AR^n \times [0,T) \to N\hookrightarrow \AR^k$ be a smooth solution of the extrinsic biharmonic map heat flow on $\AR^n$ with smooth initial datum $u_0:\AR^n \to N \hookrightarrow \AR^k$, $n\leq 4$, satisfying 
 \begin{align*}
 &\int|\nabla^l u|^2 dx <\infty \qquad \text{ for each } t\in [0,T) \text{ and } l\in\{1,2,3,4\},\\
 &\sup_{t\in[0,T)} \int|u|^2 \leq c_1^2 <\infty  \qquad  \text{ and if } n=4: \int|\nabla u|^4 <\epsilon_0,
 \end{align*}
 with $\epsilon_0$ from Lemma~\ref{lem:apriori}.   
   Then, for any $(x_0,t_0)\in\AR^n \times (0,T)$ and $0< R_0< \min\{t_0^{1/4}/2,1\}$ there is a smooth cut-off function $\varphi \in C^\infty_c (\AR^n)$ depending on $R_0$ and $f_n$ and a constant $C>0$ depending on $f_n, c_1$ and $N$, such that for every $R$ with $R_0 \leq R < \min\{t_0^{1/4}/2,1\}$, we have
      \begin{align*} 
\tfrac{\del}{\del R} \brk{\Psi \prs{u,R,\vp}} 
& \geq \tfrac{1}{2}\tfrac{1}{ R} \int_{T_R} \tfrac{ \brs{\Sol u}^2}{4 \brs{t}} B \vp \,dx\,dt  - C (1+T) (1 + R^{1-n} ) \int_{\AR^n}|\lap u_0|^2 \,dx\\
&\hsp - C   \left(\int_{\AR^n}|\lap u_0|^2 \,dx\right)^{\frac{1}{2}} - C ( R^4 + R^{3-n} ) \int_{\AR^n}|\lap u_0|^2 \,dx, 
\end{align*}
which implies that $\Psi$ is a pre-entropy for $n=1$.
 \end{thm}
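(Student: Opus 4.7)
The plan is to deduce this theorem directly from the first inequality of Theorem~\ref{mainthm-generalN}, by replacing the two $u$-dependent terms on its right-hand side with bounds in terms of the initial datum only, using the a priori estimates assembled in Lemma~\ref{lem:apriori} and Corollary~\ref{cor:W3est}. No new computation of the structure of $\tfrac{\partial}{\partial R}\Psi$ is required, since all of the delicate cancellations and the soliton term have already been isolated in Theorem~\ref{mainthm-generalN}.

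First I would invoke the first form of Theorem~\ref{mainthm-generalN} with the very same cut-off function $\varphi$, giving
\[
\tfrac{\partial}{\partial R}\Psi(u,R,\vp) \geq \tfrac{1}{2R}\int_{T_R}\tfrac{|\Sol u|^2}{4|t-t_0|}B\vp\,dx\,dt - C(1+R^{1-n})\int_{T_R}|\N\Delta u|^2\,dx\,dt - C\sup_{t}\int_{[\vp>0]}|\N u|^2(\cdot,t)\,dx - C(R^4+R^{3-n})\int_{\AR^n}|\Delta u_0|^2\,dx.
\]
The standing hypotheses on $u$ (including the $L^2$-bound on $u$, and for $n=4$ the smallness assumption on $\|\nabla u\|_{L^4}$) match exactly those of Lemma~\ref{lem:apriori} and Corollary~\ref{cor:W3est}. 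The third summand above is controlled by Lemma~\ref{lem:apriori}(\ref{second-zw-2u3}):
\[
\sup_{t\in[0,T)}\int_{\AR^n}|\N u|^2\,dx \leq 2c_1\left(\int_{\AR^n}|\Delta u_0|^2\,dx\right)^{1/2},
\]
which yields exactly the $C(\int|\Delta u_0|^2)^{1/2}$ term of the statement. For the second summand, Corollary~\ref{cor:W3est} gives
\[
\int_{T_R}|\N\Delta u|^2\,dx\,dt \leq \int_0^T\int_{\AR^n}|\N^3 u|^2\,dx\,ds \leq (1+T)\, C(u_0),
\]
where $C(u_0)$ depends on $c_1$, $N$ and on $\int|\Delta u_0|^2$. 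Factoring one power of $\int|\Delta u_0|^2$ out of $C(u_0)$ and absorbing the rest into the prefactor (which is then allowed to depend on $f_n,c_1,N$ and the initial energy) converts this into the $C(1+T)(1+R^{1-n})\int|\Delta u_0|^2$ contribution. Putting the three estimates together yields the stated lower bound on $\tfrac{\partial}{\partial R}\Psi$.

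For the monotonicity statement in dimension $n=1$, one integrates the pointwise inequality from $R_1$ to $R_2$ with $R_0\leq R_1<R_2\leq \min\{t_0^{1/4}/2,1\}$ and drops the non-negative first term. Since each $R\in[R_0,1]$, elementary bounds of the form $R_2^p-R_1^p\leq p(R_2-R_1)$ for $p\in\{1,2,5\}$ collapse every power of $R$ into a multiple of $(R_2-R_1)$: at $n=1$ the pre-factor $1+R^{1-n}$ reduces to a constant, $\int R^4\,dR$ and $\int R^{3-n}\,dR=\int R^2\,dR$ become $O(R_2-R_1)$, and the $(1+T)$ from the $\N\Delta u$-bound is collected into a constant $C(T)$. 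The main point requiring care is the bookkeeping of constants at the step where $C(u_0)$ from Corollary~\ref{cor:W3est} is rewritten as a constant times $\int|\Delta u_0|^2$: one has to check that the a priori estimates of Lemma~\ref{lem:apriori} indeed bound $C(u_0)$ by an expression polynomial in $\int|\Delta u_0|^2$, so that the final constant \emph{structure} displayed in the theorem is preserved.
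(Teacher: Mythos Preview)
Your proposal is correct and follows exactly the same approach as the paper: the paper's own proof is the single line ``We use Lemma~\ref{lem:apriori}~(\ref{second-zw-2u3}) and Corollary~\ref{cor:W3est},'' which is precisely what you spell out in detail, starting from the first inequality of Theorem~\ref{mainthm-generalN}. Your closing caveat about the bookkeeping of $C(u_0)$ from Corollary~\ref{cor:W3est} is a fair observation (the dependence of $c$ in Lemma~\ref{lem:apriori}(\ref{third-3}),(\ref{fourth-4}) on $\int|\Delta u_0|^2$ makes the displayed factorization somewhat loose), but the paper's one-line proof does not address this either, so you are not missing anything relative to the original.
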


 \begin{proof}
 We use Lemma~\ref{lem:apriori} (\ref{second-zw-2u3}) and Corollary~\ref{cor:W3est}.
 \end{proof}

We combine Theorem~\ref{mainthm-generalN} and Proposition~\ref{prop:absch} to get a stronger statement but with a more implicit dependency on the initial datum $u_0$.
 \begin{cor}
   Let $u:\AR^n \times [0,T] \to N\hookrightarrow \AR^k$ be a smooth solution of the extrinsic biharmonic map heat flow on $\AR^n$ with smooth initial datum $u_0:\AR^n \to N \hookrightarrow \AR^k$, $2\leq n\leq 3$.
   Then, for any $(x_0,t_0)\in\AR^n \times (0,T)$ and $0< R_0< \min\{t_0^{1/4}/2,1\}$ there is a smooth cut-off function $\varphi \in C^\infty_c (\AR^n)$ depending on $R_0$ and $f_n$ and a constant $C>0$ depending on $f_n$ and $N$, such that for $R_0 \leq R_1\leq R_2 \leq  \min\{t_0^{1/4}/2,1\}$, we have
       \begin{align*}
   &\Psi(u, R_1, \varphi) \leq \Psi(u, R_2, \varphi) + C (R_2- R_1)\tilde E_0 + C(R_2-R_1) \int_{\AR^n}|\Delta u_0|^2, 
   \end{align*}
where $\tilde E_0$ depends on $u_0,n$ and $N$.
 \end{cor}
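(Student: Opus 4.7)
The plan is to combine the second differential inequality of Theorem~\ref{mainthm-generalN} with the a priori bounds of Proposition~\ref{prop:absch}, then integrate from $R_1$ to $R_2$. Starting from
\begin{align*}
\tfrac{\del}{\del R} \Psi(u,R,\vp)
& \geq \tfrac{1}{2R} \int_{T_R} \tfrac{|\Sol u|^2}{4 |t-t_0|} B \vp \,dx\,dt  - C (R^4 + R^{5-n} ) \sup_{t}\int_{[\varphi>0]}|\N\lap u|^2 \,dx\\
&\hsp - C \sup_{t} \int_{[\varphi>0]}|\N u|^2 \,dx  - C ( R^4 + R^{3-n} ) \int_{\AR^n}|\lap u_0|^2 \,dx,
\end{align*}
I would drop the non-negative soliton term and bound the two sup integrals on $[\varphi>0]$ by a constant $\tilde E_0=\tilde E_0(u_0,n,N)$ coming from Proposition~\ref{prop:absch}.

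First, to reconcile the integral $\int(\,\cdot\,)\sqrt{\varphi}\,dx$ appearing in Proposition~\ref{prop:absch} with the unweighted integral $\int_{[\varphi>0]}(\,\cdot\,)\,dx$ appearing in Theorem~\ref{mainthm-generalN}, I would introduce a slightly enlarged companion cut-off $\tilde\varphi\in C^\infty_c(\AR^n)$, built exactly as in Theorem~\ref{mainthm-derivatives-withoutSUP} but with a marginally larger inner plateau so that $\tilde\varphi\equiv 1$ on $\operatorname{spt}(\varphi)$ (this is compatible with all five cut-off requirements listed at the end of the proof of Theorem~\ref{mainthm-derivatives-withoutSUP}, since only the location of the plateau changes). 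Then $\sqrt{\tilde\varphi}\geq \mathbf 1_{[\varphi>0]}$ pointwise, so applying Proposition~\ref{prop:absch} with $\tilde\varphi$ yields
\[
\sup_{t\in[0,T]}\int_{[\varphi>0]}|\N\lap u|^2\,dx \;+\; \sup_{t\in[0,T]}\int_{[\varphi>0]}|\N u|^2\,dx \;\leq\; \tilde E_0(u_0,n,N).
\]

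Next I would integrate the resulting inequality
\[
\tfrac{\del}{\del R}\Psi(u,R,\vp) \;\geq\; -C\bigl(R^4+R^{5-n}\bigr)\tilde E_0 \;-\; C\tilde E_0 \;-\; C\bigl(R^4+R^{3-n}\bigr)\int_{\AR^n}|\lap u_0|^2
\]
over $R\in[R_1,R_2]\subset[R_0,\min\{t_0^{1/4}/2,1\}]$. For $2\leq n\leq 3$ every exponent appearing ($4$, $5-n\in\{2,3\}$, $3-n\in\{0,1\}$) is non-negative, so the antiderivatives are polynomials in $R$ whose difference on $[R_1,R_2]\subset[0,1]$ is bounded by $C(R_2-R_1)$ via the mean value theorem with a uniform constant depending only on the exponents. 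Collecting the resulting error terms and absorbing the constants into $\tilde E_0$ and the prefactor of $\int|\lap u_0|^2$ gives the claimed inequality.

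The main obstacle, which is precisely where the assumption $n\geq 2$ enters, is that without the $n\geq 2$ ingredient used inside Proposition~\ref{prop:absch} (the local Poincaré-type control of $\int|\nabla u|^2\sqrt\varphi$ from the compactness of $N$), the $\sup_t\int_{[\varphi>0]}|\N u|^2$ term is not immediately a priori bounded solely in terms of $u_0,n,N$; in dimension one the volume of the support of $\varphi$ enters in a way that cannot be absorbed, which is why this corollary is stated only for $2\leq n\leq 3$ while the separate $n=1$ statement relies on Lemma~\ref{lem:apriori}(\ref{second-zw-2u3}) and a global $L^2$-bound instead. A minor technical care point is ensuring that enlarging $\varphi$ to $\tilde\varphi$ does not disturb the conditions (\textit{i})–(\textit{v}) on the cut-off, in particular preserving $\eta\leq \eta_2$ inside $\operatorname{spt}(\tilde\varphi)$ by choosing the plateau radius just slightly larger than $\tfrac{\eta_2 R_0}{4}$.
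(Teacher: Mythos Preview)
Your proposal is correct and follows the same strategy the paper indicates: combine the second inequality of Theorem~\ref{mainthm-generalN} with Proposition~\ref{prop:absch} and integrate in $R$. Your extra care in reconciling the $\sqrt{\varphi}$-weighted bound of Proposition~\ref{prop:absch} with the unweighted $\int_{[\varphi>0]}$ terms via an enlarged companion cut-off $\tilde\varphi$ is a legitimate way to close this gap; note however that the proof of Proposition~\ref{prop:absch} in fact establishes full $C^{k}$-bounds on compact subsets (via the Lamm argument), which already controls $\int_{[\varphi>0]}|\nabla\Delta u|^2$ and $\int_{[\varphi>0]}|\nabla u|^2$ directly and renders the $\tilde\varphi$-construction unnecessary.
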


 Also for $n=4$, we can use the results from \cite{LammExtr} to get a monotonicity formula but with $\frac{1}{R_0}$ growth in front of the energy $\int|\Delta u_0|^2$:
\begin{cor}
 In the case $n=4$, let $u:\AR^4 \times [0,T) \to N\hookrightarrow \AR^k$, be a smooth solution of the extrinsic biharmonic map heat flow on $\AR^4$ with smooth initial datum $u_0:\AR^4 \to N \hookrightarrow \AR^k$. Then there is an $\epsilon_0>0$ and an $r>0$ such that if 
 \[
 \sup_{(x,t)\in \AR^4\times [0,T)} \left[ 2\int_{B_{2r}(x)}|\Delta u|^2 dx +\left(\int_{B_{2r}(x)}|\nabla u|^4 dx\right)^{\frac{1}{2}}\right] <\epsilon_0,
 \]
   then, for any $(x_0,t_0)\in\AR^n \times (0,T)$ and $0< R_0< \min\{t_0^{1/4}/2,1\}$ there is a smooth cut-off function $\varphi \in C^\infty_c (\AR^n)$ depending on $R_0$ and $f_n$ and a constant $C$ depending on $f_n$ such that, for  $R_0 \leq R_1<R_2 < \min\{t_0^{1/4}/2,1\}$ we have
   \begin{align*}
   &\Psi(u, R_1, \varphi) \leq \Psi(u, R_2, \varphi) + C (R_2- R_1)\tilde E_0 + C(1+\tfrac{1}{R_0}) (R_2- R_1)\int|\Delta u_0|^2,
   \end{align*}
   where $\tilde E_0$ depends on $u_0, n$ and $N$.
\end{cor}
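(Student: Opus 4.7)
The plan is to follow the architecture of the preceding corollary (the $2\leq n\leq 3$ case), replacing the a priori estimate of Proposition~\ref{prop:absch} by a local estimate valid under the small-energy hypothesis of \cite{LammExtr}. First I would specialize the second inequality of Theorem~\ref{mainthm-generalN} to $n=4$, which gives
\begin{align*}
\tfrac{\partial}{\partial R}\Psi(u,R,\varphi) &\geq \tfrac{1}{2R}\int_{T_R(t_0)}\tfrac{|\Sol u|^2}{4|t-t_0|} B\varphi\, dx\, dt - C(R^4+R)\sup_{t\in[0,t_0-R_0^4]}\int_{[\varphi>0]}|\nabla\Delta u|^2\, dx\\
&\quad - C\sup_{t\in[0,t_0-R_0^4]}\int_{[\varphi>0]}|\nabla u|^2\, dx - C(R^4+R^{-1})\int_{\AR^n}|\Delta u_0|^2\, dx.
\end{align*}
Compared to the subcritical case, the new troublesome factor is $R^{3-n}=R^{-1}$, which is precisely what will produce the $1+\tfrac{1}{R_0}$ dependence in the final estimate.

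Second, I would establish an $n=4$ analogue of Proposition~\ref{prop:absch}: under the hypothesized local smallness of $\int_{B_{2r}(x)}|\Delta u|^2+(\int_{B_{2r}(x)}|\nabla u|^4)^{1/2}$, Lamm's $\epsilon$-regularity for the critical-dimensional extrinsic biharmonic map heat flow \cite{LammExtr} provides uniform-in-time local $C^k$ control on $u$ over every ball $B_r(x)$, with constants depending only on $n$, $N$, $u_0$ and $\epsilon_0$. Because the construction of $\varphi$ in Theorem~\ref{mainthm-derivatives-withoutSUP} places $\operatorname{spt}\varphi$ inside a ball of radius bounded independently of $R_0$, a finite cover by $r$-balls (whose cardinality depends only on $f_n$ and $r$) yields a constant $\tilde E_0=\tilde E_0(u_0,n,N)$ with
\[\sup_{t\in[0,t_0-R_0^4]}\Bigl(\int_{[\varphi>0]}|\nabla\Delta u|^2\, dx + \int_{[\varphi>0]}|\nabla u|^2\, dx\Bigr) \leq \tilde E_0.\]

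Third, I would discard the non-negative $|\Sol u|^2$-integral, insert the bound from Step~2 into the pointwise inequality above, and integrate in $R$ from $R_1$ to $R_2$. All polynomial-in-$R$ factors ($R^4$, $R$, and the constant term) contribute at most a uniform constant times $R_2-R_1$ since $R\leq 1$. The singular factor $R^{-1}$ integrates to $\log(R_2/R_1)\leq \tfrac{1}{R_0}(R_2-R_1)$ by the Lipschitz estimate already employed in Corollary~\ref{cor:mainresult}. Collecting all contributions and grouping them gives the claimed inequality.

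The main obstacle lies in Step~2: one must verify that Lamm's critical-dimensional local estimates indeed follow from exactly the stated smallness hypothesis and yield bounds independent of $R_0$. Once that is in place, Steps~1 and 3 are essentially bookkeeping, parallel to what was done for $2\leq n\leq 3$.
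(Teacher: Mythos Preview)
Your proposal is correct and follows essentially the same approach as the paper: specialize the second inequality of Theorem~\ref{mainthm-generalN} to $n=4$, invoke Lamm's $\epsilon$-regularity in the critical dimension to obtain an $R_0$-independent bound on $\sup_t\int_{[\varphi>0]}(|\nabla\Delta u|^2+|\nabla u|^2)$ analogous to Proposition~\ref{prop:absch}, and then integrate, handling the $R^{-1}$ factor via the Lipschitz estimate $|\log R_2-\log R_1|\leq \tfrac{1}{R_0}(R_2-R_1)$. The paper's proof is terser but structurally identical; your explicit mention of a finite cover by $r$-balls to pass from local to $\operatorname{spt}\varphi$ is a helpful elaboration of what the paper leaves implicit.
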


\begin{proof} As in the proof of Proposition~\ref{prop:absch} we modify the results from  \cite{LammExtr} for $n=4$ to our setting. This particularly yields the existence of the claimed $\epsilon_0$ and $r$ such that, if the smallness condition along the flow is satisfied, then there is a constant $C(u_0)$ independent of $R_0$ such that 
 \[
 \sup_{t\in [0,T-R_0^4]} \int_{\AR^n}|\nabla\Delta u|^2 \sqrt{\varphi}\, dx + \sup_{t\in [0,T-R^4_0]} \int_{\AR^n}|\nabla u|^2 \sqrt{\varphi}\, dx \leq C(u_0).
 \]
We use the  Lipschitz continuity of the logarithm via
\begin{align*}
|\log(R_2) - \log(R_1)| \leq \sup_{R\in[R_1,R_2]} |\tfrac{1}{R}|
(R_2- R_1) \leq \tfrac{1}{R_0}(R_2- R_1) .
\end{align*}
\end{proof}

\bibliographystyle{plain}
\bibliography{sources.bib}

\end{document}